\theoremstyle{plain} 
\newtheorem{theorem}{Theorem}[section]
\newtheorem{lemma}[theorem]{Lemma}
\newtheorem{corollary}[theorem]{Corollary}
\newtheorem{proposition}[theorem]{Proposition}
\newtheorem*{definition}{Definition}
\newtheorem*{remark}{Remark}
\tikzset{
  filled vertex/.style  = {circle,draw=blue,fill=black!50,inner sep=1pt},
  empty vertex/.style  = {circle, draw, fill = white, inner sep=1.5pt, minimum width=1.5pt},
  corner/.style  = {fill=gray!30, draw= gray, thick, inner sep=2pt},
  b-vertex/.style = {fill=RubineRed, diamond, draw=blue, inner sep=1.5pt},
  a-vertex/.style = {draw=blue, fill=cyan, inner sep=2pt},
  uncertain edge/.style = {decoration={dashsoliddouble}, decorate},
  witnessed edge/.style = {ultra thick, teal}
}
\tikzset{shifted path/.style args={from #1 to #2}{insert path={
let \p1=($(#1.east)-(#1.center)$),
\p2=($(#2.east)-(#2.center)$),\p3=($(#1.center)-(#2.center)$),
\n1={.75/veclen(\x1,\y1)},\n2={.75/veclen(\x2,\y2)},\n3={atan2(\y3,\x3)} in
(#1.{\n3+180+asin(\n1)}) to (#2.{\n3-asin(\n2)})
}}}
\newcommand{\uncertain}[2]
{
  \draw[shifted path=from #1 to #2];
  \draw[densely dotted, thick, shifted path=from #2 to #1];
}
\newcommand{\lp}{\ensuremath{\operatorname{lp}}}
\newcommand{\rp}{\ensuremath{\operatorname{rp}}}
\title{Characterization of Circular-arc Graphs: III. Chordal Graphs}
\author{
  Yixin Cao\thanks{Department of Computing, Hong Kong Polytechnic University, Hong Kong, China.  \texttt{yixin.cao@polyu.edu.hk}.
    The author gratefully acknowledges the support of the K. C. Wong Education Foundation.
  } 
  \and Tomasz Krawczyk\thanks{Faculty of Mathematics and Information Science, Warsaw University of Technology, Poland. \\\texttt{tomasz.krawczyk@pw.edu.pl}.}
}
\begin{document}
\maketitle
\begin{abstract}
  We identify all minimal chordal graphs that are not circular-arc graphs, thereby resolving one of ``the main open problems'' concerning the structures of circular-arc graphs as posed by Dur{\'{a}}n, Grippo, and Safe in 2011.
  The problem had been attempted even earlier, and previous efforts have yielded partial results, particularly for claw-free chordal graphs and chordal graphs with an independence number of at most four.
  The answers turn out to have very simple structures: except for eight small graphs, with at most ten vertices, all the others have a unified description.  
  Our findings are based on a structural study of McConnell's flipping, which transforms circular-arc graphs into interval graphs with certain representation patterns.

\end{abstract}

\section{Introduction}\label{sec:intro}

All graphs discussed in this paper are finite and simple.  The vertex set and edge set of graph~$G$ are denoted by, respectively,~$V(G)$ and~$E(G)$.
For a subset~$U\subseteq V(G)$, we denote by~$G[U]$ the subgraph of~$G$ induced by~$U$, and by~$G - U$ the subgraph~$G[V(G)\setminus U]$, which is shortened to~$G - v$ when~$U = \{v\}$.
The \emph{neighborhood} of a vertex~$v$, denoted by~$N_{G}(v)$, comprises vertices adjacent to~$v$, i.e.,~$N_{G}(v) = \{ u \mid uv \in E(G) \}$, and the \emph{closed neighborhood} of~$v$ is~$N_{G}[v] = N_{G}(v) \cup \{ v \}$.
We may drop the subscript if the graph is clear from the context.
A \emph{clique} is a set of pairwise adjacent vertices, and an \emph{independent set} is a set of vertices that are pairwise nonadjacent. 
For~$\ell \ge 4$, we use $C_\ell$ to denote a simple cycle on~$\ell$ vertices; they are called \emph{holes}.
A graph is~\emph{$H$-free} if it does not contain~$H$ as an induced subgraph.

\begin{figure}[ht!]
  \centering\small
  \begin{subfigure}[b]{.23\linewidth}
    \centering
    \begin{tikzpicture}[scale=.6]\small
      \foreach \i in {1, 2, 3} 
      \draw ({120*\i-90}:1) -- ({120*\i+30}:1) -- ({120*\i-30}:2) -- ({120*\i-90}:1);
      \foreach[count =\i from 0] \n in {1, 3, 5} {
        \node[empty vertex] at ({90 - 120*\i}:2) {};
        \node at ({90 - 120*\i}:2.5) {$\n$};
        \pgfmathsetmacro{\x}{int(\n+1)}
        \node[filled vertex] at ({30 - 120*\i}:1) {};
        \node at ({30 - 120*\i}:1.4) {$\x$};
      }
    \end{tikzpicture}
    \caption{}
  \end{subfigure}  
  \;
  \begin{subfigure}[b]{.3\linewidth}
    \centering
    \begin{tikzpicture}[scale=.1]
      \begin{scope}[every path/.style={{|[left]}-{|[right]}}]        
        \draw[Sepia,thick] (120:13) arc (120.:-120:13); \draw (200:13) arc (200:160:13);
        \draw[Sepia,thick] (260:12) arc (260:-10:12); \draw (-40:12) arc (-40.:-80:12);
        \draw[Sepia,thick] (370:11) arc (370:100:11); \draw (80:11) arc (80:40:11);
      \end{scope}
      \draw[dashed,thin] (10,0) arc (0:360:10);
      \foreach[count=\j] \i/\c in {40/, -120/Sepia, -80/, 100/Sepia, 160/, -10/Sepia} {
        \draw[dashed] (\i:11) -- (\i:10);
        \node[\c] at (\i:8) {${\j}$};
      }
    \end{tikzpicture}
    \caption{}
  \end{subfigure}  
  \;
  \begin{subfigure}[b]{.3\linewidth}
    \centering
    \begin{tikzpicture}[scale=.1]
      \begin{scope}
        \foreach[count=\j from 0] \p/\q in {6/3, 2/5, 4/1} {
          \pgfmathsetmacro{\radius}{11+\j}
          \pgfmathsetmacro{\span}{180}
          \pgfmathsetmacro{\start}{240 - 120*\j}
          \draw[{|[left]}-{|[right]}, thick, Sepia]  (\start:\radius) arc (\start:{\start-\span}:\radius); 
          \draw[Sepia, dashed] ({\start}:\radius) -- ({\start}:10);
          \node[Sepia] at ({\start}:8) {${\p}$};

          \pgfmathsetmacro{\span}{20}
          \pgfmathsetmacro{\start}{340 - 120*\j}
          \draw[{|[left]}-{|[right]}]  (\start:\radius) arc (\start:{\start-\span}:\radius);
          \draw[dashed] ({\start}:\radius) -- ({\start}:10);
          \node at ({\start}:8) {${\q}$};
        }
      \end{scope}
      \draw[dashed,thin] (10,0) arc (0:360:10);
    \end{tikzpicture}
    \caption{}
  \end{subfigure}  
  \caption{A circular-arc graph and its two circular-arc models.  In (b), any two arcs for vertices~$\{2, 4, 6\}$ cover the circle; in (c), the three arcs for vertices~$\{2, 4, 6\}$ do not share any common point.}
  \label{fig:normal-and-helly}
\end{figure}

A graph is a \emph{circular-arc graph} if its vertices can be assigned to arcs on a circle such that two vertices are adjacent if and only if their corresponding arcs intersect.  Such a set of arcs is called a \emph{circular-arc model} for this graph (Figure~\ref{fig:normal-and-helly}).
If we replace the circle with the real line and arcs with intervals, we end up with interval graphs.
All interval graphs are circular-arc graphs.
Both graph classes are by definition \emph{hereditary}, i.e., closed under taking induced subgraphs.
While both classes have been intensively studied, there is a huge gap between our understanding of them.
One fundamental combinatorial problem on a hereditary graph class is its characterization by \emph{forbidden induced subgraphs}, i.e., minimal graphs that are not in the class.
For example, the {minimal forbidden induced subgraphs} of interval graphs are holes and those in Figure~\ref{fig:non-interval}~\cite{lekkerkerker-62-interval-graphs}.

\begin{figure}[ht]
  \tikzstyle{every node}=[empty vertex]
  \centering \small
  \begin{subfigure}[b]{0.22\linewidth}
    \centering
    \begin{tikzpicture}[xscale=.6, yscale=.6]
      \node (c) at (0, 0) {};
      \foreach[count=\i] \p in {below, right, below} {
        \node (u\i) at ({90*(3-\i)}:2) {};
        \node (v\i) at ({90*(3-\i)}:1) {};
        \draw (u\i) -- (v\i) -- (c);
      }
    \end{tikzpicture}
    \caption{long claw}\label{fig:long-claw-unlabeled}
  \end{subfigure}
  \,
  \begin{subfigure}[b]{0.22\linewidth}
    \centering
    \begin{tikzpicture}[xscale=.6, yscale=.7]
      \node (v7) at (0, -1) {};
      \draw (-2, 0) -- (2, 0);
      \foreach[count=\i from 2] \v/\p in {x_{1}/above, v_{1}/above, v_{2}/above right, v_{3}/above, x_{3}/above} {
        \node (v\i) at ({\i - 4}, 0) {};
        \draw (v\i) -- (v7);
      }
      \node (v1) at (0, .75) {};
      \draw (v4) -- (v1);      
    \end{tikzpicture}
    \caption{whipping top}\label{fig:whipping-top}
  \end{subfigure}
  \,
  \begin{subfigure}[b]{0.22\linewidth}
    \centering
    \begin{tikzpicture}[scale=.8]
      \draw (-1.5, 0) -- (0, 0) edge[dashed] (1.5, 0);
\draw (0, 1.75) node {} -- (0, 1) node (c) {} -- (0, 0) node {};
      \foreach[count =\i] \y in {1, 3} {
        \node (u\i) at ({3 * \i - 4.5}, 0) {};
        \node (v\i) at ({2 * \i - 3}, 0) {};
        \draw (u\i) -- (v\i) -- (c);
      }
    \end{tikzpicture}
    \caption{\dag{}}\label{fig:dag} 
  \end{subfigure}
  \,
  \begin{subfigure}[b]{0.22\linewidth}
    \centering
    \begin{tikzpicture}[scale=.8]
      \draw[dashed] (-1., 0) -- (1., 0);
      \node (v3) at (0, 0) {};
      \foreach[count =\i, evaluate={\x=int(2*\i - 3);}] \a/\b in {1/1, p/3} {
        \node (u\i) at ({1.5 * \x}, 0) {};
        \node (v\i) at ({1. * \x}, 0) {};
        \node (c\i) at ({.35 * \x}, 1) {};
        \draw (u\i) -- (v\i) -- (c\i) -- (u\i);
      }
      \foreach \i in {1, 2} {
        \foreach \j in {1, 2, 3}
        \draw (c\i) -- (v\j);
      }
      \draw (0, 1.75) node (x) {} -- (c1) -- (c2) -- (x);
    \end{tikzpicture}
    \caption{\ddag{}}\label{fig:ddag}  
  \end{subfigure}
  \caption{Minimal chordal graphs that are not interval graphs.  A~$\dag$ graph or a~$\ddag$ graph contains at least six vertices.}
  \label{fig:non-interval}
\end{figure}

\begin{theorem}[\cite{lekkerkerker-62-interval-graphs}]
  \label{thm:lb}
  A graph is an interval graph if and only if it does not contain any hole or any graph in Figure~\ref{fig:non-interval} as an induced subgraph.
\end{theorem}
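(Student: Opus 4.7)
The plan is to prove both directions, starting with the easier necessity. Every interval graph is chordal (any cyclic intersection pattern of intervals on a line collapses), so holes are excluded immediately. For the four graphs in Figure~\ref{fig:non-interval} the natural tool is the \emph{asteroidal triple} (AT): a triple $\{a,b,c\}$ such that between any two of them there is a path avoiding the closed neighborhood of the third. In each of long claw, whipping top, \dag, and \ddag{} the three ``tip'' vertices form an AT, and a short argument---look at the triple member whose interval has the leftmost right endpoint; any avoiding path between the other two must cross its neighborhood---shows that no interval graph admits an AT. Hence none of these graphs is interval.

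For the sufficiency direction, suppose $G$ contains neither a hole nor any graph from Figure~\ref{fig:non-interval}. Then $G$ is chordal. I would invoke (or independently reprove) the Gilmore--Hoffman characterization: a chordal graph is an interval graph if and only if it is AT-free. It thus suffices to show that a chordal graph carrying an asteroidal triple $\{a,b,c\}$ must contain one of the four configurations as an induced subgraph. Fix avoiding paths $P_{ab}$, $P_{bc}$, $P_{ac}$ chosen so that their total length, and then the total size of their common dominators, is as small as possible; chordality forces each of them to be induced, and minimality rules out shortcuts. Using the clique-tree structure of $G$, analyze how the three paths meet, which further vertices dominate long subpaths, and which of those dominators are mutually adjacent. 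This yields a finite case split: a single ``branch vertex'' with no further dominator extracts a long claw; a branch vertex carrying a universal apex extracts a whipping top; a dominator sitting above a long induced subpath extracts a \dag; and two mutually adjacent dominators above such a subpath extract a \ddag.

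The main obstacle lies in the final case analysis, and specifically in the parameterized families \dag{} and \ddag{}, whose size $p \ge 3$ is unbounded. One must argue that our minimum-length witnessing paths admit no further shortcut through an extra dominator, so the extracted subgraph is genuinely \emph{induced}; equally delicate is drawing the correct boundary between the four patterns---e.g., distinguishing a long claw from a whipping top when a would-be branch vertex becomes dominated, and distinguishing a \dag{} from a \ddag{} by the adjacency of two dominators. Once this bookkeeping is settled the four configurations exhaust the possible minimal obstructions, and the theorem follows.
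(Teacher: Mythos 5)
The paper itself does not prove this statement; it is quoted from Lekkerkerker and Boland, so there is no in-paper argument to compare against. Your outline follows the classical route of that original source: necessity from chordality plus asteroidal-triple-freeness, and sufficiency by reducing to the theorem that a chordal AT-free graph is interval and then extracting one of the four minimal configurations from any asteroidal triple in a chordal graph. (A small attribution slip: the chordal-plus-AT-free characterization is Lekkerkerker and Boland's own; Gilmore--Hoffman is the consecutive-clique/comparability-of-the-complement characterization. This does not make your argument circular, but the result you ``invoke'' lives in the same source as the theorem you are proving, so for a self-contained proof it too would need to be established.)

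Two concrete issues. First, a technical slip in the necessity direction: in an asteroidal triple the three vertices are pairwise nonadjacent, so their intervals are pairwise disjoint, and the contradiction comes from the \emph{middle} interval, not the one with the leftmost right endpoint. A path from $b$ to $c$ avoiding $N[a]$ has all its intervals disjoint from $I_a$, and their union is a connected set meeting both $I_b$ and $I_c$; this is impossible only when $I_a$ separates $I_b$ from $I_c$ on the line, i.e., when $I_a$ is the middle of the three. Choosing the leftmost interval yields no contradiction. Second, and more seriously, the sufficiency direction is only a plan: the entire content of the theorem is the case analysis showing that a minimal chordal graph containing an asteroidal triple is a long claw, a whipping top, a \dag{} graph, or a \ddag{} graph, and your proposal gestures at this with ``a finite case split'' without carrying any of it out. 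The delicate points you flag yourself---guaranteeing that the extracted subgraph is induced, and separating the unbounded \dag{} and \ddag{} families---are exactly where the work lies, and nothing in the proposal resolves them. As written this is a sound strategy but not yet a proof.
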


The same problem on circular-arc graphs, however, has been open for sixty years~\cite{hadwiger-64-combinatorial-geometry, klee-69-cag}.
It is already very complicated to characterize chordal circular-arc graphs by forbidden induced subgraphs.
A graph is \emph{chordal} if it is~$C_{\ell}$-free for all~$\ell \geq 4$. 
Obviously, all interval graphs are chordal circular-arc graphs.
We rely on the reader to check that the long claw, shown in Figure~\ref{fig:long-claw-unlabeled}, and~$\dag$ graphs on seven or more vertices, shown in Figure~\ref{fig:dag}, are not circular-arc graphs.
Thus, they are chordal forbidden induced subgraphs of circular-arc graphs.\footnote{Note that the set of chordal forbidden induced subgraphs of circular-arc graphs is different from the set of minimal forbidden induced subgraphs of chordal circular-arc graphs.  The first is a subset of the second, which also contains all holes.}  Other chordal forbidden induced subgraphs must contain a whipping top, shown in Figure~\ref{fig:whipping-top}, a net (the~$\dag$ graph on six vertices), or a~$\ddag$ graph, shown in Figure~\ref{fig:ddag}, which are all circular-arc graphs.  However, except for adding an isolated vertex, there is no obvious way to augment them to derive minimal forbidden induced subgraphs, not to mention enumerating minimal chordal forbidden induced subgraphs exhaustively.
For example, let us consider chordal forbidden induced subgraph of circular-arc graphs on ten or fewer vertices.
As listed in the appendix, there are 20 such graphs.  Nine of them can be obtained in the aforementioned way,
while only four of the remaining 11 have been identified in literature~\cite{bonomo-09-partial-characterization-cag, francis-14-blocking-quadruple}.  

Bonomo et al.~\cite{bonomo-09-partial-characterization-cag} characterized chordal circular-arc graphs that are claw-free.
There are only four claw-free chordal graphs that are not circular-arc graphs.
Through generalizing Lekkerkerker and Boland's~\cite{lekkerkerker-62-interval-graphs} structural characterization of interval graphs, Francis et al.~\cite{francis-14-blocking-quadruple} defined a forbidden structure of circular-arc graphs.
This observation enabled them to characterize chordal circular-arc graphs with independence number at most four.
As we will see, most chordal forbidden induced subgraphs of circular-arc graphs contain an induced claw, and their independence numbers can be arbitrarily large.
These attempts motivated Dur{\'a}n et al.~\cite{duran-14-survey} to list finding the forbidden induced subgraph characterization for circular-arc graphs within the class of chordal graphs one of ``the main open problems'' regarding the classes of circular-arc graphs.
We~\cite{cao-24-split-cag} have previously obtained the list  of minimal split graphs that are not circular-arc graphs.
A graph is a \emph{split graph} if its vertex set can be partitioned into a clique and an independent set, and hence all split graphs are chordal.
In passing let us mention that Bang-Jensen and Hell~\cite{bang-jensen-94-chordal-proper-cag} characterized proper circular-arc graphs (i.e., graphs admitting circular-arc models in which no arc properly contains another) that are chordal.

\begin{figure}[ht]
  \centering \small
  \begin{subfigure}[b]{0.12\linewidth}
    \centering
    \begin{tikzpicture}[scale=.6]
      \foreach \i in {0, 2} 
      \node[filled vertex] (v\i) at (0, \i) {};
      \draw (v0) -- (v2);
      \foreach \i in {-1, 1} 
      \node[empty vertex] at (\i, 1) {};
      \node at (0, -.4) {};
    \end{tikzpicture}
    \caption{}
  \end{subfigure}
  \,
  \begin{subfigure}[b]{0.18\linewidth}
    \centering
    \begin{tikzpicture}[scale=.6]
      \def\radius{1.}
        \foreach[count =\j] \i in {0, 1, 2} {
          \node[empty vertex] (u\i) at ({120*\i-30}:{\radius*2.}) {};
          \node[filled vertex] (v\i) at ({120*\i-30}:\radius) {};
          \draw (v\i) -- (u\i);
        }
      \foreach \i in {0,..., 2} {
        \pgfmathsetmacro{\j}{int(Mod(\i + 1, 3))}
        \draw (v\i) -- (v\j);
      }
    \end{tikzpicture}
    \caption{}
  \end{subfigure}
  \,
  \begin{subfigure}[b]{0.2\linewidth}
    \centering
    \begin{tikzpicture}[scale=.7]
      \def\n{4}
      \def\radius{1.}
      \foreach \i in {1, ..., \n}
      \draw ({(\i - .5) * (360 / \n)}:1) -- ({(\i + .5) * (360 / \n)}:1) -- ({(\i) * (360 / \n)}:1.5) -- ({(\i - .5) * (360 / \n)}:1);
      \foreach \i in {1, ..., \n} {
        \node[empty vertex] (u\i) at ({(\i) * (360 / \n)}:{\radius*1.5}) {};
        \node[filled vertex] (v\i) at ({(\i - .5) * (360 / \n)}:\radius) {};
      }
      \draw (v1) -- (v3) (v2) -- (v4);
    \end{tikzpicture}
    \caption{}
  \end{subfigure}
  \quad
  \begin{subfigure}[b]{0.2\linewidth}
    \centering
    \begin{tikzpicture}[scale=.7]
      \def\n{5}
      \def\radius{1.}      
      \foreach \i in {1,..., \n} {
        \pgfmathsetmacro{\angle}{90 - (\i) * (360 / \n)}
        \foreach \j in {-1, 0, 1}
        \draw (\angle:{\radius*1.6}) -- ({90 - (\i + \j) * (360 / \n)}:{\radius});
      }
      \foreach \i in {1,..., \n} {
        \pgfmathsetmacro{\angle}{90 - (\i) * (360 / \n)}
        \node[filled vertex] (v\i) at (\angle:\radius) {};
        \node[empty vertex] (u\i) at (\angle:{\radius*1.6}) {};
        \pgfmathsetmacro{\p}{\i - 1}        
        \foreach \j in {1, ..., \p}
        \ifthenelse{\i>1}{\draw (v\i) -- (v\j)}{};
      }
    \end{tikzpicture}
    \caption{}
  \end{subfigure}
  \caption{The complements of~$k$-suns, for~$k = 2, 3, 4, 5$.}
  \label{fig:complement-suns}
\end{figure}

\paragraph{The~$\otimes$ graphs.}
The main discovery of the present paper is the family of~$\otimes$ graphs.
The \emph{complement graph}~$\overline{G}$ of a graph~$G$ is defined on the same vertex set~$V(G)$, where a pair of distinct vertices~$u$ and~$v$ is adjacent in~$\overline{G}$ if and only if~$u v \not\in E(G)$.
For~$k \ge 2$, the \emph{$k$-sun}, denoted as~$S_{k}$, is the graph obtained from a cycle of length~${2 k}$ by adding all edges among the even-numbered vertices to make them a clique.
It is a split graph with a unique split partition, and so is its complement,~$\overline{S_{k}}$, in which each vertex has precisely two non-neighbors in the other part.
See Figure~\ref{fig:complement-suns} for the smallest examples of~$\overline{S_{k}}$.
Note that~$\overline{S_{4}}$ and~$S_{4}$ are isomorphic, while~$\overline{S_{3}}$ and~$S_{3}$ are the net and the sun, respectively.

\begin{figure}[ht]
  \centering \small
  \begin{tikzpicture}[scale=1]
    \draw[fill = gray!30, draw = gray] (2.5, 0.1) ellipse [x radius=3., y radius=0.7];
    \foreach \x/\l in {0/1, 5/k} {
      \node[filled vertex, "$v_{\l}$" below] (v\x) at (\x, 0.3) {};
    }
    \foreach[count=\i from 0] \x/\l in {0/k, 1/k - 1, 2/k - 2, 4/2, 5/1, 6/0} 
    \node[empty vertex, "$w_{\l}$"] (w\i) at (\x-.5, 1.5) {};
    \foreach[count=\i] \x/\l in {.8/2, 1.7/3, 3.3/k-2, 4.2/k-1} 
    \node[filled vertex, "$v_{\l}$" below] (v\i) at (\x, 0) {};

    \node at (2.5, 0) {$\ldots$};
    \node at (2.5, 1.5) {$\ldots$};

    \foreach \v/\list in {w0/{v0, v1, v2, v4, v3}, w1/{v0, v1, v2, v3}, w2/{v0, v1, v2, v5}, w3/{v0, v3, v4, v5}, w4/{v2, v3, v4, v5}, w5/{v1, v2, v3, v4, v5}}
    \foreach \x in \list \draw (\x) -- (\v);
  \end{tikzpicture}
  \caption{The gadget~$D_{k}$.  Lines among solid nodes are omitted for clarity.
  }
  \label{fig:gadget}
\end{figure}

For~$k \ge 1$, we define the gadget~$D_{k}$ as a subgraph of~$\overline{S_{k+1}}$ obtained by removing one vertex of degree~$2k - 1$ (a solid node in~Figure~\ref{fig:complement-suns}).
An example is illustrated in Figure~\ref{fig:gadget}, where the removed vertex was adjacent to~$w_{1}, \ldots, w_{k - 1}$.
The gadget~$D_{k}$ consists of~$2 k + 1$ vertices.  
The vertex set~$\{v_{1}, v_{2}, \ldots, v_{k}\}$ is a clique, hence called the \emph{clique vertices} of this gadget.
The remaining vertices, $\{w_{0}, w_{1}, \ldots, w_{k}\}$, form an independent set, and for~$i = 0, 1, \ldots, k$,
\[
  N(w_{i}) = \{v_{1}, v_{2}, \ldots, v_{k}\} \setminus \{v_{i}, v_{i+1}\},
\]
where~ $v_{0}$ and~$v_{k+1}$ are dummy vertices.
The vertices~$w_{0}$ and~$w_{k}$, i.e., the non-neighbors of the removed vertex from~$\overline{S_{k+1}}$, are called the \emph{ends} of this gadget; their degrees in the gadget are~$k - 1$ (while the degrees of other vertices are~$k - 2$ and~$2 k - 2$; note that~$k - 1 = 2 k - 2$ only when~$k = 1$).

\begin{figure}[ht]
  \centering \small
  \begin{tikzpicture}[scale=1.]
    \def\n{3}
    \def\radius{2}
    \begin{scope}[rotate=45]
      \filldraw[draw = gray, fill=gray!20] (\radius*1, 0) ellipse [x radius=.5, y radius=0.85];
      \foreach[count=\i] \y in {-1, 0, 1} {
        \node[filled vertex] (x\i) at (\radius*.9, \y*.4) {};
        \foreach \j in {-1, 0, 1}
        \draw (x\i) -- ++(-.4, \j/10);
      }
      \draw (x1) -- (x2) -- (x3) (x1) edge[bend right] (x3);
      \foreach[count=\i] \y in {-3, -1, 1, 3}
      \node[empty vertex] (y\i) at (\radius*1.1, \y*.15) {};
      \draw (y2) -- (x1) -- (y1) -- (x2) -- (y4) -- (x3) -- (y3);

      \node at (\radius*1.5, 0) {$D_{a_{0}}$};
    \end{scope}
    \foreach \i in {1, 2, 3, 4} {
      \foreach \j in {1, 2}
      \node[empty vertex] at ({90*\i+30-\j*20}:\radius)  (u\the\numexpr\i*2-2+\j\relax) {};
      \node at ({90-90*\i}:{\radius*1.2}) {$P_{a_{\the\numexpr\i*2-1\relax}}$};      
    }
    \node[fill=white] at ({90}:{\radius*1.2}) {$P_{a_{2p-1}}$};          
    \foreach \i in {1, 3, 5, 7}
    \draw[dashed] (u\i) -- (u\the\numexpr\i+1\relax);
    \draw (u4) -- ++(.05, .3) (u1) -- ++(-.3, -.05);
    
    \draw (u2) -- (y4) (u7) -- (y1);
    \begin{scope}[rotate=-45]
      \filldraw[draw = gray, fill=gray!20] (\radius*1., 0) ellipse [x radius=.5, y radius=0.85];
      \foreach[count=\i] \y in {-1, 0, 1} {
        \node[filled vertex] (x\i) at (\radius*.9, \y*.4) {};
        \foreach \j in {-1, 0, 1}
        \draw (x\i) -- ++(-.4, \j/10);
      }
      \draw (x1) -- (x2) -- (x3) (x1) edge[bend right] (x3);
      \foreach[count=\i] \y in {-3, -1, 1, 3}
      \node[empty vertex] (y\i) at (\radius*1.1, \y*.15) {};
      \draw (y2) -- (x1) -- (y1) -- (x2) -- (y4) -- (x3) -- (y3);

      \node at (\radius*1.5, 0) {$D_{a_{2}}$};
    \end{scope}
    \draw (u5) -- (y1) (u8) -- (y4);
    \begin{scope}[rotate=225]
      \filldraw[draw = gray, fill=gray!20] (\radius*1., 0) ellipse [x radius=.5, y radius=0.85];
      \foreach[count=\i] \y in {-3, 3, 0}
      \node[empty vertex] (y\i) at (\radius*1.1, \y*.15) {};
      \foreach[count=\i] \y in {-1, 1} {
        \node[filled vertex] (x\i) at (\radius*.9, \y*.35) {};
        \foreach \j in {-1, 0, 1}
        \draw (x\i) -- ++(-.4, \j/10);
        \draw (x\i) -- (y\i);
      }
      \draw (x1) -- (x2);        
      \node at (\radius*1.4, 0) {$D_{a_{4}}$};
    \end{scope}
    \draw (u3) -- (y1) (u6) -- (y2);
    \begin{scope}[rotate=135]
      \node[rotate=45] at (0:\radius) {$\cdots\cdots$};
    \end{scope}
    
    \node[empty vertex, "$c$"] (c) at (0, 0) {};
  \end{tikzpicture}
  \caption{The graph~$\otimes(a_{0}, a_{1}, \ldots, a_{2 p - 1})$.
    Inside each shadowed ellipse is a gadget.
    Each clique vertex (solid node) of a gadget is adjacent to all vertices not in the gadget.
  }
  \label{fig:the-graph}
\end{figure}

We are now ready to describe the main family of chordal forbidden induced subgraphs of circular-arc graphs.  Let~$p$ be a positive integer and~$\langle a_{0}, a_{1}, \ldots, a_{2 p - 1} \rangle$ a sequence of~$2 p$ positive integers.  For~$i = 0, \ldots, p-1$, we introduce a gadget~$D_{a_{2 i}}$ and a path~$P_{a_{2i + 1}}$.
For each gadget and each path, we arbitrarily assign the ends as \emph{the left end} and \emph{the right end}.  (Note that the two ends are identical for a trivial path.)
We put the gadgets and paths in order circularly, and connect the right end of one gadget/path with the left end of next path/gadget.
We also introduce a special vertex~$c$, and this concludes the vertex set.
For each gadget, we add edges between its clique vertices and all other vertices not in this gadget.
The resulting graph is denoted as~$\otimes(a_{0}, a_{1}, \ldots, a_{2 p - 1})$.
See Figure~\ref{fig:the-graph} for an illustration and below are two simple examples. 
\begin{itemize}
\item Graph~$\otimes(1, a)$ with~$a \ge 2$ is the~$\dag$ graph of order~$a + 4$.
  Here~$c$ is the degree-one vertex at the top, the gadget~$D_{1}$ comprises the other two degree-one vertices and the vertex of degree~$a+1$, while the path comprises the remaining vertices (at the bottom).
\item Graph~$\otimes(2, a)$ with~$a \ge 1$ is the~$\ddag$ graph of order~$a + 5$ augmented with an isolated vertex.  Here~$c$ is the degree-two vertex at the top, the gadget~$D_{2}$ comprises the other two degree-two vertices and the two vertices of degree~$a+3$, while the path comprises the remaining vertices (at the bottom).
\end{itemize}
Illustrations of~$\otimes(a, b)$ and~$\otimes(1, a, 1, b)$ for small~$a$ and~$b$ can be found in the appendix.
Note that the order of the graph~$\otimes(a_{0}, a_{1}, \ldots, a_{2 p - 1})$ is
\[
  1 + \sum^{p-1}_{i=0} (2 a_{2 i} + 1 + a_{2 i + 1}) = p + 1 + \sum^{2 p-1}_{i=0} a_{i} + \sum^{p-1}_{i=0} a_{2 i}.
\]
It is worth noting that graphs defined by two different sequences might be the same, for example, $\otimes(1, 2, 3, 4)$,~$\otimes(3, 4, 1, 2)$,~$\otimes(3, 2, 1, 4)$, and~$\otimes(1, 4, 3, 2)$ refer to the same graph.
Graphs~$\otimes(1, 1)$ and~$\otimes(1, 2)$ are circular-arc graphs.  
They are the only circular-arc graphs that arise from this construction.
All~$\otimes(a_{0}, a_{1}, \ldots, a_{2 p - 1})$ graphs that are not~$\otimes(1, 1)$ or~$\otimes(1, 2)$ are hence called~\emph{$\otimes$ graphs}.  Every~$\otimes$ graph is a minimal chordal graph that is not a circular-arc graph.  We have a stronger statement here.
A graph is a \emph{Helly circular-arc graph} if it admits a model in which the arcs for every maximal clique have a shared point.

\begin{proposition}\label{lem:o-graphs}
  Let~$p$ be a positive integer,~$(a_{0}, a_{1}, \ldots, a_{2 p - 1})$ a sequence of positive integers different from~$(1, 1)$ and~$(1, 2)$, and let~$G$ be the graph~$\otimes(a_{0}, a_{1}, \ldots, a_{2 p - 1})$.  Then
  \begin{itemize}
  \item $G$ is a chordal graph;
  \item $G$ is not a circular-arc graph; and
  \item for any vertex~$x\in V(G)$, the graph~$G - x$ is a Helly circular-arc graph.
  \end{itemize}
\end{proposition}

\begin{figure}[ht]
  \centering \small
  \begin{subfigure}[b]{0.35\linewidth}
    \centering
    \begin{tikzpicture}
      \draw[fill = gray!30, draw = gray] (2.5, 0) ellipse [x radius=2.2, y radius=0.5];
      \node[filled vertex, "$s$" below] (s) at (4, 0) {};
      \foreach \i in {1, 2, 3}
      \node[filled vertex, "$v_\i$" below] (v\i) at (\i, 0) {};
      \foreach \i in {1, 2, 3, 4}
      \node[empty vertex, "$u_\i$"] (u\i) at (\i, 1) {};
      \foreach[count=\i] \list in {{1, 3}, {2, 3, 4}, {3, 4}}
      \foreach \x in \list \draw (u\x) -- (v\i);

      \draw (u1) -- (u2) -- (u3) -- (u4);
      \node at (4.5, 0) {$K$};
    \end{tikzpicture}
    \caption{} \end{subfigure}
  \,
  \begin{subfigure}[b]{0.4\linewidth}
    \centering
    \begin{tikzpicture}[xscale=.3]
        \def\leftend{0}
        \def\rightend{15}
        \pgfmathsetmacro{\middle}{(\leftend + \rightend)/2}
        \def\bottomend{-.6}
        
      \begin{scope}[every path/.style={{|[right]}-{|[left]}}]
        \foreach[count=\i] \l/\r/\y in {3/6/4, 5/9/5, 8/14/4, 11/12/5}{
          \draw[thick] (\l-.02, \y/5) node[left, xshift=3pt, yshift=2pt] {\footnotesize $u_\i$} to (\r+.02, \y/5);
        }
      \draw[Sepia, thick] (\middle-.5, \bottomend) node[left] {\footnotesize $s$} to (\middle+.5, \bottomend);
      \end{scope}
        \foreach[count=\i] \l/\r/\y in {4/13/2, 2/7/3, 1/10/1}{
          \draw[Sepia, dotted] (\l-.02, \y/5) to (\r+.02, \y/5) node[left] {\footnotesize $v_\i$};
          \draw[{|[right]}-{|[left]}, Sepia, thick, rounded corners] (\r+.02, \y/5) -- ({\rightend + \y/5}, \y/5) --({\rightend + \y/5}, \bottomend-\y/5)--(\leftend - \y/5, \bottomend-\y/5)--(\leftend - \y/5, \y/5)--(\l-.02, \y/5);          
        }
        \draw[] (\middle, \bottomend+.2) -- (\middle, \bottomend-.7);
      \end{tikzpicture}
    \caption{}\label{fig:normalized_model}
  \end{subfigure}

  \begin{subfigure}[b]{0.35\linewidth}
    \centering
    \begin{tikzpicture}
      \node[a-vertex, "$s$" below] (s) at (4, 0) {};
      \foreach[count=\i] \n in {3, 2, 1}
      \node[a-vertex, "$v_\n$" below] (v\n) at (\i, 0) {};
      \foreach[count=\i] \n in {1, 2, 3, 4}
      \node[b-vertex, "$u_\n$"] (u\n) at (\i, 1) {};
\foreach[count=\i] \list in {{2, 4}, {1}, {1, 2}}
      \foreach \x in \list \draw (u\x) -- (v\i);
      \foreach[count=\i] \list in {{1, 3}, {2, 3}, {3}}
      \foreach \x in \list \draw[witnessed edge] (u\x) -- (v\i);
      
      \draw (u1) -- (u2) -- (u3) -- (u4);
      \draw (v1) -- (v2) -- (v3);
      \draw[bend left=15] (v1) edge (v3);
      \foreach \i in {0, ..., 4}
      \draw (s) -- ++(\i*.1 - .4, \i*.1);
    \end{tikzpicture}
    \caption{}
  \end{subfigure}
  \begin{subfigure}[b]{0.4\linewidth}
    \centering
    \begin{tikzpicture}[xscale=.25]
        \def\leftend{0}
        \def\rightend{15}
        \pgfmathsetmacro{\middle}{(\leftend + \rightend)/2}
        \def\bottomend{-.6}
        
      \begin{scope}[every path/.style={{|[right]}-{|[left]}}]
        \foreach[count=\i] \l/\r/\y in {3/6/4, 5/9/5, 8/14/4, 11/12/5}{
          \draw[thick] (\l-.02, \y/5) node[left, xshift=3pt, yshift=2pt] {\footnotesize $u_\i$} to (\r+.02, \y/5);
        }
      \draw[Sepia, thick] (\leftend, 0) node[left, xshift=3pt, yshift=2pt] {\footnotesize $s$} to (\rightend, 0);
        \foreach[count=\i] \l/\r/\y in {4/13/2, 2/7/3, 1/10/1}{
          \draw[Sepia, thick] (\l-.02, \y/5) node[left, xshift=3pt, yshift=2pt] {\footnotesize $v_\i$} to (\r+.02, \y/5);
        }
      \end{scope}
      \end{tikzpicture}
    \caption{}\label{fig:interval_model}
  \end{subfigure}
  \caption{Illustration for McConnell's transformation.
    (a) A circular-arc graph~$G$, where edges among the vertices in the clique~$K$ (in the shadowed area) are omitted for clarity; (b) a normalized circular-arc model of~$G$; (c) the interval graph~$G^K$, where edges incident to the universal vertex~$s$ are omitted for clarity; and (d) the interval model of~$G^K$ derived from (b) by flipping the arcs containing the center of the bottom.  
  }
  \label{fig:McConnell-transformation}
\end{figure}

\paragraph{Our results.}
The algorithm of McConnell~\cite{mcconnell-03-recognition-cag} recognizes circular-arc graphs by transforming them into interval graphs.
A vertex $v \in V(G)$ is \emph{universal} in $G$ if $N[v] = V(G)$.
Let~$G$ be a circular-arc graph with no universal vertices and~$\mathcal{A}$ a fixed circular-arc model of~$G$.
By \emph{flipping} an arc~$[\lp, \rp]$ we replace it with arc~$[\rp, \lp]$ --- note that flipping is different from complementing, which ends with open arcs, and we prefer to stick to closed arcs and closed intervals for simplicity.
If we flip all arcs containing some fixed point of the circle, we end with an interval model~$\mathcal{I}$; see Figure~\ref{fig:McConnell-transformation} for an illustration.
A crucial observation of McConnell is that the resulting interval graph is decided by the set of vertices whose arcs are flipped and not by the original circular-arc models.  For a suitable clique~$K$, all circular-arc models of~$G$ with certain properties lead to the same interval graph~\cite{mcconnell-03-recognition-cag}; see also \cite{cao-24-split-cag, cao-24-cag-ii-flipping}.
Thus, it makes sense to denote it as~$G^K$.
McConnell presented an algorithm to find a suitable set~$K$ and constructed the graph directly from~$G$, without a circular-arc model.
As we have seen, the construction is very simple when~$G$ is chordal~\cite{cao-24-split-cag, cao-24-cag-ii-flipping}.
In particular, the closed neighborhood of every simplicial vertex can be used as the clique~$K$~\cite{hsu-95-independent-set-cag}.

However,~$G^K$ being an interval graph does not imply that~$G$ is a circular-arc graph.
There are restrictions on the intersection patterns of the intervals.
A previous paper of this series~\cite{cao-24-cag-ii-flipping} has fully characterized this correlation when~$G$ is~$C_{4}$-free: it is a circular-arc graph if and only if~$G^K$ does not contain certain forbidden configurations (the formal definition is deferred to the next section).
It is well known that if a minimal forbidden induced subgraph is not connected, then it has precisely two components, a non-interval subgraph and an isolated vertex.
For connected chordal graphs, we can greatly shorten this list.

\begin{figure}[ht]
  \centering \small
  \begin{subfigure}[b]{0.2\linewidth}
    \centering
    \begin{tikzpicture}[scale=.5]
      \foreach \i in {1, 2, 3} {
        \draw ({120*\i+90}:2) -- ({120*\i-30}:2);
        \draw ({120*\i-90}:1) -- ({120*\i+30}:1);
      }
      \foreach \i in {1, 2, 3} {
        \pgfmathparse{int(Mod(\i,3))}
        \node[empty vertex] (u\i) at ({90-120*\i}:2) {};
      }
      \foreach \i in {1 , 2 , 3} {
        \pgfmathparse{int(Mod(\i,3))}
        \node[empty vertex] (v\i) at ({270-120*\i}:1) {};
      }
    \end{tikzpicture}
    \caption{}
    \label{fig:sun-simplified}
  \end{subfigure}
  \begin{subfigure}[b]{0.2\linewidth}
    \centering
    \begin{tikzpicture}[label distance=-2pt, scale=.7]
      \node[empty vertex] (v7) at (0, -1) {};
      \draw (-2, 0) -- (2, 0);
      \foreach[count=\i from 2] \v/\p in {x_{1}/above, v_{1}/above, v_{0}/above right, v_{2}/above, x_{2}/above} {
        \node[empty vertex] (v\i) at ({\i - 4}, 0) {};
        \draw (v\i) -- (v7);
      }
      \node[b-vertex] (v1) at (0, .75) {};
      \foreach \i in {2, 6} \node[b-vertex] at (v\i) {};
      \draw (v4) -- (v1);      
    \end{tikzpicture}
    \caption{}
    \label{fig:whipping-top-simplified}
  \end{subfigure}
  \begin{subfigure}[b]{0.2\linewidth}
    \centering
    \begin{tikzpicture}[scale=.7]
\node[a-vertex] (a) at (0, 1.) {};
      \node[b-vertex] (u) at (0, 0) {};
      \draw[witnessed edge] (a) -- (u);
      \foreach[count=\j] \x in {-1, 1} {
        \draw (a) -- ({2*\x}, 0) node[b-vertex] (x\j) {};        
        \draw (a) -- (\x, 0) node[empty vertex] (v\j) {};
        \draw (x\j) -- (v\j);
      }
      \draw (v1) -- (u) -- (v2);
    \end{tikzpicture}
    \caption{}
    \label{fig:p5x1-simplified}
  \end{subfigure}
  \begin{subfigure}[b]{0.2\linewidth}
    \centering
    \begin{tikzpicture}[scale=.75]
      \draw (1, 1) -- (4, 1);

      \node[empty vertex] (v4) at (3, 0.2) {};
      \foreach[count=\i] \t/\v/\x in {b-/x_1/1, a-/x_2/2, a-/{\quad v}/3.5, b-/u/5} {
        \node[\t vertex] (u\i) at (\i, 1) {};
      }
      \draw (v4) -- (u1);
      \draw (u2) -- (v4) -- (u3);
      \draw (v4) -- (u4);

      \draw (u3) -- ++(0, .85) node[b-vertex] (x2) {};
      \draw[witnessed edge] (u3) -- (u4);
    \end{tikzpicture}
    \caption{}
    \label{fig:whipping-top-1-simplified}
  \end{subfigure}

  \begin{subfigure}[b]{0.2\linewidth}
    \centering
    \begin{tikzpicture}[scale=.5]
      \def\n{5}
      \def\radius{1.5}
      \foreach \i in {0,..., \the\numexpr\n-1\relax} {
        \pgfmathsetmacro{\angle}{90 - (2 - \i) * (360 / \n)}
        \node[empty vertex] (v\i) at (\angle:\radius) {};
}
      \foreach \i in {1,..., \the\numexpr\n-1\relax} {
        \draw (v\i) -- (v\inteval{\i-1});
      }
      \draw[dashed] (v0) -- (v\inteval{\n-1});      
    \end{tikzpicture}
    \caption{}
    \label{fig:holes}
  \end{subfigure}
  \begin{subfigure}[b]{0.2\linewidth}
    \centering
    \begin{tikzpicture}[scale=.75]
      \node[b-vertex] (x1) at (2, 1.75) {};
      \node[empty vertex] (x2) at (2.5, 1) {};
      \foreach \i in {3, 5} 
      \node[b-vertex] (x\i) at (\i-2, 0) {};
      \node[a-vertex] (v) at (1.5, 1) {};
      \node[empty vertex] (x4) at (2., 0) {};

      \foreach \i in {1, 4, 5} \draw (x2) -- (x\i);
      \foreach \i in {1, ..., 5} \draw (v) -- (x\i);
      \draw (x3) -- (x4) (x4) edge[dashed] (x5);
      \draw[witnessed edge] (v) -- (x5) {};
    \end{tikzpicture}
    \caption{}
    \label{fig:ddag+e-simplified}
  \end{subfigure}
  \begin{subfigure}[b]{0.2\linewidth}
    \centering
    \begin{tikzpicture}[scale=.75]
      \node[b-vertex] (x0) at (2.5, 1.75) {};
      \foreach \i in {1, 2} {
        \node[a-vertex] (v\i) at ({1+\i}, 1) {};
        \draw (x0) -- (v\i);
      }

      \foreach[count=\i] \t/\l in {b-/u_{1}, empty /x_{1}, empty /x_{p}, b-/u_{2}} {
        \node [\t vertex] (u\i) at (\i, 0) {};
      }
      
      \foreach \i in {1, 2}
      \draw[witnessed edge] (v\the\numexpr3-\i\relax) -- (u\the\numexpr\i*\i\relax);
      \foreach \i in {1, 2, 3} \draw (v1) -- (u\i);
      \foreach \i in {2, 3, 4} \draw (v2) -- (u\i);
      \draw (u1) -- (u2) (u2) edge[dashed] (u3)  (u3) -- (u4)  (v1) -- (v2);
    \end{tikzpicture}
    \caption{}
    \label{fig:ddag+2e-simplified}
  \end{subfigure}
  \begin{subfigure}[b]{0.2\linewidth}
    \centering
    \begin{tikzpicture}[scale=.75]
      \node[b-vertex] (x0) at (2.5, 1.75) {};
      \foreach \i in {1, 2} {
        \node[empty vertex] (v\i) at ({1+\i}, 1) {};
        \draw (x0) -- (v\i);
      }

      \foreach[count=\i] \t/\l in {b-/u_{1}, empty /x_{1}, empty /x_{p}, b-/u_{2}} {
        \node [\t vertex] (u\i) at (\i, 0) {};
      }
      
      \foreach \i in {1, 2, 3} \draw (v1) -- (u\i);
      \foreach \i in {2, 3, 4} \draw (v2) -- (u\i);
      \draw (u1) -- (u2) (u2) edge[dashed] (u3)  (u3) -- (u4)  (v1) -- (v2);
    \end{tikzpicture}
    \caption{}
    \label{fig:ddag-simplified}
  \end{subfigure}
  \caption{Forbidden configurations in~$G^K$.
The square nodes are ``in $N_{G}[s]$,'' rhombus ``not in~$N_{G}[s]$,'' and round ``uncertain.''
Between a square node and a rhombus node, a thick edge is in~$G$, and a thin edge is not.
  There are at least four vertices in~\ref{fig:holes}, at least six vertices in~\ref{fig:ddag+e-simplified}, and at least seven vertices in~\ref{fig:ddag+2e-simplified} and~\ref{fig:ddag-simplified}.
    }
  \label{fig:simplified-forbidden-configurations}
\end{figure}

\begin{theorem}\label{thm:simplified-forbidden-configurations}
  Let~$G$ be a minimal forbidden induced subgraph of circular-arc graphs.
  If~$G$ is connected and chordal, then for every simplicial vertex~$s$, the graph~$G^{N[s]}$ contains a forbidden configuration in Figure~\ref{fig:simplified-forbidden-configurations}.
\end{theorem}

Theorem~\ref{thm:simplified-forbidden-configurations} provides a way of finding all chordal forbidden induced subgraphs of circular-arc graphs: it suffices to identify all possible graphs from which the forbidden configurations are derived.
The task is then to ``reverse'' McConnell flipping to get all possible graphs~$G$ such that~$G^{K}$ contains a forbidden configuration.

\begin{figure}[ht]
  \centering \small
  \begin{subfigure}[b]{0.15\linewidth}
    \centering
    \begin{tikzpicture}[scale=.6]
      \def\radius{1.}
      \node[empty vertex] (c) at (0, 0) {};
      \foreach[count =\j] \i in {0, 1, 2} {
        \node[empty vertex] (u\i) at ({120*\i-30}:{\radius*2.}) {};
        \node[filled vertex] (v\i) at ({120*\i-30}:\radius) {};
        \draw (v\i) -- (u\i);
      }
      \foreach \i in {0,..., 2} {
        \pgfmathsetmacro{\j}{int(Mod(\i + 1, 3))}
        \draw (v\i) -- (v\j);
      }
    \end{tikzpicture}
    \caption{}
    \label{fig:net-star}
  \end{subfigure}
  \begin{subfigure}[b]{0.15\linewidth}
    \centering
    \begin{tikzpicture}[scale=.8]
      \node[empty vertex] (c) at (0, -0.3) {};

      \def\n{4}
      \def\radius{1.25}
      
      \foreach \i in {1, ..., \n}
      \draw ({(\i - .5) * (360 / \n)}:1) -- ({(\i + .5) * (360 / \n)}:1) -- ({(\i) * (360 / \n)}:\radius) -- ({(\i - .5) * (360 / \n)}:1);
      \foreach \i in {1, ..., \n} {
        \node[empty vertex] (u\i) at ({(\i) * (360 / \n)}:\radius) {};
        \node[filled vertex] (v\i) at ({(\i - .5) * (360 / \n)}:1) {};
      }
      \foreach \i in {1, 3} \draw (c) -- (v\i);
      \draw (v1) -- (v3) (v2) -- (v4);
    \end{tikzpicture}
    \caption{}
    \label{fig:the-weird}
  \end{subfigure}
  \begin{subfigure}[b]{0.15\linewidth}
    \centering
    \begin{tikzpicture}[scale=.6]
      \node[filled vertex] (c) at (0, 0) {};
      \foreach \i in {1, 2, 3} {
        \foreach[count=\j] \x in {0, 1} {
          \node[empty vertex](u\i\j) at ({120*\i - 30}:{1.2+\x*.8}) {};
          \draw ({120*\i-90}:1) -- (u\i\j) -- ({120*\i+30}:1);
          \ifthenelse{\x=0}{\draw (u\i\j) -- (c)}{};
        }
          \draw ({120*\i-90}:1) -- ({120*\i+30}:1);
      }

      \foreach \i in {1, 2, 3} {
        \node[filled vertex](v\i) at ({120*\i-90}:1) {};
        \draw (v\i) -- (c);
      }
    \end{tikzpicture}
    \caption{}
    \label{fig:long-claw-derived}
  \end{subfigure}
  \begin{subfigure}[b]{0.15\linewidth}
    \centering
    \begin{tikzpicture}[scale=.6]
      \node[filled vertex] (c) at (0, 0) {};
      \foreach \i/\l/\p in {1/1/, 2/6/below, 3/2/below} {
          \node[empty vertex] (u\i) at ({120*\i-30}:2) {};
          \draw ({120*\i-90}:1) -- (u\i) -- ({120*\i+30}:1);
          \draw ({120*\i-90}:1) -- ({120*\i+30}:1);
      }
      \foreach[count =\j from 3] \i/\p in {1/, 2/below, 3/} {
        \node[filled vertex] (v\i) at ({120*\i+30}:1) {};
        \draw (v\i) -- (c);
      }
      \draw (u1) -- (c);
      \foreach \i/\x/\l in {1/-1/457, 3/1/347} {
        \draw (v\i) -- ({90 - 60*\x}:2) node[empty vertex] {};
      }
    \end{tikzpicture}
    \caption{}
    \label{fig:whipping-top-derived}
  \end{subfigure}
  \qquad
  \begin{subfigure}[b]{0.14\linewidth}
    \centering
    \begin{tikzpicture}[xscale=.5, yscale=.45]
      \def\radius{1.}
        \foreach[count =\j] \i in {0, 1, 2} {
          \node[empty vertex] (u\i) at ({120*\i-30}:{\radius*2.}) {};
          \node[filled vertex] (v\i) at ({120*\i-30}:\radius) {};
          \draw (v\i) -- (u\i);
        }
      \foreach \i in {0,..., 2} {
        \pgfmathsetmacro{\j}{int(Mod(\i + 1, 3))}
        \draw (v\i) -- (v\j);
      }
      \node[empty vertex] (x) at (90:3) {};
      \draw[thick] (u1) -- (x);
    \end{tikzpicture}
    \caption{}
    \label{fig:extended-net}
  \end{subfigure}
  \begin{subfigure}[b]{0.14\linewidth}
    \centering
    \begin{tikzpicture}[scale=.8]
      \draw (-1, -1) grid (1,1);
      \draw[thick] (-1, 1) -- (1,1);
      \draw[bend right] (-1, 0) edge (1, 0);
      \draw (-1, 0) -- (0, 1) -- (1, 0) -- (0, -1)--cycle;      
      \foreach \x in {-1, 0, 1}
      \foreach \y in {-1, 0, 1}
      \node[filled vertex] at (\x, \y) {};
      \foreach \x in {-1, 1}
      \foreach \y in {-1, 1}
      \node[empty vertex] at (\x, \y) {};
      \node[empty vertex] at (0, 1) {};      
    \end{tikzpicture}
    \caption{}
    \label{fig:(S4+)-e}
  \end{subfigure}
\caption{Minimal chordal graphs that are not circular-arc graphs. The first four are split graphs. We end up with a graph in Figure~\ref{fig:net-star} and~\ref{fig:whipping-top-derived} after the thick lines removed from Figure~\ref{fig:extended-net} and~\ref{fig:(S4+)-e}, respectively.}
  \label{fig:chordal-non-cag}
\end{figure}

Let~$G$ be a minimal chordal forbidden induced subgraph of circular-arc graphs.
The analyses of the first four forbidden configurations reveal eight small forbidden induced subgraphs, listed in Figure~\ref{fig:chordal-non-cag}.
We show that if~$G^{N[s]}$ contains a hole~$C_\ell$, i.e., Configuration~\ref{fig:holes}, then~$G$ is either an~$\otimes$ graph or~$\overline{S_{\ell}^{+}}$, i.e., the graph obtained from~$\overline{S_{\ell}}$ by adding a vertex and making it adjacent to all the vertices of degree~$2 \ell -3$ (solid nodes in Figure~\ref{fig:complement-suns}).\footnote{In a sense, the graphs~$\overline{S_{k}^{+}}$ and~$C^\star_{k}$ can be viewed as~$\otimes(k, 0)$ and~$\otimes(0, k)$, respectively. Although it is not difficult to twist the definition of~$\otimes$ graphs to include them, we use the current one for the purpose of simplicity.}
Interestingly, none of the other infinite families of forbidden configurations, namely,~\ref{fig:ddag+e-simplified},~\ref{fig:ddag+2e-simplified}, and~\ref{fig:ddag-simplified}, leads to any new chordal forbidden induced subgraph. 
Indeed, if~$G^{s}$ contains configuration~\ref{fig:ddag+e-simplified},~\ref{fig:ddag+2e-simplified}, or~\ref{fig:ddag-simplified}, we can find another simplicial vertex~$s'$ of~$G$ such that~$G^{N[s']}$ contains one of Configurations~\ref{fig:sun-simplified}--\ref{fig:holes}.

The main result of the present paper is as follows.
For a graph~$G$, we use~$G^{\star}$ to denote the graph obtained from~$G$ by adding an isolated vertex.

\begin{theorem}\label{thm:main}
  A chordal graph is a circular-arc graph if and only if it does not contain an induced copy of long claw, whipping top$^\star$, a graph in Figure~\ref{fig:chordal-non-cag},~$\overline{S_{k}^{+}}, k \ge 3$, or an~$\otimes$ graph.
\end{theorem}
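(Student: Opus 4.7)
The plan is to prove both directions separately. For necessity, Proposition~\ref{lem:o-graphs} already handles the $\otimes$ graphs; the long claw and the $\dag$ graphs on at least seven vertices (which are exactly $\otimes(1,a)$ with $a \ge 3$) are known to be non-circular-arc. For the remaining graphs---whipping top$^\star$, the graphs in Figure~\ref{fig:chordal-non-cag}, and $\overline{S_{k}^{+}}$ with $k \ge 3$---I would give short self-contained verifications, in most cases by assuming an arc model and deriving a contradiction by tracking a small family of cliques whose arcs must together cover the circle. The graphs $\overline{S_{k}^{+}}$ are split, so they can alternatively be handled by invoking the split-graph classification of~\cite{cao-24-split-cag}.

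For sufficiency I would argue by contrapositive: assume that $G$ is a chordal graph that is not a circular-arc graph, and produce one of the listed induced subgraphs. Pick any simplicial vertex $s$ of $G$ and set $K := N[s]$, which is a clique. Because chordal graphs are $C_{4}$-free, the flipping characterization of~\cite{cao-24-cag-ii-flipping} applies: $G$ fails to be a circular-arc graph precisely when $G^{N[s]}$ is either not an interval graph or contains one of the forbidden interval configurations reproduced in Section~\ref{sec:pre}. The task thereby reduces to a \emph{reverse-construction} problem: given a forbidden configuration $F$ appearing in $G^{N[s]}$, lift $F$ back to an induced subgraph of $G$ that contains one of the graphs in the theorem statement.

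The argument then splits by the type of $F$. When $F$ is a hole in $G^{N[s]}$, a careful analysis of which vertices of $F$ have been flipped (i.e.\ whose arcs cross the reference point associated with $s$) should force either an $\otimes$ graph with total path length $\ell \ge 4$ or a graph $\overline{S_{k}^{+}}$; the combinatorial content is that consecutive edges of the hole encode the alternation between gadgets and paths in the $\otimes$ construction. When $F$ is one of the small non-interval graphs---the Lekkerkerker--Boland obstructions (long claw, whipping top, net, $\ddag$) together with the finitely many small configurations specific to the $C_{4}$-free characterization---a bounded case analysis over the possible preimages of each vertex of $F$ should yield one of the eight small forbidden subgraphs in Figure~\ref{fig:chordal-non-cag}, the long claw, or the whipping top$^\star$. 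The principal obstacle is the remaining four \emph{infinite} families of forbidden configurations from~\cite{cao-24-cag-ii-flipping}: a direct enumeration of preimages is hopeless. The plan there is to show that for every instance of each family, one can choose an induced subgraph $G' \subseteq G$ together with a simplicial vertex $s'$ of $G'$ such that $(G')^{N[s']}$ already contains either a hole or one of the finitely many small configurations already handled. This truncation step is the technical heart of the proof: for each of the four families, long gadget or path segments inside the configuration must be shown to be contractible without destroying the obstruction.

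To keep the case analysis uniform, I would isolate a handful of structural lemmas describing, for each forbidden configuration $F$ in $G^{N[s]}$, the possible intersection patterns of the vertices of $F$ with $N[s]$ and the possible ways a vertex of $F$ can arise as a flipped or unflipped arc. These lemmas are then reused across the hole case, the small-configuration case, and the truncation arguments for the infinite families. Once the truncation reduction is in place, matching the resulting small obstructions against the list in the theorem is routine; I expect the truncation step, rather than the subsequent matching, to absorb the bulk of the technical work.
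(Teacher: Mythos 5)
Your plan follows essentially the same route as the paper: reduce via the flipping characterization to forbidden configurations in $G^{N[s]}$, recover $\otimes$ graphs and $\overline{S_{k}^{+}}$ from holes, handle the small configurations by bounded case analysis, and dispose of the infinite families by passing to an induced subgraph $G'$ and a simplicial vertex $s'$ whose auxiliary graph already exhibits a hole or a previously treated small configuration. The paper implements your ``structural lemmas on intersection patterns'' as the witness machinery together with the Induction and Minimality assumptions, but the architecture is the same.
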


\begin{figure}[ht]
  \centering \small
  \tikzstyle{region number}  = [{circle, draw, fill=black, text=white, inner sep=2pt}]  
  
\def\xpos{5}
  \def\southwest{(\xpos, .5)}
  \def\corewidth{4}
  \def\coreheight{3.5}
  \def\extrawidth{1.5}
  \def\extraheight{.5}
\def\xoffset{2.5}
  \def\southeast{\southwest++(\xoffset, -.5)}

  \def\firstregion{\southeast++({-\corewidth}, 0) rectangle ++(9, 1.5)}  \def\secondregion{\southeast rectangle ++(6, 2.5)}  

  \begin{tikzpicture}[very thick,  text opacity=1]
    \foreach[count =\i from 0] \g in {split, chordal,~$C_4$-free} {
      \pgfmathparse{100/(3-\i)} 
      \draw[rounded corners, black!\pgfmathresult] \southwest rectangle ++(\corewidth+\extrawidth*\i, \coreheight+\extraheight*\i)
      node[anchor=north east, text=black]  {\g};
    }
    \foreach[count =\i from 0] \g in {{Helly\\ circular-arc}, circular-arc} {
      \pgfmathparse{100/(2-\i)} 
      \draw[rounded corners, black!\pgfmathresult] \southeast++({(\i-1)/2}, {1.5*(1-\i)}) rectangle ++({-\corewidth-\extrawidth*(\i)}, {\coreheight*(.93+.6*\i)})
node[anchor=north west, text=black, align=left]  {\g};
    }
    \begin{scope}[very thick, fill opacity=0.4, text opacity=1]
    \filldraw[red, rounded corners] \secondregion;
    \filldraw[blue, rounded corners] \firstregion;
    \end{scope}

    \foreach \i in {2, ..., 7} 
    \node[region number] at ({\xpos + (\xoffset+\corewidth+\extrawidth*int(\i/2-1)*2) /2 }, {Mod(\i, 2) + 1}) {\i};
    \foreach \i/\pos in {0/{({\xpos + \xoffset/2}, 1)}, 1/{({\xpos - (\corewidth-\xoffset)/2}, .5)}, 8/{({\xpos + \corewidth+ \extrawidth*.5 }, {.25})}, 9/{({\xpos + \corewidth+ \extrawidth*2.5 }, {1.5})}}
    \node[region number] at \pos {\i};
  \end{tikzpicture}
  \caption{The Venn diagram of the graph classes.  The blue and red areas consists of \textit{minimal} forbidden induced subgraphs of the class of Helly circular-arc graphs and the class of circular-arc graphs, respectively.  Note that every minimal~$C_{4}$-free circular-arc graph that is not a Helly circular-arc graph is a split graph~\cite{joeris-11-hcag}.
  }
  \label{fig:venn-diagram}
\end{figure}

We also characterize chordal graphs that are not Helly circular-arc graphs.
We use the Venn diagram in Figure~\ref{fig:venn-diagram} to illustrate the relationship of these classes.
Theorem~\ref{thm:main} summarizes all graphs in regions~2--5, and we can put them into the regions.
Note that all~$\otimes$ graphs are in Regions~2 and~4 by Proposition~\ref{lem:o-graphs}, and a chordal forbidden induced subgraph of circular-arc graphs is in region~3 or~5 if and only if it contains an induced copy of some graph in region~0.
Joeris et al.~\cite{joeris-11-hcag} showed that region~0 comprises $\overline{S_{k}}$ for all~$k\ge 3$, and we~\cite{cao-24-split-cag} have previously charted regions~2 and~3: Figure~\ref{fig:net-star}--\ref{fig:whipping-top-derived},~$\overline{S_{k}^{+}}, k \ge 3$, or~$\otimes(a, b)$ with~$1 \le b \le 2 \le a$.
Interestingly, region~5 comprises a single graph.

\begin{corollary}\label{cor:regions}
  Region~4 comprises long claw, whipping top$^\star$, the graph in Figure~\ref{fig:(S4+)-e},~$\otimes(a, b), b \ge 3$, and~$\otimes(a_{0}, a_{1}, \ldots, a_{2 p - 1})$ with~$p \ge 2$.
  Region~5 comprises only the graph in Figure~\ref{fig:extended-net}.
\end{corollary}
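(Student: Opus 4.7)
My plan is to combine the three available ingredients---Theorem~\ref{thm:main}, Theorem~\ref{thm:split}, and Proposition~\ref{lem:o-graphs}---to sort the chordal-but-not-split minimal forbidden induced subgraphs of circular-arc graphs into Regions~4 and~5. A chordal minimal forbidden induced subgraph of circular-arc graphs is split precisely when it also appears in Theorem~\ref{thm:split}'s list, so subtracting that list from Theorem~\ref{thm:main}'s leaves six families of candidates: long claw, whipping top$^\star$, the graphs in Figures~\ref{fig:extended-net} and~\ref{fig:(S4+)-e}, every $\otimes(a,b)$ with $b\ge 3$, and every $\otimes$ graph with $p\ge 2$. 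The first step is to verify each is genuinely non-split: the long claw has diameter four, whipping top$^\star$ has an induced $P_5$, the two figure graphs are marked non-split in the caption of Figure~\ref{fig:chordal-non-cag}, and each remaining $\otimes$ graph fails splitness because the $w$-vertices of a gadget together with an interior path vertex are forced into the independent side and then violate independence.

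The second step separates Regions~4 and~5 via the criterion ``every proper induced subgraph is a Helly circular-arc graph.''  For all $\otimes$ graphs this is exactly what Proposition~\ref{lem:o-graphs} asserts, so they all land in Region~4.  The long claw is a minimal non-interval graph by Theorem~\ref{thm:lb}, so its proper induced subgraphs are interval (hence Helly), placing it in Region~4 as well.  For whipping top$^\star$ and the graph in Figure~\ref{fig:(S4+)-e}, I would invoke the Joeris et al.\ characterization that within chordal circular-arc graphs the Helly property is equivalent to forbidding $\overline{S_{k}}$ for all $k\ge 3$; since their proper induced subgraphs are already chordal circular-arc by the minimality of Theorem~\ref{thm:main}, it suffices to confirm that neither graph contains an induced $\overline{S_{k}}$ with $k\ge 3$.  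Both graphs have fewer than ten vertices, so this is a bounded mechanical check; both then end up in Region~4.

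The third step handles Region~5.  The graph in Figure~\ref{fig:extended-net} is the net with an extra vertex attached to one of the net's pendants; deleting this extra vertex recovers the net, which is $\overline{S_{3}}$ and is not Helly circular-arc.  Hence this graph has a proper induced subgraph outside the Helly class, so it is not a minimal Helly-obstruction and belongs in Region~5 rather than Region~4.  Since every other non-split candidate was placed in Region~4, Figure~\ref{fig:extended-net} is the sole inhabitant of Region~5.  The main obstacle in this plan is the case analysis for whipping top$^\star$ and Figure~\ref{fig:(S4+)-e}---one has to rule out every potential $\overline{S_{k}}$ embedding---but everything else follows essentially immediately from the cited results, with Proposition~\ref{lem:o-graphs} carrying the entire load of the infinite $\otimes$ families.
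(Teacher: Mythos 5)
Your proposal is correct and follows essentially the same route the paper takes (implicitly, in the paragraph preceding the corollary): subtract the split list of Theorem~\ref{thm:split} from Theorem~\ref{thm:main}, place all $\otimes$ graphs in Region~4 via Proposition~\ref{lem:o-graphs}, and sort the remaining candidates by whether they contain a graph from Region~1 (an induced $\overline{S_{k}}$), which only the graph in Figure~\ref{fig:extended-net} does, via its induced net. The finite checks you defer (no induced $\overline{S_{k}}$ in whipping top$^\star$ or Figure~\ref{fig:(S4+)-e}) are of the same kind the paper also leaves to the reader.
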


Regions~0, 2, and 4 together are the minimal chordal graphs that are not Helly circular-arc graphs.
\begin{corollary}\label{cor:chorda-hcag}
  A chordal graph is a Helly circular-arc graph if and only if it does not contain an induced copy of long claw, whipping top$^\star$, a graph in Figures~\ref{fig:long-claw-derived}, \ref{fig:whipping-top-derived}, \ref{fig:(S4+)-e},~$\overline{S_{k}}, k \ge 3$, or an~$\otimes$ graph.
\end{corollary}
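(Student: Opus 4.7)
The plan is to deduce Corollary~\ref{cor:chorda-hcag} by combining Theorem~\ref{thm:main} with the theorem of Joeris et al.~\cite{joeris-11-hcag}, which says that a circular-arc graph is Helly if and only if it contains no induced $\overline{S_{k}}$ with $k\ge 3$. Within the chordal class, therefore, being a non-Helly circular-arc graph splits cleanly into two cases: either the graph is not circular-arc at all (covered by Theorem~\ref{thm:main}), or it is a chordal circular-arc graph containing some induced $\overline{S_{k}}$, $k\ge 3$ (covered by Joeris et al.).

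For the forward direction I would verify that each graph in the listed family is chordal but not a Helly circular-arc graph. The long claw, whipping top$^{\star}$, the graphs of Figures~\ref{fig:long-claw-derived}, \ref{fig:whipping-top-derived}, and~\ref{fig:(S4+)-e}, and every $\otimes$ graph are not even circular-arc by Theorem~\ref{thm:main} (with Proposition~\ref{lem:o-graphs} handling the $\otimes$ graphs), and they are chordal; hence they are not Helly circular-arc. The graphs $\overline{S_{k}}$, $k\ge 3$, are split and thus chordal, and they are not Helly circular-arc by Joeris et al.

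For the reverse direction I would take a chordal graph $G$ that is not a Helly circular-arc graph and case on whether $G$ is circular-arc. If $G$ is circular-arc, Joeris et al.~directly supplies an induced $\overline{S_{k}}$ with $k\ge 3$. Otherwise Theorem~\ref{thm:main} produces an induced subgraph $H$ from its own list, and the task reduces to checking that either $H$ is already one of the graphs listed in Corollary~\ref{cor:chorda-hcag} or else $H$ itself contains an induced $\overline{S_{k}}$ with $k\ge 3$. The verifications are graph-by-graph inspections: Figure~\ref{fig:net-star} is literally $\overline{S_{3}}$ plus an isolated vertex; Figure~\ref{fig:the-weird} contains $\overline{S_{4}}$ on the eight vertices other than its apex $c$; each $\overline{S_{k}^{+}}$ with $k\ge 3$ contains $\overline{S_{k}}$ by deleting the added apex; and Figure~\ref{fig:extended-net} is obtained from the net$^{\star}$ of Figure~\ref{fig:net-star} by adding one edge, so the six vertices of the net remain inducing $\overline{S_{3}}$.

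There is no real obstacle once Theorem~\ref{thm:main} is in hand; the argument is essentially bookkeeping, matching each ``extra'' chordal circular-arc forbidden subgraph with a smaller $\overline{S_{k}}$ it contains. The only mild care is needed for Figure~\ref{fig:the-weird}, where one verifies that deleting the apex $c$ recovers a full copy of $\overline{S_{4}}$ rather than an edge-deficient version of it; this is a direct reading of the drawing.
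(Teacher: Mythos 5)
Your proof is correct and is essentially the approach the paper intends: the corollary is derived implicitly from Theorem~\ref{thm:main} together with the Joeris et al.\ identification of region~1 of Figure~\ref{fig:venn-diagram} with the graphs $\overline{S_{k}}$, plus exactly the containment checks you perform (net$^\star$ and the graph of Figure~\ref{fig:extended-net} contain $\overline{S_{3}}$, the graph of Figure~\ref{fig:the-weird} minus its apex is $\overline{S_{4}}$, and $\overline{S_{k}^{+}}$ contains $\overline{S_{k}}$). One small caution: the induced-$\overline{S_{k}}$ form of the Joeris et al.\ theorem holds for $C_{4}$-free (in particular chordal) circular-arc graphs rather than for circular-arc graphs in general, but that restricted setting is the only one in which you invoke it.
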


Let us put our work into context.  By imposing restrictions on the intersection pattern between arcs, more than a dozen subclasses of circular-arc graphs have been defined and studied in the literature~\cite{lin-09-cag-and-subclasses}.
Several of them have been characterized by forbidden induced subgraphs.
They are mostly in the lower levels of the class hierarchy.
The class of chordal circular-arc graphs is the second subclass that contains all interval graphs, and the only previous one was normal Helly circular-arc graphs~\cite{cao-17-nhcag}.  The characterizations of both subclasses are achieved through connecting the forbidden induced subgraphs and minimal non-interval graphs.

\section{McConnell flipping}\label{sec:pre}

Let~$G$ be a chordal graph.
For each simplicial vertex~$s$ of~$G$, we can use the clique~$N[s]$ to define the auxiliary graph~$G^{N[s]}$~\cite{cao-24-split-cag}.
We use~$G^{s}$ as a shorthand for~$G^{N[s]}$.
The vertex set of~$G^{s}$ is ~$V(G)$, and the edge set is defined as follows.\footnote{In~\cite{cao-24-cag-ii-flipping} the auxiliary graph is defined for any clique~$K$ of~$G$, hence denoted by~$G^{K}$.  In the graph $G^{K}$, two vertices~$u, v\in K$ might be adjacent because~$u v$ is an edge of a~$C_{4}$ in~$G$.  The two definitions are equivalent on~chordal graphs.}
\begin{itemize}
\item The edges among vertices in~$V(G)\setminus N_{G}[s]$ are the same as in~$G$.
\item A pair of vertices~$u, v\in N_{G}[s]$ are adjacent in~$G^{s}$ if there exists a vertex adjacent to neither of them, i.e.,~$N_{G}(u)\cup N_{G}(v)\ne V(G)$.
\item A pair of vertices~$u\in N_{G}[s]$ and~$v\in V(G)\setminus N_{G}[s]$ are adjacent in~$G^{s}$ if~$N_{G}[v]\not\subseteq N_{G}[u]$.
\end{itemize}
Two quick remarks on the conditions are in order.
First, note that~$N(u)\cup N(v) = V(G)$ if and only if~$N[u]\cup N[v] = V(G)$ and~$uv\in E(G)$.
Second,~$N_{G}[v]\not\subseteq N_{G}[u]$ if and only if either they are not adjacent, or there exists a vertex adjacent to~$v$ but not~$u$ in~$G$.
Throughout this paper we will use the same notation as Figure~\ref{fig:simplified-forbidden-configurations} when illustrating subgraphs of~$G^{s}$.

Having two or more graphs on the same vertex set demands caution when we talk about adjacencies.  The only exception is the adjacency between a pair of vertices in~$V(G)\setminus N_{G}[s]$.  Since such a pair has the same adjacency in~$G$ and~$G^{s}$, we omit specifying the graph to avoid unnecessary clumsiness.

When~$G$ is connected, a forbidden configuration never involves all the vertices in~$V(G)\setminus\{s\}$.
For example, the subgraph of~$G$ induced by the four vertices in Figure~\ref{fig:claw-simplified} is also a claw, hence a circular-arc graph.  As a matter of fact, any graph on four vertices is a circular-arc graph.
The main challenge is to take into account the invisible vertices in the forbidden configurations.\footnote{This footnote is intended for the readers who are familiar with~\cite{cao-24-split-cag}, and can be safely skipped otherwise.
Although the task here is similar to the previous work~\cite{cao-24-split-cag}, the situation is far more complicated.
  Recall that the previous work was concerned with a split graph~$G$ with a unique split partition~$K\uplus I$.
First, the general strategy in~\cite{cao-24-split-cag} is to use the fact that~$K\setminus N[s]$ is a clique in both~$G$ and~$G^{N[s]}$.  Since no forbidden configuration contains a clique on five vertices, we can enumerate all possible constitutions of each forbidden configuration in~\cite{cao-24-split-cag}.  In the present paper, we cannot bound the size of~$N(s)$ or~$V(G)\setminus N[s]$ in a forbidden configuration.  In a hole of length~$\ell$ in~$G^{N[s]}$, for example, there are~$2^{\ell} - 1$ possible constitutions (the only exception is when the hole does not have any vertex from~$N_{G}(s)$).
Second, while only edges \textit{among}~$N[s]$ need witnesses in~\cite{cao-24-split-cag}, all edges \textit{incident to}~$N[s]$ do in the present paper.  To make it worse, witnesses in the present paper are not necessarily simplicial vertices of~$G$, and the connections between witnesses and the forbidden configuration and among witnesses themselves demand an extensive case analysis.
The witnesses of edges between~$N_G(s)$ and~$V(G)\setminus N_G[s]$ are especially troublesome.
The main challenge of the present paper is to decrease the number of cases by reductions.
}
\begin{definition}[Collateral edge]
  An edge of~$G^s$ is \emph{collateral} if it is an egde of~$G$ and at least one of its ends is in~$N_{G}[s]$.
\end{definition}
Note that no edge among~$V(G)\setminus N_{G}[s]$ is collateral.
If one end~$v$ of an edge is in~$N_{G}[s]$, then the edge is not collateral if and only if the other end is adjacent to neither~$s$ nor~$v$ in~$G$.
Let~$v_{1} v_{2}$ be a collateral edge.
By construction there must be a vertex adjacent to neither of them when~$v_{1}, v_{2}\in N_{G}(s)$, or a vertex in~$N_{G}(v_{2})\setminus N_{G}[v_{1}]$ when only~$v_{1}$ is in~$N_{G}(s)$.
In other words, there is always a vertex~$w\in V(G)\setminus N_G[s]$ such that~$w v_{i} \in E(G), i = 1, 2$, if and only if~$v_{i}\not\in N_G(s)$.
The vertex~$w$ can be viewed as a ``witness'' of this edge.
We can generalize the concept of witnesses to any clique~$K$ of~$G^{s}$.
\begin{definition}[Witness]
  Let~$K$ be a clique of~$G^{s}$.
  A vertex~$w\in V(G)\setminus N_{G}[s]$ is a \emph{witness} of~$K$ (in~$G^{s}$) if~$N_{G}[w]\cap K$ and~$N_{G}[s]\cap K$ partition~$K$.
  When~$K$ comprises the two ends of an edge of~$G^{s}$, we say that~$w$ is a witness of the edge (in~$G^{s}$).
  The clique or edge is then \emph{witnessed by~$w$} (in~$G^{s}$).
\end{definition}
Let~$w$ be a witness of a clique~$K$.
By definition, no edge between~$w$ and~$K$ is collateral.
We remark that a witness of a clique can be from this clique.
Indeed, if~$w\not\in K$, then~$w$ is a witness of~$K\cup \{w\}$, which is a clique of~$G^{s}$ by the definition above.
Another remark is that every collateral edge needs a witness, but a witnessed edge may or may not be collateral.
Table~\ref{tbl:inhomogeneous-triangles} lists some simple examples.

\begin{table}[ht!] 
  \caption{Connected induced subgraphs of~$G^{s}$ of order three and the corresponding structures in~$G$.
  The vertex~$w$ serves as a witness for the triple of vertices, and the vertex~$w_{i}, i = 0, 1, 2$, as a witness for the edge between~$\{v_{0}, v_{1}, v_{2}\}\setminus \{v_{i}\}$.
  }
  \label{tbl:inhomogeneous-triangles}
\tikzset{
  b-vertex/.style = {fill=RubineRed, diamond, draw=blue, inner sep=1.2pt},
  a-vertex/.style = {draw=blue, fill=cyan, inner sep=1.5pt},
  witnessed edge/.style = {ultra thick, teal}
}
  \begin{center}
    \small
    \begin{tabular}{ c | *{2}{c|} *{2}{c}| *{1}{c}| *{2}{c} } 
      \toprule
      $G^{s}$ 
      & \begin{tikzpicture}[scale=.62, baseline=({0, 0})]\small
        \node[b-vertex, "$v_{0}$"] at ({90}:1) (v) {};
        \foreach \i in {1, 2} {
          \node[a-vertex, "$v_{\i}$" below] at ({90 + 120*\i}:1) (u\i) {};
          \draw[witnessed edge] (v) -- (u\i);
        }
      \end{tikzpicture}
      &    \begin{tikzpicture}[scale=.62, baseline=({0, 0})]\small
        \foreach \i in {1, 2} 
        \node[a-vertex, "$v_{\i}$" below] at ({90 + 120*\i}:1) (u\i) {};
        \node[b-vertex, "$v_{0}$"] at ({90}:1) (v) {};
        \draw[witnessed edge] (v) -- (u1);
        \draw (v) -- (u2) -- (u1);
      \end{tikzpicture}
      &
        \multicolumn{2}{c|}{ \begin{tikzpicture}[scale=.62, baseline=({0, 0})]\small
            \node[b-vertex, "$v_{0}$"] at ({90}:1) (v) {};
            \foreach \i in {1, 2} {
              \node[a-vertex, "$v_{\i}$" below] at ({90 + 120*\i}:1) (u\i) {};
              \draw[witnessed edge] (v) -- (u\i);
            }
            \draw (u2) -- (u1);
          \end{tikzpicture}
        }
      &    \begin{tikzpicture}[scale=.62, baseline=({0, 0})]\small
        \node[a-vertex, "$v_{0}$"] at ({90}:1) (v) {};
        \foreach \i in {1, 2} {
          \node[a-vertex, "$v_{\i}$" below] at ({90 + 120*\i}:1) (u\i) {};
          \draw (v) -- (u\i);
        }
      \end{tikzpicture}
      &
        \multicolumn{2}{c}{
        \begin{tikzpicture}[scale=.62, baseline=({0, 0})]\small
          \node[a-vertex, "$v_{0}$"] at ({90}:1) (v) {};
          \foreach \i in {1, 2} {
            \node[a-vertex, "$v_{\i}$" below] at ({90 + 120*\i}:1) (u\i) {};
            \draw (v) -- (u\i);
          }
          \draw (u1) -- (u2);
        \end{tikzpicture}
        }
      \\
      \midrule
      $G$ 
      &    \begin{tikzpicture}[scale=.62, baseline=({0, 0})]\small
        \node[empty vertex, "$v_{0}$"] at ({90}:1) (v) {};
        \foreach \i in {1, 2} {
          \node[empty vertex, "$v_{\i}$" below] at ({90 + 120*\i}:1) (u\i) {};
          \draw (v) -- (u\i) -- ++(0, 1.) node[empty vertex, "$w_{\i}$"] {} -- (v);
        }
        \draw (u1) -- (u2);
      \end{tikzpicture}
      &    \begin{tikzpicture}[scale=.62, baseline=({0, 0})]\small
        \node[empty vertex, "$v_{0}$"] at ({90}:1) (v) {};
        \draw (v) -- ++(.75, 0) node[empty vertex, "$w$"] {};
        \foreach \i in {1, 2} {
          \node[empty vertex, "$v_{\i}$" below] at ({90 + 120*\i}:1) (u\i) {};
        }
        \draw (v) -- (u1);
        \draw (u2) -- (u1);
      \end{tikzpicture}
      &    \begin{tikzpicture}[scale=.62, baseline=({0, 0})]\small
        \node[empty vertex, "$v_{0}$"] at ({90}:1) (v) {};
        \draw (v) -- ++(.75, 0) node[empty vertex, "$w$"] {};
        \foreach \i in {1, 2} {
          \node[empty vertex, "$v_{\i}$" below] at ({90 + 120*\i}:1) (u\i) {};
          \draw (v) -- (u\i);
        }
        \draw (u2) -- (u1);
      \end{tikzpicture}
      &    \begin{tikzpicture}[scale=.62, baseline=({0, 0})]\small
        \node[empty vertex, "$v_{0}$"] at ({90}:1) (v) {};
        \foreach \i in {1, 2} {
          \node[empty vertex, "$v_{\i}$" below] at ({90 + 120*\i}:1) (u\i) {};
          \draw (v) -- (u\i) -- ++(0, 1.) node[empty vertex, "$w_{\i}$"] {} -- (v);
        }
        \node[empty vertex, "$w_{0}$" yshift=-18pt] at (90:.3) (w) {};
        \draw (u2) -- (u1);
      \end{tikzpicture}
      &    \begin{tikzpicture}[scale=.62, baseline=({0, 0})]\small
        \node[empty vertex, "$v_{0}$"] at ({90}:1) (v) {};
        \foreach \i in {1, 2} {
          \node[empty vertex, "$v_{\i}$" below] at ({90 + 120*\i}:1) (u\i) {};
          \draw (v) -- (u\i) -- ++(0, 1.) node[empty vertex, "$w_{\i}$"] {};
        }
        \draw (u1) -- (u2);
      \end{tikzpicture}
      &    \begin{tikzpicture}[scale=.62, baseline=({0, 0})]\small
        \node[empty vertex, "$v_{0}$"] at ({90}:1) (v) {};
        \foreach \i in {1, 2} {
          \node[empty vertex, "$v_{\i}$" below] at ({90 + 120*\i}:1) (u\i) {};
          \draw (v) -- (u\i);
        }
        \node[empty vertex, "$w$" yshift=-18pt] at (90:.3) (w) {};
        \draw (u1) -- (u2);
      \end{tikzpicture}
      &    \begin{tikzpicture}[scale=.62, baseline=({0, 0})]\small
        \node[empty vertex, "$v_{0}$"] at ({90}:1) (v) {};
        \foreach \i in {1, 2} {
          \node[empty vertex, "$v_{\i}$" below] at ({90 + 120*\i}:1) (u\i) {};
          \draw (v) -- (u\i) -- ++(0, 1.) node[empty vertex, "$w_{\i}$"] {};
        }
        \draw (v) -- ++(0, -.8) node[empty vertex, "$w_{0}$" yshift=-18pt] {};        
        \draw (u1) -- (u2);
      \end{tikzpicture}
      \\ 
      \bottomrule
    \end{tabular}

    \begin{tabular}{ c | c| *{2}{c}| c| *{2}{c}}
      \toprule
      $G^{s}$ 
      &     \begin{tikzpicture}[scale=.62, baseline=({0, 0})]\small
        \node[a-vertex, "$v_{0}$"] at ({90}:1) (v) {};
        \foreach \i in {1, 2} {
          \node[b-vertex, "$v_{\i}$" below] at ({90 + 120*\i}:1) (u\i) {};
          \draw[witnessed edge] (v) -- (u\i);
        }
      \end{tikzpicture}
      &    \multicolumn{2}{c|}{ \begin{tikzpicture}[scale=.62, baseline=({0, 0})]\small
          \node[a-vertex, "$v_{0}$"] at ({90}:1) (v) {};
          \foreach \i in {1, 2} {
            \node[b-vertex, "$v_{\i}$" below] at ({90 + 120*\i}:1) (u\i) {};
          }
          \draw[witnessed edge] (v) -- (u1);
          \draw (v) -- (u2);
        \end{tikzpicture}
        }
      &     \begin{tikzpicture}[scale=.62, baseline=({0, 0})]\small
        \node[a-vertex, "$v_{0}$"] at ({90}:1) (v) {};
        \foreach \i in {1, 2} {
          \node[b-vertex, "$v_{\i}$" below] at ({90 + 120*\i}:1) (u\i) {};
        }
        \draw[witnessed edge] (v) -- (u1);
        \draw (v) -- (u2) -- (u1);
      \end{tikzpicture}
      &    \multicolumn{2}{c}{\begin{tikzpicture}[scale=.62, baseline=({0, 0})]\small
          \node[a-vertex, "$v_{0}$"] at ({90}:1) (v) {};
          \foreach \i in {1, 2} {
            \node[b-vertex, "$v_{\i}$" below] at ({90 + 120*\i}:1) (u\i) {};
            \draw[witnessed edge] (v) -- (u\i);
          }
          \draw (u1) -- (u2);
        \end{tikzpicture}
        }
      \\
      \midrule
      $G$ 
      & \begin{tikzpicture}[scale=.62, baseline=({0, 0})]\small
        \node[empty vertex, "$v_{0}$"] at ({90}:1) (v) {};
        \foreach \i in {1, 2} {
          \node[empty vertex, "$v_{\i}$" below] at ({90 + 120*\i}:1) (u\i) {};
          \draw (v) -- (u\i) -- ++(0, 1.) node[empty vertex, "$w_{\the\numexpr3-\i\relax}$"] {};
        }
      \end{tikzpicture}
      & \begin{tikzpicture}[scale=.62, baseline=({0, 0})]\small
        \node[empty vertex, "$v_{0}$"] at ({90}:1) (v) {};
        \foreach \i in {1, 2} {
          \node[empty vertex, "$v_{\i}$" below] at ({90 + 120*\i}:1) (u\i) {};
        }
        \draw (v) -- (u1) -- ++(0, 1.) node[empty vertex, "$w_{2}$"] {};
      \end{tikzpicture}
      & \begin{tikzpicture}[scale=.62, baseline=({0, 0})]\small
        \node[empty vertex, "$v_{0}$"] at ({90}:1) (v) {};
        \foreach \i in {1, 2} {
          \node[empty vertex, "$v_{\i}$" below] at ({90 + 120*\i}:1) (u\i) {};
        }
        \draw (v) -- (u1) -- (0, 0) node[empty vertex, "$w_{2}$" yshift=-2pt] {} -- (u2);
      \end{tikzpicture}
      & \begin{tikzpicture}[scale=.62, baseline=({0, 0})]\small
        \node[empty vertex, "$v_{0}$"] at ({90}:1) (v) {};
        \foreach \i in {1, 2} {
          \node[empty vertex, "$v_{\i}$" below] at ({90 + 120*\i}:1) (u\i) {};
        }
        \draw (v) -- (u1) -- (u2);
      \end{tikzpicture}
      &    \begin{tikzpicture}[scale=.62, baseline=({0, 0})]\small
        \node[empty vertex, "$v_{0}$"] at ({90}:1) (v) {};
        \foreach \i in {1, 2} {
          \node[empty vertex, "$v_{\i}$" below] at ({90 + 120*\i}:1) (u\i) {};
          \draw (v) -- (u\i) -- ++(0, 1.) node[empty vertex, "$w_{\the\numexpr3-\i\relax}$"] {};
        }
        \draw (u1) -- (u2);
      \end{tikzpicture}
      &    \begin{tikzpicture}[scale=.62, baseline=({0, 0})]\small
        \node[empty vertex, "$v_{0}$"] at ({90}:1) (v) {};
        \node[empty vertex, "$w$" yshift=-2pt] at ({-90}:.1) (w) {};
        \foreach \i in {1, 2} {
          \node[empty vertex, "$v_{\i}$" below] at ({90 + 120*\i}:1) (u\i) {};
          \draw (v) -- (u\i) -- (w);
        }
        \draw (u1) -- (u2);
      \end{tikzpicture}
      \\ 
      \bottomrule
    \end{tabular}
  \end{center}
\end{table}

As said, when dealing with a forbidden configuration~$F$, we need to take witnesses of the collateral edges within~$F$ into consideration.
A witness of one particular collateral edge might be adjacent to additional vertices in~$F$, and witnesses of distinct collateral edges could either be adjacent or not.
These possibilities pose significant challenges in our analysis.
To mitigate these complexities, we will introduce a series of observations.
The first of them is quite intuitive; see the fifth group of Table~\ref{tbl:inhomogeneous-triangles}.

\begin{lemma}\label{lem:witness-N[s]}
  Let~$K$ be a clique of~$G^{s}$.
  If~$K\subseteq N_{G}(s)$ and~$K$ is not witnessed, then~$G$ contains an induced~$\overline{S_{3}^{+}}$.
\end{lemma}
\begin{proof}
  Let~$K'$ be a minimal subset of~$K$ that is unwitnessed.
  Note that~$|K'| > 2$: by assumption,~$G$ does not contain a universal vertex; a clique of two vertices are the endpoints of an edge and hence has a witness by construction.
  We take three vertices~$v_{0}$,~$v_{1}$, and~$v_{2}$ from~$K'$.
  By the selection of~$K'$, for each~$i = 0, 1, 2$, the clique~$K'\setminus \{v_{i}\}$ has a witness~$w_{i}$.
Since~$w_{i}$ is not a witness of~$K'$, it is in~$N_{G}(v_{i})$.  Thus,~$N_{G}(w_{i})\cap \{v_{0}, v_{1}, v_{2}\} = \{v_{i}\}$.
If~$w_{1}$ and~$w_{2}$ are adjacent, then~$v_{1}w_{1} w_{2}v_{2}$ is a hole of~$G$, which is impossible.
  For the same reason,~$w_{0}$ is not adjacent to~$w_{1}$ or~$w_{2}$.
  Then~$G[\{v_{0}, v_{1}, v_{2}, w_{0}, w_{1}, w_{2}\}]$ is an induced net (see the second graph of the fifth group of Table~\ref{tbl:inhomogeneous-triangles}), which forms an~$\overline{S_{3}^{+}}$ with~$s$.
\end{proof}

 \begin{figure}[ht]
  \centering \small
  \begin{subfigure}[b]{0.25\linewidth}
    \centering
    \begin{tikzpicture}[scale=.5]
      \foreach[count =\j] \i in {1, 2, 3} 
        \draw ({120*\i-90}:1) -- ({120*\i+30}:1) -- ({120*\i-30}:2) -- ({120*\i-90}:1);
      \foreach[count =\j] \i/\t in {1/a-, 2/a-, 3/b-} {
        \node[empty vertex] (u\i) at ({120*\i-30}:2) {};
        \node[empty vertex] (v\i) at ({120*\i-90}:1) {};
      }
      \foreach[count =\j] \i in {1, 2}
      \node at ({270 - 120*\j}:1.6) {$v_{\i}$};
      
      \node at ({90}:2.6) {$s$};
      \draw (u3) -- ++(1.5, 0) node[empty vertex] (x) {};
      \foreach[count =\i from 3] \v in {u2, v3, u3, x}
      \node[below = 1.5pt of \v] {$v_{\i}$};      
    \end{tikzpicture}
    \caption{}
  \end{subfigure}
  \,
  \begin{subfigure}[b]{0.25\linewidth}
    \centering
    \begin{tikzpicture}[xscale=.75]
      \draw (1, 0) -- (4, 0);
      \foreach \i in {1, 2} 
      \node[a-vertex, "$v_\i$"] (v\i) at (3.5-\i, 1) {};
      \foreach[count=\j from 3] \i in {1, ..., 4} {
        \node[b-vertex, "$v_\j$" below] (x\i) at (\i, 0) {};
        \draw (v2) -- (x\i);
      }
      \draw (v2) -- (v1);
      \foreach \i in {2, 3, 4} 
      \draw (v1) -- (x\i);
      \draw[witnessed edge] (x2) -- (v2) -- (x3) (v1) -- (x2);
      
      \node[empty vertex, "$s$"] (v4) at (4, 1) {};
      \foreach \i in {0, ..., 3}
      \draw (v4) -- +({180+\i*20}:.5);
    \end{tikzpicture}
    \caption{}
  \end{subfigure}
  \caption{(a) A graph~$G$ and (b)~$G^{s}$, where the edges incident to~$s$, which is universal, are omitted for clarity.
    The clique~$\{v_{1}, v_{2}, v_{4}\}$ has no witness, and
    each pair in it has a unique witness, namely, $v_{3}$, $v_{5}$, and~$v_{6}$.
    Note that~$v_{5}$ is not simplicial in~$G$.}
  \label{fig:no-witness}
\end{figure}

As shown in Figure~\ref{fig:no-witness}, Lemma~\ref{lem:witness-N[s]} does not hold for cliques of~$G^{s}$ intersecting~$V(G)\setminus N_G[s]$; see also groups~3 and~9 of Table~\ref{tbl:inhomogeneous-triangles}.
The next three propositions are on witnesses of edges and cliques involving vertices in~$V(G)\setminus N_{G}[s]$.  The first is about vertices in~$N_{G}[s]$ and their non-neighbors in~$G$.

\begin{proposition}[\cite{cao-24-cag-ii-flipping}]\label{lem:nonadjacent->dominate}
  Let~$v$ be a vertex in~$N_G[s]$.  For each vertex~$x\in V(G)\setminus N_{G}[v]$, it holds~$N_{G^{s}}[x]\subseteq N_{G^{s}}[v]$.
\end{proposition}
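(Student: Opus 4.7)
The plan is to unpack the definition of $G^{s}$ case by case and show that every $y \in N_{G^{s}}[x]$ also lies in $N_{G^{s}}[v]$. I first observe that $x \notin N_{G}[s]$: since $N_{G}[s]$ is a clique containing~$v$, any vertex of~$N_{G}[s]\setminus\{v\}$ is adjacent to~$v$ in~$G$, so $x\in V(G)\setminus N_{G}[v]$ forces $x\in V(G)\setminus N_{G}[s]$. Then I fix $y\in N_{G^{s}}[x]$; the cases $y=x$ and $y=v$ reduce trivially to the edge rule between $N_{G}[s]$ and $V(G)\setminus N_{G}[s]$ applied to $\{x,v\}$, using that $x\in N_{G}[x]\setminus N_{G}[v]$ to conclude $N_{G}[x]\not\subseteq N_{G}[v]$ and hence $xv\in E(G^{s})$.

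Suppose $y\neq x,v$. If $y\in V(G)\setminus N_{G}[s]$, then $xy\in E(G)$ because adjacencies outside~$N_{G}[s]$ are preserved, so $x\in N_{G}[y]\setminus N_{G}[v]$, giving $N_{G}[y]\not\subseteq N_{G}[v]$, and therefore $vy\in E(G^{s})$. If $y\in N_{G}[s]$, then both $v$ and~$y$ lie in~$N_{G}[s]$, so I must produce a witness, i.e., a vertex adjacent to neither~$v$ nor~$y$ in~$G$, to conclude $vy\in E(G^{s})$. The hypothesis $xy\in E(G^{s})$ together with $x\in V(G)\setminus N_{G}[s]$ provides some $z\in N_{G}[x]\setminus N_{G}[y]$. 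If $xy\notin E(G)$, then $z=x$ already works: $x\notin N_{G}[y]$ and $x\notin N_{G}[v]$.

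The only nontrivial subcase is $xy\in E(G)$ with $z\in N_{G}(x)\setminus N_{G}[y]$; here I expect the main obstacle to lie in ruling out $z\in N_{G}[v]$. I would handle it by invoking chordality: if $z\in N_{G}[v]$, then $z\neq v$ (else $zx\in E(G)$ contradicts $x\notin N_{G}[v]$), so $vz\in E(G)$, and the four vertices $v,y,x,z$ induce a $4$-cycle in~$G$ whose only possible chords are $vx$ and $yz$, both excluded by $x\notin N_{G}[v]$ and $z\notin N_{G}[y]$. This contradiction yields $z\notin N_{G}[v]$, so $z$ witnesses $N_{G}[v]\cup N_{G}[y]\neq V(G)$ and thus $vy\in E(G^{s})$. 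The rest of the argument is a routine unwinding of the three bullet points defining~$G^{s}$; chordality is invoked exactly once.
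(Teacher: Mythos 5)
Your proof is correct and takes essentially the same route as the paper's: in the only nontrivial case both arguments extract a witness $z\in N_{G}(x)\setminus N_{G}[y]$ for the collateral edge $xy$ and observe that $z\in N_{G}[v]$ would force the chordless $4$-cycle $v\,y\,x\,z$, so $z$ instead witnesses $vy\in E(G^{s})$. The paper frames this as a contradiction starting from $y\in N_{G^{s}}[x]\setminus N_{G^{s}}[v]$ while you argue directly by cases, but the content is identical.
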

\begin{proof}
   Note that~$x\not\in N_G[s]$ because~$N_G[s]\subseteq N_G[v]$ by the definition of simplicial vertices.
  Suppose for contradiction that there exists a vertex~$y\in N_{G^{s}}[x]\setminus N_{G^{s}}[v]$.
  Note that~$y\in N_{G}(s)\cap N_{G}(x)$; otherwise,~$v y$ must be an edge of~$G^{s}$, witnessed by~$x$.
  By construction, there must be a witness~$w$ of the edge~$x y$.
  Note that~$w\in N_{G}(v)$, as otherwise it witnesses the edge~$v y$.
  But then~$x w v y$ is a hole of~$G$, a contradiction.
\end{proof}
\begin{corollary}\label{lem:P4}
  The middle edge of an induced path of length three in~$G^{s}$ is always present in~$G$.
\end{corollary}
\begin{proof}
It follows from the construction of~$G^{s}$ if neither or both ends of this edge are from~$N_{G}(s)$, or Proposition~\ref{lem:nonadjacent->dominate} otherwise (each end has a private neighbor in~$G^{s}$).
\end{proof}

We consider next the subgraphs of~$G$ corresponding to connected induced subgraphs of~$G^{s}$ of order three.
It is trivial when none of the three vertices is from~$N_{G}(s)$.
Lemma~\ref{lem:witness-N[s]} has characterized the case when they are all from~$N_{G}(s)$ and form a clique in~$G^{s}$.
The next statement is about a triangle of~$G^{s}$ with two vertices from~$N_{G}(s)$; see the second and third groups in Table~\ref{tbl:inhomogeneous-triangles}.

\begin{proposition}\label{lem:unwitnessed}
  Let~$\{v_{0}, v_{1}, v_{2}\}$ be a clique of~$G^{s}$ with~$v_{1}, v_{2}\in N_G(s)$ and~$v_{0}\in V(G)\setminus N_G[s]$.
  If the clique~$\{v_{0}, v_{1}, v_{2}\}$ is not witnessed, then it is a clique of~$G$.
\end{proposition}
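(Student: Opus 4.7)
The plan is to split the goal into the three edges of the triangle. The edge $v_1 v_2$ is immediate, since $s$ is simplicial and so $N_G[s]$ is a clique of $G$. This reduces the task to showing that both $v_0 v_1$ and $v_0 v_2$ are edges of $G$. I would proceed by contradiction, assuming without loss of generality that $v_0 v_1 \notin E(G)$, and then splitting on whether $v_0 v_2 \in E(G)$.

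The easy subcase is $v_0 v_2 \notin E(G)$: then $v_0$ itself witnesses $\{v_0, v_1, v_2\}$, since $v_0 \in V(G)\setminus N_G[s]$ lies in $N_G[v_0]$ while neither $v_1$ nor $v_2$ does. This contradicts the hypothesis that the clique is unwitnessed. So the substantive subcase is $v_0 v_2 \in E(G)$. My plan here is to unpack the definition of $G^s$-adjacency on the edge $v_0 v_2$: because $v_0 \in N_G[v_2]$, the condition $N_G[v_0]\not\subseteq N_G[v_2]$ forces some vertex $y \in N_G(v_0)\setminus N_G[v_2]$, and this $y$ will serve as a witness candidate.

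From here I would verify two properties of $y$. First, $y \notin N_G[s]$: otherwise the clique $N_G[s]$ would contain both $y$ and $v_2$, putting $y v_2$ into $E(G)$ and contradicting the choice of $y$. Second, $y v_1$ cannot be a non-edge of $G$, for otherwise $y$ would already be a witness of $\{v_0, v_1, v_2\}$. Hence $y v_1 \in E(G)$. But then $v_0\, v_2\, v_1\, y\, v_0$ is a 4-cycle of $G$ whose only possible chords, $v_0 v_1$ and $y v_2$, are both excluded by assumption and by the choice of $y$ respectively. This contradicts the chordality of $G$.

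The only real obstacle is the subcase $v_0 v_2 \in E(G)$, and even there the argument is short: extract the witness candidate $y$ from the definitional reason that $v_0 v_2$ is an edge of $G^s$, then let chordality eliminate the final possibility. The distinctness of the four vertices of the forbidden 4-cycle is automatic, since $v_1, v_2 \in N_G(s)$ while $v_0, y \notin N_G[s]$, and $y \ne v_0$ because $y \in N_G(v_0)$.
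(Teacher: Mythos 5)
Your proof is correct and follows essentially the same route as the paper's: both arguments extract, from the $G^{s}$-adjacency of the one triangle edge that is present in $G$, a vertex in $N_G(v_0)$ avoiding one of $v_1,v_2$, observe that it must be adjacent to the other (else it witnesses the clique), and then rule out the missing edge via a $4$-hole. The only difference is cosmetic: you phrase it as a contradiction with the roles of $v_1$ and $v_2$ swapped, while the paper argues directly.
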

\begin{proof}
  Since~$v_{0}$ is not a witness of~$\{v_{0}, v_{1}, v_{2}\}$, it is adjacent to at least one of~$v_{1}$ and~$v_{2}$ in~$G$.
  Assume without loss of generality that~$v_{0}\in N_{G}(v_{1})$.
  By construction, there is a witness~$w_{2}$ of~$v_{0} v_{1}$; hence~$w_{2}\in N_{G}(v_{0})\setminus N_{G}[v_{1}]$. 
  Since~$w_{2}$ does not witness~$\{v_{0}, v_{1}, v_{2}\}$, it is adjacent to~$v_{2}$ in~$G$.
  Then~$v_{0}$ and~$v_{2}$ must be adjacent in~$G$ because it is chordal; otherwise,~$w_{2} v_{0} v_{1} v_{2}$ is a hole.
\end{proof}

The last observation is on two collateral edges with different witnesses sharing a vertex.

\begin{proposition}\label{lem:P3}
  Let~$v_{1} v_{0} v_{2}$ be a path of~$G^{s}$ such that both edges~$v_{0} v_{1}$ and~$v_{0} v_{2}$ are collateral and~$|\{v_{1}, v_{2}\}\cap N_{G}(s)| \ne 1$.
  \begin{enumerate}[i)]
  \item If~$v_{0} v_{1}$ and~$v_{0} v_{2}$ have a common witness, then~$v_{1} v_{2}\in E(G)$;
  \item otherwise, there is no edge between a witness of~$v_{0} v_{1}$ and a witness of~$v_{0} v_{2}$.
  \end{enumerate}
\end{proposition}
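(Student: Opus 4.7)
The plan is to work through a short case analysis based on the hypothesis $|\{v_{1},v_{2}\}\cap N_{G}(s)|\ne 1$. This leaves two cases: either both $v_{1},v_{2}\in N_{G}(s)$ (Case A) or both $v_{1},v_{2}\in V(G)\setminus N_{G}[s]$ (Case B). In each case, both parts close by finding a short induced cycle in $G$ and invoking chordality.

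\textit{Case A.} Here $v_{1}v_{2}\in E(G)$ for free because $N_{G}[s]$ is a clique, so (i) is immediate. For (ii), I would unfold the witness definition to show that $w_{1}$ witnessing $\{v_{0},v_{1}\}$ but not $\{v_{0},v_{2}\}$ forces $w_{1}v_{1}\notin E(G)$ and $w_{1}v_{2}\in E(G)$; symmetrically $w_{2}v_{2}\notin E(G)$ and $w_{2}v_{1}\in E(G)$. These deductions are carried out identically in the two sub-cases $v_{0}\in N_{G}(s)$ and $v_{0}\notin N_{G}[s]$; the only distinction is whether $w_{i}$ is adjacent to $v_{0}$ in $G$, which plays no further role. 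If additionally $w_{1}w_{2}\in E(G)$, then the four distinct vertices $v_{1},w_{2},w_{1},v_{2}$ form an induced $4$-cycle of $G$---the only potential chords are $v_{1}w_{1}$ and $v_{2}w_{2}$, both non-edges---contradicting chordality.

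\textit{Case B.} Here $v_{0}\in N_{G}(s)$ (otherwise neither $v_{0}v_{1}$ nor $v_{0}v_{2}$ would be collateral), and the path $v_{1}v_{0}v_{2}$ of $G^{s}$ together with $v_{1},v_{2}\notin N_{G}[s]$ forces $v_{1}v_{2}\notin E(G)$ directly from the construction of $G^{s}$. Unfolding the witness definition, any witness $w$ of $\{v_{0},v_{i}\}$ must satisfy $wv_{0}\notin E(G)$, and moreover $w\ne v_{i}$ (otherwise $v_{0}\in N_{G}[w]\cap N_{G}[s]$, violating the partition condition), whence $wv_{i}\in E(G)$. For (i), a common witness $w$ therefore satisfies $wv_{0}\notin E(G)$ and $wv_{1},wv_{2}\in E(G)$, producing the induced $4$-cycle $v_{0}v_{1}wv_{2}$ in $G$ (the potential chords $v_{0}w$ and $v_{1}v_{2}$ are absent); so the premise of (i) is in fact vacuous in this case. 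For (ii), the same analysis yields $w_{1}v_{1},w_{2}v_{2}\in E(G)$, and the failure-of-witness conditions give $w_{1}v_{2}\notin E(G)$ and $w_{2}v_{1}\notin E(G)$; then $w_{1}w_{2}\in E(G)$ would produce the induced $5$-cycle $v_{0}v_{1}w_{1}w_{2}v_{2}$ of $G$, whose five candidate chords are all non-edges, again a contradiction.

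The main obstacle is the bookkeeping required to translate the partition condition for witnesses into explicit edges and non-edges of $G$, in particular to rule out a witness coinciding with one of the $v_{i}$ and to verify that the exhibited short cycles genuinely have no chord. Once that bookkeeping is in place, chordality uniformly closes each sub-case through an induced $4$- or $5$-cycle.
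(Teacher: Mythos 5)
Your overall strategy matches the paper's: split on whether $\{v_{1},v_{2}\}\subseteq N_{G}(s)$ or $\{v_{1},v_{2}\}\cap N_{G}[s]=\emptyset$, unfold the witness definition into explicit edges and non-edges of~$G$, and close with chordality. Case~A is fine, and your bookkeeping there (including ruling out $w=v_{i}$) agrees with the paper's.

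There is, however, a genuine gap in Case~B. You assert that ``the path $v_{1}v_{0}v_{2}$ of $G^{s}$ together with $v_{1},v_{2}\notin N_{G}[s]$ forces $v_{1}v_{2}\notin E(G)$ directly from the construction of $G^{s}$.'' That only follows if the hypothesis ``$v_{1}v_{0}v_{2}$ is a path of $G^{s}$'' is read as an \emph{induced} path, i.e.\ $v_{1}v_{2}\notin E(G^{s})$. The proposition does not assume this, and the paper in fact applies it to non-induced paths: in the treatment of the sun (Lemma~3.8), part~(ii) is invoked on the path $v_{1}x_{0}v_{2}$ where $v_{1}$ and $v_{2}$ are adjacent degree-four vertices of the sun lying outside $N_{G}[s]$, so there $v_{1}v_{2}\in E(G)$. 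Under that scenario your argument breaks in both parts: in~(i) the hypothesis is not vacuous (though the conclusion is then trivially true, you never say so), and in~(ii) your candidate $5$-cycle $v_{0}v_{1}w_{1}w_{2}v_{2}$ has the chord $v_{1}v_{2}$ and is not a hole, so no contradiction is reached. The paper's proof avoids this by splitting on whether $v_{1}v_{2}\in E(G)$ and exhibiting either the $4$-cycle $v_{1}v_{2}w_{2}w_{1}$ or the $5$-cycle $v_{1}v_{0}v_{2}w_{1}w_{2}$ accordingly. The repair is short --- in the adjacent sub-case your own edge/non-edge inventory ($w_{1}v_{2},w_{2}v_{1}\notin E(G)$, $w_{1}v_{1},w_{2}v_{2}\in E(G)$) already yields the induced $4$-cycle --- but as written the step fails on inputs the proposition must handle.
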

\begin{proof}
  (i) Let~$w$ be a common witness of~$v_{0} v_{1}$ and~$v_{0} v_{2}$.
  If~$v_{1}, v_{2}\in N_{G}(s)$, then they are adjacent in~$G$ because~$N_{G}(s)$ is a clique. 
  Now suppose~$v_{1}, v_{2}\not\in N_{G}(s)$; then~$v_{0}\in N_{G}(s)$.
  Since~$v_{1} v_{0} v_{2} w$ is not a hole of~$G$, vertices~$v_{1}$ and~$v_{2}$ must be adjacent.
  
  (ii) For~$i = 1, 2$, let~$w_{i}$ be a witness of~$v_{0} v_{3-i}$.
  By assumption,~$w_{i}$ is not a witness of~$v_{0} v_{i}$, and hence~$w_{1}\ne w_{2}$. 
  Suppose for contradiction that~$w_{1}$ and~$w_{2}$ are adjacent.
  If~$v_{1}, v_{2}\in N_{G}(s)$, then~$v_{1} v_{2} w_{2} w_{1}$ is a hole of~$G$.
  In the rest,~$v_{1}, v_{2}\not\in N_{G}(s)$.
  Since~$w_{i}, i = 1, 2$, does not witness the edge~$v_{0} v_{i}$, it is neither identical nor adjacent to~$v_{i}$.
  Depending on whether~$v_{1}$ and~$v_{2}$ are adjacent, either~$v_{1} v_{2} w_{1} w_{2}$ or~$v_{1} v_{0} v_{2} w_{1} w_{2}$ is a hole of~$G$, contradicting that~$G$ is chordal.
\end{proof}

\section{Forbidden configurations}

\begin{figure}[ht]
  \centering \small
  \begin{subfigure}[b]{0.2\linewidth}
    \centering
    \begin{tikzpicture}[scale=.75]
      \node[a-vertex] (v) at (2, 1) {};
      \foreach \i in {1, 2, 3}
      \node[b-vertex] (u\i) at (\i, 0) {};
      \foreach \i in {1, 2, 3}
      \draw[very thick, teal] (v) -- (u\i);
    \end{tikzpicture}
    \caption{}
    \label{fig:claw-simplified}
  \end{subfigure}
  \begin{subfigure}[b]{0.2\linewidth}
    \centering
    \begin{tikzpicture}[scale=.75]
      \foreach \i in {1, 2}
      \node[a-vertex] (v\i) at (.5+\i, 1) {};
      \foreach \i in {1, 2, 3} {
        \node[b-vertex] (u\i) at (\i, 0) {};
}
      \draw (v1) -- (v2);
      \draw (v1) -- (u3) (v2) -- (u1);
      \draw[witnessed edge] (v1) -- (u1) (v2) -- (u2) (v2) -- (u3);
      \uncertain{v1}{u2};
    \end{tikzpicture}
    \caption{}
    \label{fig:double-claw-simplified}
  \end{subfigure}
  \begin{subfigure}[b]{0.2\linewidth}
    \centering
    \begin{tikzpicture}[scale=.75]
      \node[a-vertex] (v) at (3, 0) {};
      \foreach \i in {1, 2} {
        \node[rotate={180*\i}, b-vertex] (u\i) at (4, {1.5-\i}) {};
        \draw[witnessed edge] (v) -- (u\i);
      }
      \foreach[count=\i] \t in {b-, a-} {
        \node[\t vertex] (x\i) at ({\i},0) {};
      }
      \draw (v)--(x2)--(x1);    
    \end{tikzpicture}
    \caption{}
    \label{fig:fork-simplified}
  \end{subfigure}
  \begin{subfigure}[b]{0.2\linewidth}
    \centering
    \begin{tikzpicture}[scale=.75]
      \foreach[count=\i] \t in {b-, a-} {
        \node[\t vertex] (x\i) at ({\i},0) {};
      }
      \foreach \i in {1, 2} {
        \node[rotate={180*\i}, a-vertex] (v\i) at (3, {1.5-\i}) {};
        \node[rotate={180*\i}, b-vertex] (u\i) at (4, {1.5-\i}) {};
        \draw (x2) -- (v\i);
        \draw[witnessed edge] (v\i) -- (u\i);
      }
      \draw (x2)--(x1)--(v2)  (u1) -- (v2) -- (v1) -- (u2);
    \end{tikzpicture}
    \caption{}
    \label{fig:double-fork+1-simplified}
  \end{subfigure}

  \begin{subfigure}[b]{0.2\linewidth}
    \centering
    \begin{tikzpicture}[scale=.75]
      \draw (1, 0) -- (4, 0);
      \foreach[count=\x from 2] \t/\l/\p in {b-/u/above, b-/x_{3}/below}  \node[\t vertex] (\l) at (\x, 1) {};
      
      \foreach[count=\i] \t/\l in {b-/x_{1}, a-/v_{1}, a-/v_{2}, b-/x_{2}} {
        \node[\t vertex] (u\i) at (\i, 0) {};
      }
      \foreach \i in {2, 3} \draw (u) edge[witnessed edge] (u\i) (u\i) -- (x_{3});
    \end{tikzpicture}
    \caption{}
    \label{fig:ab-wheel-simplified}
  \end{subfigure}
    \begin{subfigure}[b]{0.2\linewidth}
    \centering
    \begin{tikzpicture}[scale=.7]
      \draw (-2.,0) node[b-vertex] (x1) {} -- (2.,0) node[b-vertex] (x2) {};
      \foreach[count=\i] \l in {v_1, v, v_2}
      \node[a-vertex] (v\i) at ({\i*1-2}, 0) {};
      \node[b-vertex] (u) at (90:1) {};
      \draw[witnessed edge] (u) -- (v2);
    \end{tikzpicture}
    \caption{}
    \label{fig:p5+1-simplified}
  \end{subfigure}
\begin{subfigure}[b]{0.2\linewidth}
    \centering
    \begin{tikzpicture}[scale=.7]
\node[a-vertex] (a) at (0, 1.) {};
      \node[b-vertex] (u) at (0, 0) {};
      \draw[witnessed edge] (a) -- (u);
      \foreach[count=\j] \x in {-1, 1} {
        \draw (a) -- ({2*\x}, 0) node[b-vertex] (x\j) {};        
        \draw (a) -- (\x, 0) node[empty vertex] (v\j) {};
        \draw (x\j) -- (v\j);
      }
      \draw (v1) -- (u) -- (v2);
    \end{tikzpicture}
    \caption{}
    \label{fig:p5x1-unlabeled}
  \end{subfigure}
  \begin{subfigure}[b]{0.2\linewidth}
    \centering
    \begin{tikzpicture}[scale=.7]
      \draw (-2.,0) -- (2.,0);
      \node[a-vertex] (a) at (0, 1.) {};
      \node[b-vertex] (u) at (0.75, 1) {};
      \foreach[count=\j] \x in {-2, -1, 0, 1, 2} {
        \ifodd\x
          \def\t{empty vertex}
        \else
          \def\t{b-vertex}
        \fi
        \draw (a) -- (\x, 0) node[\t] (x\j) {};
      }
      \draw (a) edge[witnessed edge] (u) (u) edge (x3);
    \end{tikzpicture}
    \caption{}
    \label{fig:bent-whipping-top-simplified}
  \end{subfigure}

  \begin{subfigure}[b]{0.2\linewidth}
    \centering
    \begin{tikzpicture}[scale=.75]
      \draw (1, 0) -- (4, 0);
      \node[a-vertex] (a) at (2.5, 1) {};
\path (a) ++(0, .75) node[b-vertex] (x) {};
      \uncertain{a}{x};
      
      \foreach[count=\i] \t/\l in {b-/x_{1}, b-/u_{1}, b-/u_{2}, b-/x_{2}} {
        \draw (a) -- (\i, 0) node[\t vertex] (u\i) {};
      }
      \foreach \i in {2, 3}
      \draw[witnessed edge] (a) -- (u\i);
    \end{tikzpicture}
    \caption{}
    \label{fig:(p4+p1)*1-simplified}
  \end{subfigure}
  \begin{subfigure}[b]{0.2\linewidth}
    \centering
    \begin{tikzpicture}[scale=.75]
      \draw (1, 1) -- (4, 1);

      \node[empty vertex] (v4) at (3, 0.2) {};
      \foreach[count=\i] \t/\v/\x in {b-/x_1/1, a-/x_2/2, a-/{\quad v}/3.5, b-/u/5} {
        \node[\t vertex] (u\i) at (\i, 1) {};
      }
      \draw (v4) -- (u1);
      \draw (u2) -- (v4) -- (u3);
      \draw (v4) -- (u4);

      \draw (u3) -- ++(0, .85) node[b-vertex] (x2) {};
      \draw[witnessed edge] (u3) -- (u4);
    \end{tikzpicture}
    \caption{}
    \label{fig:whipping-top-1-unlabeled}
  \end{subfigure}
  \begin{subfigure}[b]{0.2\linewidth}
    \centering
    \begin{tikzpicture}[xscale=.7, yscale=.6]
      \node[empty vertex] (c) at (0, 0) {};
      \foreach[count=\i] \j/\p in {1/below, 3/right, 2/below} {
        \node[b-vertex] (u\i) at ({90*(3-\i)}:2) {};
        \node[empty vertex] (v\i) at ({90*(3-\i)}:1) {};
        \draw (u\i) -- (v\i) -- (c);
      }
    \end{tikzpicture}
    \caption{}
    \label{fig:long-claw-simplified}
  \end{subfigure}
  \begin{subfigure}[b]{0.2\linewidth}
    \centering
    \begin{tikzpicture}[label distance=-2pt, scale=.7]
      \node[empty vertex] (v7) at (0, -1) {};
      \draw (-2, 0) -- (2, 0);
      \foreach[count=\i from 2] \v/\p in {x_{1}/above, v_{1}/above, v_{0}/above right, v_{2}/above, x_{2}/above} {
        \node[empty vertex] (v\i) at ({\i - 4}, 0) {};
        \draw (v\i) -- (v7);
      }
      \node[b-vertex] (v1) at (0, .75) {};
      \foreach \i in {2, 6} \node[b-vertex] at (v\i) {};
      \draw (v4) -- (v1);      
    \end{tikzpicture}
    \caption{}
    \label{fig:whipping-top-unlabeled}
  \end{subfigure}

  \begin{subfigure}[b]{0.2\linewidth}
    \centering
    \begin{tikzpicture}[scale=.5]
      \def\n{5}
      \def\radius{1.5}
      \foreach \i in {0,..., \the\numexpr\n-1\relax} {
        \pgfmathsetmacro{\angle}{90 - (2 - \i) * (360 / \n)}
        \node[empty vertex] (v\i) at (\angle:\radius) {};
}
      \foreach \i in {1,..., \the\numexpr\n-1\relax} {
        \draw (v\i) -- (v\inteval{\i-1});
      }
      \draw[dashed] (v0) -- (v\inteval{\n-1});      
    \end{tikzpicture}
    \caption{}
    \label{fig:holes-unlabeled}
  \end{subfigure}
  \begin{subfigure}[b]{0.2\linewidth}
    \centering
    \begin{tikzpicture}[scale=.75]
      \draw (1, 0) -- (2, 0);
      \draw[dashed] (2, 0) -- (3, 0);
      \node[a-vertex] (a) at (2, 1) {};
      \draw (a) -- ++(0, .75) node[b-vertex] {};

      \foreach[count=\i] \t/\l in {b-/u_{1}, empty /x_{1}, b-/u_{2}} {
        \draw (a) -- (\i, 0) node[\t vertex] (u\i) {};
      }
      \foreach \i in {1, 3}
      \draw[witnessed edge] (a) -- (u\i);
    \end{tikzpicture}
    \caption{}
    \label{fig:dag+2e-simplified}
  \end{subfigure}
  \begin{subfigure}[b]{0.2\linewidth}
    \centering
    \begin{tikzpicture}[scale=.75]
      \node[a-vertex] (x) at (3., 1) {};
      \draw (x) -- ++(0, .75) node[b-vertex] {};

      \draw (1, 0) -- (3, 0);
      \foreach[count=\i] \t/\v in {b-/x_{1}, a-/x_{2}, empty /x_{3}, b-/u} {
        \node[\t vertex] (u\i) at (\i, 0) {};
      }
      \draw[dashed] (u3) -- (u4);
      \foreach \i in {2, 3} \draw (x) -- (u\i);
      \draw[witnessed edge] (x) -- (u4) {};
    \end{tikzpicture}
    \caption{}
    \label{fig:dag+e-simplified}
  \end{subfigure}
  \begin{subfigure}[b]{0.2\linewidth}
    \centering
    \begin{tikzpicture}[scale=.7]
      \draw (1, 0) -- (2, 0) (4, 0) -- (5, 0);
      \draw[dashed] (4, 0) -- (2, 0);
      \node[empty vertex] (a) at (3, 1) {};
      \draw (a) -- ++(0, .75) node[b-vertex] {};

      \foreach[count=\i] \t/\l in {b-/u_{1}, empty /x_{1}, empty /x_{2}, empty /x_{p}, b-/u_{2}} {
        \node[\t vertex] (u\i) at (\i, 0) {};
      }
      \foreach \i in {2, 3, 4}
      \draw (a) -- (u\i);
    \end{tikzpicture}
    \caption{}
    \label{fig:dag-unlabeled}
  \end{subfigure}

  \begin{subfigure}[b]{0.2\linewidth}
    \centering
    \begin{tikzpicture}[scale=.5]
      \foreach \i in {1, 2, 3} {
        \draw ({120*\i+90}:2) -- ({120*\i-30}:2);
        \draw ({120*\i-90}:1) -- ({120*\i+30}:1);
      }
      \foreach \i in {1, 2, 3} {
        \pgfmathparse{int(Mod(\i,3))}
        \node[empty vertex] (u\i) at ({90-120*\i}:2) {};
      }
      \foreach \i in {1 , 2 , 3} {
        \pgfmathparse{int(Mod(\i,3))}
        \node[empty vertex] (v\i) at ({270-120*\i}:1) {};
      }
    \end{tikzpicture}
    \caption{}
    \label{fig:sun-unlabeled}
  \end{subfigure}
  \begin{subfigure}[b]{0.2\linewidth}
    \centering
    \begin{tikzpicture}[scale=.75]
      \node[b-vertex] (x1) at (2, 1.75) {};
      \node[empty vertex] (x2) at (2.5, 1) {};
      \foreach \i in {3, 5} 
      \node[b-vertex] (x\i) at (\i-2, 0) {};
      \node[a-vertex] (v) at (1.5, 1) {};
      \node[empty vertex] (x4) at (2., 0) {};

      \foreach \i in {1, 4, 5} \draw (x2) -- (x\i);
      \foreach \i in {1, ..., 5} \draw (v) -- (x\i);
      \draw (x3) -- (x4) (x4) edge[dashed] (x5);
      \draw[witnessed edge] (v) -- (x5) {};
    \end{tikzpicture}
    \caption{}
    \label{fig:ddag+e-unlabeled}
  \end{subfigure}
  \begin{subfigure}[b]{0.2\linewidth}
    \centering
    \begin{tikzpicture}[scale=.75]
      \node[b-vertex] (x0) at (0, 2.75) {};
      \node[empty vertex] (u) at (0, 1) {};
      \foreach \i in {1, 2} {
        \node[rotate={180*\i}, a-vertex] (v\i) at ({1.5-\i}, 2) {};
        \node[rotate={180*\i}, b-vertex] (x\i) at ({(1.5-\i)*2}, 1) {};
      }
      \draw (u) -- (x2) (v1) -- (v2) (x1) edge[dashed] (u);
      \foreach \i in {1, 2} {
        \foreach \j in {1, 2} 
        \draw (u)--(v\i)--(x\j);
        \draw (v\the\numexpr3-\i\relax) -- (x0);
        \draw[witnessed edge] (v\i) -- (x\the\numexpr3-\i\relax);
      }
      \draw (v1)--(x1);
    \end{tikzpicture}
    \caption{}
    \label{fig:ddag+2e-unlabeled}
  \end{subfigure}
  \begin{subfigure}[b]{0.2\linewidth}
    \centering
    \begin{tikzpicture}[scale=.75]
      \node[b-vertex] (x0) at (2.5, 1.75) {};
      \foreach \i in {1, 2} {
        \node[empty vertex] (v\i) at ({1+\i}, 1) {};
        \draw (x0) -- (v\i);
      }

      \foreach[count=\i] \t/\l in {b-/u_{1}, empty /x_{1}, empty /x_{p}, b-/u_{2}} {
        \node [\t vertex] (u\i) at (\i, 0) {};
      }
      
      \foreach \i in {1, 2, 3} \draw (v1) -- (u\i);
      \foreach \i in {2, 3, 4} \draw (v2) -- (u\i);
      \draw (u1) -- (u2) (u2) edge[dashed] (u3)  (u3) -- (u4)  (v1) -- (v2);
    \end{tikzpicture}
    \caption{}
    \label{fig:ddag-unlabeled}
  \end{subfigure}
  \caption{Forbidden configurations from \cite{cao-24-cag-ii-flipping}.
    The square nodes are ``in $N_{G}[s]$,'' rhombus ``not in $N_{G}[s]$,'' and round ``uncertain.''
    Between a square node and a rhombus node, a thick edge is ``in $G$,'' a thin edge is ``not in $G$,'' and it is ``uncertain'' otherwise (a solid line and a dotted line).
    There are at least four vertices in~\ref{fig:holes-unlabeled}, five in~\ref{fig:dag+2e-simplified}, six in \ref{fig:dag+e-simplified}, \ref{fig:dag-unlabeled}, \ref{fig:ddag+e-unlabeled}, and~\ref{fig:ddag+2e-unlabeled}, and seven in~\ref{fig:ddag-unlabeled}.
}
  \label{fig:forbidden-configurations}
\end{figure}

Graphs in Figures~\ref{fig:forbidden-configurations} and~\ref{fig:forbidden-configurations-unrestricted} are called \emph{forbidden configurations}.

\begin{definition}[Annotations]
  In a forbidden configuration, each vertex has one of the three annotations: in~$N_{G}[s]$, not in~$N_{G}[s]$, or uncertain; each edge between a vertex ``in~$N_{G}[s]$'' and a vertex ``not in~$N_{G}[s]$'' has one of the three annotations: in~$G$, not in~$G$, or uncertain.
\end{definition}
Note that there is no annotation on edges between two vertices ``in~$N_{G}[s]$,'' between two vertices ``not in~$N_{G}[s]$,'' or incident to an ``uncertain'' vertex; they are always ``in~$G$,'' ``in~$G$,'' and ``uncertain,'' respectively.
We say that the graph~$G^{s}$ \emph{contains (an annotated copy of) configuration~$F$} if there exists an isomorphism~$\varphi$ between~$F$ and an induced subgraph of~$G^{s}$ such that
  \begin{itemize}
  \item if $v$ is annotated ``in~$N_{G}[s]$'' (resp., ``not in~$N_{G}[s]$'') then $\varphi(v)\in N_{G}[s]$ (resp., $\varphi(v)\not\in N_{G}[s]$); and
  \item if an edge~$v u$ is annotated ``in~$G$'' (resp., ``not in~$G$'') then~$\varphi(v) \varphi(u)\in E(G)$ (resp., ~$\varphi(v) \varphi(u)\in E(G)$).
  \end{itemize}
Note that the vertex~$s$ is not involved in any forbidden configuration of~$G^{s}$.  By definition,~$s$ is universal in~$G^{s}$.  Every universal vertex in a forbidden configuration is adjacent to a vertex ``not in~$N_{G}[s]$'' with an edge annotated as ``in~$G$,'' which cannot happen for~$s$.
The following is from a pre-sequel~\cite{cao-24-cag-ii-flipping}.

\begin{theorem}[\cite{cao-24-cag-ii-flipping}]\label{thm:forbidden-configurations}
  Let~$G$ be a minimal forbidden induced subgraph of circular-arc graphs.
  If~$G$ is chordal, then for every simplicial vertex~$s$, the graph~$G^{s}$ contains a forbidden configuration shown in Figure~\ref{fig:forbidden-configurations}.
\end{theorem}

The list of forbidden configurations in Figure~\ref{fig:forbidden-configurations} was reduced from another list \cite{cao-24-cag-ii-flipping}.
Some of them are reproduced in Figure~\ref{fig:forbidden-configurations-unrestricted} and will be used in our proofs.
Note that Configuration~\ref{fig:p5+1-simplified} is a special case of Configuration~\ref{fig:p5+1-unlabeled unrestricted}, and others are similar.

\begin{lemma}[\cite{cao-24-cag-ii-flipping}] \label{thm:forbidden-configurations-unrestricted}
  Let~$G$ be a chordal circular-arc graph.
  For every simplicial vertex~$s$, the graph~$G^{s}$ cannot contain any configuration shown in Figure~\ref{fig:forbidden-configurations-unrestricted} .
\end{lemma}

\begin{figure}[h]
  \centering \small
  \begin{subfigure}[b]{0.2\linewidth}
    \centering
    \begin{tikzpicture}[scale=.75]
      \draw (-2.,0) node[empty vertex] (x1) {} -- (2.,0) node[empty vertex] (x2) {};
      \foreach[count=\i] \l/\t in {v_1/empty , v/a-, v_2/empty }
      \node[\t vertex] (v\i) at ({\i*1-2}, 0) {};
      \node[b-vertex] (u) at (90:1) {};
      \draw[witnessed edge] (u) -- (v2);
    \end{tikzpicture}
    \caption{}
    \label{fig:p5+1-unlabeled unrestricted}
  \end{subfigure}
  \begin{subfigure}[b]{0.2\linewidth}
    \centering
    \begin{tikzpicture}[scale=.75]
      \foreach[count=\i] \t in {b-, empty } {
        \node[empty vertex] (x\i) at ({\i},0) {};
      }
      \foreach \i in {1, 2} {
        \node[a-vertex] (v\i) at (3, {1.5-\i}) {};
        \node[b-vertex] (u\i) at (4, {1.5-\i}) {};
        \draw (x2) -- (v\i);
        \draw[witnessed edge] (v\i) -- (u\i);
      }
      \draw (x2)--(x1)--(v2)  (v2) -- (v1) -- (u2);
      \uncertain{u1}{v2};
    \end{tikzpicture}
    \caption{}
    \label{fig:double-fork+1-unlabeled unrestricted}
  \end{subfigure}
  \begin{subfigure}[b]{0.2\linewidth}
    \centering
    \begin{tikzpicture}[scale=.75]
      \draw (-2.,0) -- (2.,0);
      \node[a-vertex] (a) at (0, 1.) {};
      \node[b-vertex] (u) at (0, 0) {};
      \draw[witnessed edge] (a) -- (u);
      \node[b-vertex] (c) at (0, 0) {};
      \foreach[count=\j] \x in {-2, -1, 1, 2} {
        \draw (a) -- (\x, 0) node[empty vertex] (v\j) {};
      }
     \end{tikzpicture}
    \caption{}
    \label{fig:p5x1-unlabeled unrestricted}
  \end{subfigure}
  \begin{subfigure}[b]{0.2\linewidth}
    \centering
    \begin{tikzpicture}[scale=.75]
      \draw (1, 1) -- (4, 1);

       \node[empty vertex] (v4) at (3, 0.2) {};
      \foreach[count=\i] \t/\v/\x in {empty /x_1/1, empty /x_2/2, a-/{\quad v}/3.5, b-/u/5} {
        \draw (v4) -- (\i, 1) node[\t vertex] (u\i) {};
      }

      \draw (u3) -- ++(0, .85) node[empty vertex] (x2) {};
      \draw[witnessed edge] (u3) -- (u4);
     \end{tikzpicture}
    \caption{}
    \label{fig:whipping-top-1-unlabeled unrestricted}
  \end{subfigure}
  
  \begin{subfigure}[b]{0.2\linewidth}
    \centering
    \begin{tikzpicture}[scale=.7]
      \draw (1, 0) -- (2, 0) (4, 0) -- (3, 0);
      \draw[dashed] (3, 0) -- (2, 0);
      \node[empty vertex] (a) at (2.5, 1) {};
      \draw (a) -- ++(0, .75) node[b-vertex] {};

      \foreach[count=\i] \t/\l in {b-/u_{1}, empty /x_{1}, empty /x_{2}, empty /x_{p}} {
        \node[empty vertex] (u\i) at (\i, 0) {};
      }
      \foreach \i in {2, 3}
      \draw (a) -- (u\i);
    \end{tikzpicture}
    \caption{}
    \label{fig:dag-unlabeled unrestricted}
  \end{subfigure}
  \begin{subfigure}[b]{0.22\linewidth}
    \tikzstyle{every node}=[empty vertex]
    \centering
    \begin{tikzpicture}[scale=.75]
      \node[empty vertex] (x1) at (2, 1.75) {};
      \node[empty vertex] (x2) at (2.5, 1) {};
      \foreach \i in {3, 4} 
      \node[empty vertex] (x\i) at (\i-2, 0) {};
      \node[empty vertex] (v) at (1.5, 1) {};
      \node[empty vertex] (x5) at (3., 0) {};

      \foreach \i in {1, 4, 5} \draw (x2) -- (x\i);
      \foreach \i in {1, 3, 2, 4} \draw (v) -- (x\i);
      \draw[dashed] (x3) -- (x4) (x4) edge (x5);
    \end{tikzpicture}
    \caption{}\label{fig:ddag-unlabeled unrestricted}  
  \end{subfigure}
  \begin{subfigure}[b]{0.2\linewidth}
    \centering
    \begin{tikzpicture}[scale=.75]
      \node[empty vertex] (x1) at (2, 1.75) {};
      \node[empty vertex] (x2) at (2.5, 1) {};
      \foreach \i in {3, 4} 
      \node[empty vertex] (x\i) at (\i-2, 0) {};
      \node[a-vertex] (v) at (1.5, 1) {};
      \node[b-vertex] (x5) at (3., 0) {};

      \foreach \i in {1, 4, 5} \draw (x2) -- (x\i);
      \foreach \i in {1, ..., 5} \draw (v) -- (x\i);
      \draw (x3) -- (x4) (x4) edge[dashed] (x5);
      \draw[witnessed edge] (v) -- (x5) {};
    \end{tikzpicture}
    \caption{}
    \label{fig:ddag+e-unlabeled unrestricted}
  \end{subfigure}
  \begin{subfigure}[b]{0.2\linewidth}
    \centering
    \begin{tikzpicture}[scale=.75]
      \node[empty vertex] (x0) at (0, 2.75) {};
      \node[empty vertex] (u) at (0, 1) {};
      \foreach \i in {1, 2} {
        \node[rotate={180*\i}, a-vertex] (v\i) at ({1.5-\i}, 2) {};
        \node[rotate={180*\i}, b-vertex] (x\i) at ({(1.5-\i)*2}, 1) {};
      }
      \draw (u) -- (x2) (v1) -- (v2) (x1) edge[dashed] (u);
      \foreach \i in {1, 2} {
        \foreach \j in {1, 2} 
        \draw (u)--(v\i)--(x\j);
        \draw (v\the\numexpr3-\i\relax) -- (x0);
        \draw[witnessed edge] (v\i) -- (x\the\numexpr3-\i\relax);
      }
      \draw (v1)--(x1);
    \end{tikzpicture}
    \caption{}
    \label{fig:ddag+2e-unlabeled unrestricted}
  \end{subfigure}
  \caption{Some of the more relaxed configurations.
    There are at least six vertices in the bottom row.
  }
  \label{fig:forbidden-configurations-unrestricted} 
\end{figure}

Theorem~\ref{thm:forbidden-configurations} was proved on~$C_{4}$-free graphs, a superclass of chordal graphs \cite{cao-24-cag-ii-flipping}.  If~$G$ is a connected chordal graph, most configurations in Figure~\ref{fig:forbidden-configurations} will never appear.
The rest of this section is to pare the list, i.e., proving Theorem~\ref{thm:simplified-forbidden-configurations}.
Note that Configuration~\ref{fig:ddag+2e-simplified} has at least seven vertices, while the smallest of Configuration~\ref{fig:ddag+2e-unlabeled} has six vertices, which cannot happen.
The following simple observations are crucial for our analyses.
\begin{lemma}\label{lem:assumption}
  Let~$G$ be a minimal connected chordal graph that is not a circular-arc graph, and let~$U$ be the vertex set of an annotated copy of a forbidden configuration in~$G^{s}$.
  \begin{enumerate}[{{A}1)}] 
  \item\label{assumption:minimality} For any vertex set~$A\subsetneq V(G)$ such that~$G[A]$ does not have any universal vertex and any simplicial vertex~$x$ of~$G[A]$, the graph~$(G[A])^{x}$ does not contain any forbidden configuration.
  \item\label{assumption:N(s)} $\emptyset\subsetneq N_{G}(s) \subseteq U$. \item\label{assumption:replacable}
    If a vertex~$v\in U\cap N_{G}(s)$ is simplicial in~$U$ and annotated ``uncertain,'' there cannot be any vertex~$x\in V(G)\setminus U$ such that~$N_{G^{s}}(x)\cap U = N_{G^{s}}(v)\cap U$.
  \end{enumerate}
\end{lemma}
\begin{proof}
  A\ref{assumption:minimality}.
  If~$(G[A])^{x}$ contains any forbidden configuration, then~$G[A]$ is not a circular-arc graph by Theorem~\ref{thm:forbidden-configurations}, violating the minimality of~$G$.

  A\ref{assumption:N(s)}.
  First, $N_{G}(s)\ne \emptyset$ since~$G$ is connected and~$|V(G)| > 1$.
  If there is a vertex~$v\in N_{G}(s)\setminus U$, then~$(G - v)^{s}[U]$ is the same configuration, violating~A\ref{assumption:minimality}.

  A\ref{assumption:replacable}.  If such a vertex~$x$ exists, then~$(G - v)^{s}$ contains the same forbidden configuration as~$G^{s}[U]$, with~$v$ replaced by~$x$, violating~A\ref{assumption:minimality}.
\end{proof}

We now deal with the configurations in Figure~\ref{fig:forbidden-configurations} but not Figure~\ref{fig:simplified-forbidden-configurations}.  We process them in groups.
Throughout this section,~$G$ is a minimal connected chordal graph that is not a circular-arc graph.

\begin{figure}[ht]
  \centering \small
  \begin{subfigure}[b]{0.22\linewidth}
    \centering
    \begin{tikzpicture}[scale=.9]
      \node[a-vertex, "$v$"] (v) at (2, 1) {};
      \foreach \i in {1, 2, 3}
        \node[b-vertex, "$u_\i$" below] (u\i) at (\i, 0) {};
      \foreach \i in {1, 2, 3}
        \draw[very thick, teal] (v) -- (u\i);
    \end{tikzpicture}
    \caption{}
    \label{fig:claw}
  \end{subfigure}
  \begin{subfigure}[b]{0.22\linewidth}
    \centering
    \begin{tikzpicture}[scale=.9]
      \foreach \i in {1, 2}
      \node[a-vertex, "$v_\i$"] (v\i) at (.5+\i, 1) {};
      \foreach \i in {1, 2, 3} {
        \node[b-vertex, "$u_\i$" below] (u\i) at (\i, 0) {};
      }
      \draw (u3) -- (v1) -- (v2) -- (u1);
      \draw[witnessed edge] (v1) -- (u1) (v2) -- (u2) (v2) -- (u3);
      \uncertain{v1}{u2};
\end{tikzpicture}
    \caption{}
    \label{fig:double-claw}
  \end{subfigure}
  \begin{subfigure}[b]{0.22\linewidth}
    \centering
    \begin{tikzpicture}[scale=.85]
      \foreach[count=\i] \t in {b-, a-} {
        \draw ({\i},0) node[\t vertex] (x\i) {} ++ (0, 1em) node {$x_{\i}$};
      }
\draw (3,0) node[a-vertex] (v) {} ++ (0, 1em) node {$v$};      
      \foreach \i in {1, 2} {
        \foreach[count=\x from 4] \v/\t in {u/b-} 
          \draw (\x, {1.5-\i}) node[\t vertex] (\v\i) {} ++ (0, \inteval{3-2*\i}em) node {$\v_{\i}$};
        \draw[witnessed edge] (v) -- (u\i);
      }
      \draw (v)--(x2)--(x1);
    \end{tikzpicture}
    \caption{}
    \label{fig:fork}
\end{subfigure}
  \,
  \begin{subfigure}[b]{0.24\linewidth}
    \centering
    \begin{tikzpicture}[scale=.85]
      \foreach[count=\i] \t in {b-, a-} {
\draw ({\i},0) node[\t vertex] (x\i) {} ++ (0, 1em) node {$x_{\i}$};
      }
      \foreach \i in {1, 2} {
        \foreach[count=\x from 3] \v/\t in {v/a-, u/b-} 
          \draw (\x, {1.5-\i}) node[\t vertex] (\v\i) {} ++ (0, \inteval{3-2*\i}em) node {$\v_{\i}$};
        \draw (x2) -- (v\i);
        \draw[witnessed edge] (v\i) -- (u\i);
      }
      \draw (x2)--(x1)--(v2)  (u1) -- (v2) -- (v1) -- (u2);
    \end{tikzpicture}
    \caption{}
    \label{fig:double-fork+1}
  \end{subfigure}

  \begin{subfigure}[b]{0.22\linewidth}
    \centering
    \begin{tikzpicture}[scale=.75]
      \node[a-vertex, "$v$"] (v) at (3, 0) {};
      \node[b-vertex, "$w_{0}$" below] (w0) at (2.5, -.75) {};
      \foreach \i in {1, 2} {
        \node[b-vertex, "$u_{\i}$" right] (u\i) at (4., {1.5-\i}) {};
        \node[rotate={180*\i}, b-vertex, "$w_{\i}$" below] (w\i) at (3.5, {(1.5-\i)*1.5}) {};
        \draw[witnessed edge] (v) -- (u\i);
        \draw (v) -- (w\i) -- (u\i);
      }
      \foreach[count=\i] \t in {b-, a-} {
        \node[\t vertex, "$x_{\i}$"] (x\i) at ({\i},0) {};
      }
      \foreach \v in {v, x2, w2} \draw (w0) -- (\v);
      \draw (v)--(x2)--(x1);    
      \draw[witnessed edge] (x2) -- (w2);
    \end{tikzpicture}
    \caption{}\label{fig:lem:forks-1}
  \end{subfigure}
  \begin{subfigure}[b]{0.22\linewidth}
    \centering
    \begin{tikzpicture}[scale=.75]
      \node[a-vertex, "$v$"] (v) at (3, 0) {};
      \node[b-vertex, "$w_{0}$" below] (w0) at (2.5, -1) {};
      \foreach[count=\i] \v in {x, u} {
        \node[b-vertex, "$\v_{2}$" right] (u\i) at (4.5, {1.5-\i}) {};
        \node[rotate={180*\i}, b-vertex, "$w_{\i}$" below] (w\i) at (3.8, {(1.5-\i)*2}) {};
        \draw[witnessed edge] (v) -- (u\i);
        \draw (v) -- (w\i) -- (u\i);
      }
      \foreach \v in {v, w2} \draw (w0) -- (\v);
      \draw (w2) -- (u1) -- (u2);
\end{tikzpicture}
    \caption{}\label{fig:lem:forks-2}
  \end{subfigure}
  \begin{subfigure}[b]{0.22\linewidth}
    \centering
    \begin{tikzpicture}[scale=.85]
      \foreach[count=\i] \t in {b-, a-} {
        \node[\t vertex, "$x_{\i}$" below] (x\i) at ({\i}, 0.5) {};
      }
      \node[a-vertex, "$v_{1}$" below] (v1) at (3, .5) {};
      \foreach \i in {1, 2} {
        \node[b-vertex, "$u_{\i}$" below] (u\i) at (4., {1.5-\i}) {};
      }
      \draw (x2) -- (v1);
      \draw[witnessed edge] (v1) -- (u1);
      \node[b-vertex, "$w_{1}$"] (w1) at (3.5, 1.2) {};
      \draw (x2)--(x1)  (v1) -- (u2);
      \draw (u1)--(w1)--(v1);
      \draw[bend left, witnessed edge] (x2) edge (w1);
    \end{tikzpicture}
    \caption{}
    \label{fig:lem:forks-3}
  \end{subfigure}
  \,
  \begin{subfigure}[b]{0.24\linewidth}
    \centering
    \begin{tikzpicture}[scale=.85]
      \foreach[count=\i] \t in {b-, a-} {
        \node[\t vertex, "$x_{\i}$"] (x\i) at ({\i}, 0) {};
      }
\node[b-vertex, "$w_{2}$" below] (w2) at (3.5, -1.2) {};
      \foreach \i/\v in {1/w, 2/u} 
        \node[rotate={180*\i}, a-vertex, "$v_{\i}$" below] (v\i) at (3, {1.5-\i}) {};
      \foreach \i/\v/\p in {1/w/, 2/u/below} {
        \node[b-vertex, "$\v_{\i}$" \p] (\v\i) at (4., {1.5-\i}) {};
\draw (x2) -- (v\i);
        \foreach \j in {1, 2} \draw (w\i)--(v\j);
      }
      \draw[witnessed edge] (v2) -- (u2);
      \draw (x2)--(x1)  (v2) -- (v1) -- (u2);
      \draw (u2)--(w2)--(v2) (w2)--(v1);
      \draw[bend right] (x1) edge (v2);
      \draw[bend right, witnessed edge] (x2) edge (w2);
    \end{tikzpicture}
    \caption{}
    \label{fig:lem:forks-4}
  \end{subfigure}
  \caption{Illustrations for Lemma~\ref{lem:group-0}.}
\end{figure}

\begin{lemma}\label{lem:group-0}
  The graph~$G^{s}$ cannot contain
Configurations~\ref{fig:claw-simplified}--\ref{fig:double-fork+1-simplified}.
\end{lemma}
\begin{proof}
For Configuration~\ref{fig:claw-simplified}, we use the labels in Figure~\ref{fig:claw}.
  For~$i = 1, 2, 3$, let~$w_{i}$ be a witness of the edge~$v u_{i}$.
  By Proposition~\ref{lem:P3}, $w_{1}$, $w_{2}$, and~$w_{3}$ are distinct, and there is no edge among them.
  Thus,~$G[U\cup\{w_{1}, w_{2}, w_{3}\}]$ is a long claw, where the vertex~$v$ has degree three, and for~$i = 1, 2, 3$, the vertices~$u_{i}$ and~$w_{i}$ have degree two and one, respectively.
  This violates Assumption (minimality) since the long claw does not involve~$s$.
  Thus,~$G^{s}$ cannot contain Configuration~\ref{fig:claw-simplified}.
  
  \bigskip
  For Configuration~\ref{fig:double-claw-simplified}, we use the labels in Figure~\ref{fig:double-claw}.
  Let~$W$~be all the witnesses of the edge~$u_{1} v_{1}$; note that~$W\cap U=\emptyset$.
  By the selection of~$W$, the edge~$u_{1} v_{1}$  is missing in~$(G - W)^{s}$
  We argue that all the other edges in~$G^{s}[U]$ remain in~$(G - W)^{s}$.
By Proposition~\ref{lem:P3}, no vertex in~$W$ witnesses~$u_{2} v_{1}$.
  If a vertex~$w \in W$ witnesses~$u_{2} v_{2}$, then~$w u_{1} v_{1} v_{2} u_{2}$ is a hole of~$G$.
  For the same reason, no vertex in~$W$ witnesses~$u_{3} v_{2}$.
  By Proposition~\ref{lem:unwitnessed}, the clique~$\{v_{1}, v_{2}, u_{3}\}$ is witnessed in~$G^{s}$.  Since its witnesses are not in~$W$, it remains witnessed in~$(G - W)^{s}$.
  Thus,~$(G - W)^{s}[U]$ is Configuration~\ref{fig:dag+2e-simplified}.
  This violates A\ref{assumption:minimality}, and hence~$G^{s}$ cannot contain Configuration~\ref{fig:double-claw-simplified}.
  
  \bigskip
  For Configuration~\ref{fig:fork-simplified}, we use the labels in Figure~\ref{fig:fork}.  
  Let~$w_{0}$ be a witness of edge~$x_{2} v$, and for~$i = 1, 2$, let~$w_{i}$ be a witness of edge~$v u_{i}$.
  By Proposition~\ref{lem:P3},~$w_{1}\ne w_{2}$ and they are not adjacent.  
  By Assumption (minimality),
  \[
    V(G) = U\cup \{s, w_{0}, w_{1}, w_{2}\}.
  \]
  By Proposition~\ref{lem:nonadjacent->dominate},~$N_{G^{s}}(w_{0})\cap U = \{x_{2}, v\}$, and hence~$w_{0}\not\in \{w_{1}, w_{2}\}$ and~$N_{G}(w_{0})\cap U = \emptyset$.
  Since~$G$ is connected,~$w_{0}$ is adjacent to at least one of~$w_{1}$ and~$w_{2}$.
  On the other hand,~$w_{0}$ cannot be adjacent to both~$w_{1}$ and~$w_{2}$; otherwise,~$w_{0} w_{1} u_{1} v u_{2} w_{2}$ is a hole of~$G$.
  Assume without loss of generality that~$w_{0}$ is adjacent to~$w_{2}$.
  Then~$w_{2} x_{2}$ is a collateral edge by Proposition~\ref{lem:nonadjacent->dominate}; see Figure~\ref{fig:lem:forks-1}.
Note that~$N_{G}(x_{1}) = \{v\}$.
  The graph~$(G - \{s, u_{1}\})^{x_{1}}$ contains Configuration~\ref{fig:ddag+e-unlabeled} of order six; see Figure~\ref{fig:lem:forks-2}.
Note that the subgraph of~$(G - \{s, u_{1}\})^{x_{1}}$ induced by~$V(G)\setminus \{s, x_{1}, x_{2}, u_{1}\}$ is the same as in~$G^{s}$, and the vertex~$x_{2}$ is adjacent to~$u_{2}, w_{1}, w_{2}$, and also~$v$, where~$v x_{2}$ is a collateral edge witnessed by~$w_{1}$.
  This violates A\ref{assumption:minimality}, and hence~$G^{s}$ cannot contain Configuration~\ref{fig:fork-simplified}.
  
  \bigskip
  For Configuration~\ref{fig:double-fork+1-simplified}, we use the labels in Figure~\ref{fig:double-fork+1}.
  By Lemma~\ref{lem:witness-N[s]}, we can find a witness~$w_{0}$ of the clique~$\{v_{1}, v_{2}, x_{2}\}$.
  For~$i = 1, 2$, let~$w_{i}$ be a witness of the clique~$\{u_{i}, v_{1}, v_{2}\}$, which exists because of Proposition~\ref{lem:unwitnessed}.
By Assumption (minimality),
  \[
    V(G) = U\cup \{w_{0}, w_{1}, w_{2}\}.
  \]
By Proposition~\ref{lem:nonadjacent->dominate},~$w_{0}\not\in \{w_{1}, w_{2}\}$.
  Since both~$w_{1} u_{1} v_{1} v_{2}$ and~$v_{1} v_{2} u_{2} w_{2}$ are induced paths of~$G$, vertices~$w_{1}$ and~$w_{2}$ are different and there is no edge between~$\{u_{1}, w_{1}\}$ and~$\{u_{2}, w_{2}\}$.
As a result,~$w_{0}$ is adjacent to at most one of~$w_{1}$ and~$w_{2}$.
  Since~$G$ is connected,~$w_{0}$ is adjacent to at least one of~$w_{1}$ and~$w_{2}$.
For~$i = 1, 2$, the vertex~$w_{i}$ and~$x_{2}$ are adjacent in~$G$ by Proposition~\ref{lem:nonadjacent->dominate}, and thus they are adjacent in~$G^{s}$ if and only if~$w_{i}$ is adjacent to~$w_{0}$.
  If~$w_{0}$ and~$w_{1}$ are adjacent, then~$(G - v_{2})^{s}$ contains Configuration~\ref{fig:dag+e-simplified} ($\{u_{2}, v_{1}, x_{1}, x_{2}, w_{1}, u_{1}\}$); see Figure~\ref{fig:lem:forks-3}.
  Otherwise,~$w_{0}$ and~$w_{2}$ are adjacent, and hence~$w_{2} x_{2}$ is a collateral edge by Proposition~\ref{lem:nonadjacent->dominate}; see Figure~\ref{fig:lem:forks-4}.
  Then~$(G - u_{1})^{s}$ contains Configuration~\ref{fig:ddag+e-unlabeled} ($\{w_{1}, v_{2}, v_{1}, x_{1}, x_{2}, w_{2}, u_{2}\}$), where~$v_{2}$ is universal and~$v_{2} u_{2}$ is a collateral edge,~$x_{1} x_{2} w_{2} u_{2}$ is an induced path, while~$w_{1}$ and~$v_{1}$ have degrees~two and~five, respectively.
  All violate A\ref{assumption:minimality}, and hence~$G^{s}$ cannot contain Configuration~\ref{fig:double-fork+1-simplified}.
\end{proof}

\begin{figure}[ht]
  \centering \small
  \begin{subfigure}[b]{0.25\linewidth}
    \centering
    \begin{tikzpicture}[scale=.75]
      \draw (-2.,0) node[b-vertex, "$x_1$" below] (x1) {} -- (2.,0) node[b-vertex, "$x_2$" below] (x2) {};
      \foreach[count=\i] \l in {v_1, v_{0}, v_2}
      \node[a-vertex, "$\l$" below] (v\i) at ({\i*1-2}, 0) {};
      \node[b-vertex, "$u$"] (u) at (90:1) {};
      \draw[witnessed edge] (u) -- (v2);
    \end{tikzpicture}
    \caption{}
    \label{fig:p5+1}
  \end{subfigure}
  \begin{subfigure}[b]{0.25\linewidth}
    \centering
    \begin{tikzpicture}[scale=.72]
      \draw (-2.,0) -- (2.,0);
      \node[a-vertex, "$v_{0}$"] (a) at (0, 1.) {};
\node[b-vertex, "$u$"] (u) at (0.75, 1) {};
\foreach[count=\j] \x/\p in {-2/x_1, -1/v_{1}, 0/x_{0}, 1/v_{2}, 2/x_{2}} {
        \draw (a) -- (\x, 0) node[empty vertex, "$\p$" below] (x\j) {};
      }
      \foreach \i in {1, 3, 5} \node[b-vertex] at (x\i) {};
      \draw (a) edge[witnessed edge] (u) (u) edge (x3);\end{tikzpicture}
    \caption{}
    \label{fig:bent-whipping-top}
\end{subfigure}
  \begin{subfigure}[b]{0.25\linewidth}
    \centering
    \begin{tikzpicture}[xscale=.75, yscale=.6]
      \node[empty vertex, "$v_{0}$" below] (c) at (0, 0) {};
      \foreach[count=\i] \j/\p in {1/below, 3/right, 2/below} {
        \node[b-vertex, "$x_{\j}$" \p] (u\i) at ({90*(3-\i)}:2) {};
        \node[empty vertex, "$v_{\j}$" \p] (v\i) at ({90*(3-\i)}:1) {};
        \draw (u\i) -- (v\i) -- (c);
      }
    \end{tikzpicture}
    \caption{}
    \label{fig:long-claw}
  \end{subfigure}

  \begin{subfigure}[b]{0.25\linewidth}
    \centering
    \begin{tikzpicture}\draw (-1.,0) -- (2.,0) node[b-vertex, "$x_2$" below] (x2) {};
      \foreach[count=\i] \l/\t in {w_1/b-, v_{0}/a-, v_2/a-}
      \node[\t vertex, "$\l$" below] (v\i) at ({\i*1-2}, 0) {};
      \node[b-vertex, "$u$"] (u) at (90:1) {};
      \node[b-vertex, "$v_{1}$"] (new) at (1, 1) {};
      \draw[witnessed edge] (u) -- (v2);
      \foreach \i in {2, 3}
      \draw[witnessed edge] (new) -- (v\i);
      \foreach \v in {u, x2}
      \draw (new) -- (\v);
    \end{tikzpicture}
    \caption{}
    \label{fig:p5+1-1}
  \end{subfigure}
  \begin{subfigure}[b]{0.25\linewidth}
    \centering
    \begin{tikzpicture}[scale=.6]
      \node[empty vertex] (c) at (0, 0) {} ++ (.2, -.3) node {$u$};
      \foreach \i/\l/\p in {1/v_{0}/, 2/w_{2}/below, 3/w_{1}/below} {
          \node[empty vertex, "$\l$" \p] (u\i) at ({120*\i-30}:2) {};
          \draw ({120*\i-90}:1) -- (u\i) -- ({120*\i+30}:1);
          \draw ({120*\i-90}:1) -- ({120*\i+30}:1);
      }
      \foreach[count =\j from 3] \i/\l/\p in {1/v_{1}/, 2/x_{0}/below, 3/v_{2}/} {
        \node[empty vertex, "$\l$" \p] (v\i) at ({120*\i+30}:1) {};
        \draw (v\i) -- (c);
      }
      \draw (u1) -- (c);
      \foreach \i/\l in {1/2, 3/1} {
        \draw (v\i) -- ({90 - 60*(\i-2)}:2) node[empty vertex, "$x_{\l}$"] {};
      }
    \end{tikzpicture}
    \caption{}
    \label{fig:bent-whipping-top-1}
  \end{subfigure}
  \begin{subfigure}[b]{0.25\linewidth}
    \centering
    \begin{tikzpicture}[scale=.6]
      \node[empty vertex] (c) at (0, 0) {} ++ (.2, -.3) node {$v_{3}$};
      \foreach \i/\l/\p in {1/x_{3}/, 2/w_{2}/below, 3/w_{1}/below} {
          \node[empty vertex, "$\l$" \p] (u\i) at ({120*\i-30}:2) {};
          \draw ({120*\i-90}:1) -- (u\i) -- ({120*\i+30}:1);
          \draw ({120*\i-90}:1) -- ({120*\i+30}:1);
      }
      \foreach[count =\j from 3] \i/\l/\p in {1/v_{1}/, 2/v_{0}/below, 3/v_{2}/} {
        \node[empty vertex, "$\l$" \p] (v\i) at ({120*\i+30}:1) {};
        \draw (v\i) -- (c);
      }
      \draw (u1) -- (c);
      \foreach \i/\l in {1/2, 3/1} {
        \draw (v\i) -- ({90 - 60*(\i-2)}:2) node[empty vertex, "$x_{\l}$"] {};
      }
    \end{tikzpicture}
    \caption{}
    \label{fig:long-claw-1}
  \end{subfigure}
  \caption{Illustrations for Lemma~\ref{lem:group-p5}.}
\label{fig:group-2}
\end{figure}

\begin{lemma}\label{lem:group-p5}
  The graph~$G^{s}$ cannot contain Configuration~\ref{fig:p5+1-simplified},~\ref{fig:bent-whipping-top-simplified}, or~\ref{fig:long-claw-simplified}.
\end{lemma}
\begin{proof}
  For Configuration~\ref{fig:p5+1-simplified}, we use the labels in Figure~\ref{fig:p5+1}.
  Let~$w_{0}$,~$w_{1}$, and $w_{2}$ be a witnesses of the collateral edges~$v_{0} u$,~$v_{0} v_{1}$, and $v_{0} v_{2}$, respectively.
  They are distinct by Proposition~\ref{lem:nonadjacent->dominate}.
  By Assumption (minimality),
  \[
    V(G) = U\cup \{s, w_{0}, w_{1}, w_{2}\}.
  \]
  Note that~$N_{G}(x_{1}) = \{v_{0}, v_{2}\}$.
  In~$(G - s)^{x_{1}}$, the set~$V(G)\setminus \{s, x_{1}, v_{1}\}$ induces the same subgraph as in~$G^{s}$, and the vertex~$v_{1}$ is adjacent to~$u$ and~$x_{2}$, and hence~$v_{0}$ and~$v_{2}$ (there is a hole in $(G - s)^{x_{1}}$ otherwise).
  See Figure~\ref{fig:p5+1-1}.
  Since the edges $v_{1}v_0$ and $v_{1}v_{2}$ are collateral, $(G - s)^{x_{1}}$ contains Configuration~\ref{fig:whipping-top-1-unlabeled} ($U\cup \{w_{1}\}\setminus \{x_{1}\}$).
  This violates A\ref{assumption:minimality}, and hence~$G^{s}$ cannot contain Configuration~\ref{fig:p5+1-simplified}.
  
  \bigskip
For Configuration~\ref{fig:bent-whipping-top-simplified}, we use the labels in Figure~\ref{fig:bent-whipping-top}.
Note that
  \begin{equation}
    \label{eq:35}
    \text{for~$i = 1, 2$, the edge~$v_{0} v_{i}$ is collateral if and only if~$v_{i}\in N_{G}(s)$.}
  \end{equation}
  Otherwise,~$x_0$ is a witness of both~$v_0u$ and $v_0 v_{i}$, violating Proposition~\ref{lem:P3}.

  We then claim that
  \begin{equation}
    \label{eq:36}
    \text{for~$i = 1, 2$, the edge~$v_{i} x_{i}$ is not collateral.}
  \end{equation}
  We consider~$i =1$, and the other holds by symmetry.  
  Suppose that the edge~$v_{1} x_{1}$ is collateral; then~$v_{1}\in N_{G}(s)$.
  By Proposition~\ref{lem:unwitnessed}, there is a witness~$w$ of the clique~$\{v_{0}, v_{1}, x_{1}\}$.
  By Proposition~\ref{lem:nonadjacent->dominate} and A\ref{assumption:replacable},~$N_{G^{s}}(w)\cap U$ must be~$\{v_{0}, v_{1}, x_{1}\}$.  Then~$(G - x_{1})^{s}$ contains an annotated copy of the same configuration, with~$x_{1}$ replaced by~$w$.
This violates A\ref{assumption:minimality}, and hence~$v_{1} x_{1}$ is not collateral.

For~$i = 1, 2$, if~$v_{i}\in N_{G}(s)$, the edge~$v_{i} x_{0}$ is collateral by Corollary~\ref{lem:P4}.
  Moreover, since the edge~$v_{0} x_{0}$ is not collateral, there is a witness~$w_{i}$ of the clique~$\{v_{0}, v_{i}, x_{0}\}$ by Proposition~\ref{lem:unwitnessed}.
  By Proposition~\ref{lem:nonadjacent->dominate} and A\ref{assumption:replacable},~$N_{G^{s}}(w_{i}) \cap U = \{v_{0}, v_{i}, x_{0}\}$.
  We take~$w_{i}$ to be a witness of~$\{v_{0}, v_{i}, x_{0}\}$ if~$v_{i}\in N_{G}(s)$, or~$v_{i}$ otherwise.
  By Assumption (minimality),
  \[
    V(G) = U\cup \{s, w_{1}, w_{2}\}.
  \]
  If only one of~$v_{1}$ and~$v_{2}$ is in~$N_{G}(s)$, say~$v_{1}$, then~$x_{1}$ is isolated by~\eqref{eq:36}, violating Assumption (connectivity).
If both~$v_{1}$ and~$v_{2}$ are in~$N_{G}(s)$, then~$G - s$ is Figure~\ref{fig:bent-whipping-top-1}, which is isomorphic to Figure~\ref{fig:whipping-top-derived}. 
In the rest, neither~$v_{1}$ nor~$v_{2}$ is in~$N_{G}(s)$.
  By~\eqref{eq:35},~$v_{1}, v_{2}\not\in N_{G}(v_{0})$.
  Then~$G - s$ is a long claw, where~$u$ is the only neighbor of~$v_{0}$.
  Both violate Assumption (minimality), and hence~$G^{s}$ cannot contain Configuration~\ref{fig:bent-whipping-top-simplified}.
  
  \bigskip
   For Configuration~\ref{fig:long-claw-simplified}, we use the labels in Figure~\ref{fig:long-claw}.
   We start with arguing that 
  \begin{equation}
    \label{eq:23}
    \text{for~$i = 1, 2, 3$, the edge~$x_{i} v_{i}$ is not collateral.} \end{equation}
  We consider~$i = 1$, and the others are symmetric.
  Suppose that~$x_{1} v_{1}$ is collateral, and let~$w$ be its witness.
  By Proposition~\ref{lem:nonadjacent->dominate} and A\ref{assumption:replacable},~$N_{G^{s}}(w)\cap U = \{x_{1}, v_{1}\}$, and then~$(G - x_{1})^{s}$ contains Configuration~\ref{fig:long-claw-simplified} ($U\cup w\setminus \{x_{1}\}$).

  For~$i = 1, 2, 3$, let~$w_{i}$ be a witness of the edge~$v_{0} v_{i}$ if it is collateral.
  By Assumption (minimality),
  \[
    V(G) = U\cup \{s, w_{i}\mid 1\le i\le 3, v_{0} v_{i} \in E(G)\}.
  \]
Suppose first that~$v_{0}\in N_G(s)$.
  For all~$i = 1, 2, 3$, the edge~$v_{0} v_{i}$ is collateral by Corollary~\ref{lem:P4}.
  We note~$v_{i}\in N_G(s)$; otherwise,~$(G - x_{i})^{s}$ contains Configuration~\ref{fig:p5+1-unlabeled unrestricted}, violating Lemma~\ref{thm:forbidden-configurations-unrestricted}.
By Proposition~\ref{lem:nonadjacent->dominate}, $N_{G^{s}}(w_{i}) = \{v_{0}, v_{i}\}$.
  Then~$G - s$ is isomorphic to Figure~\ref{fig:long-claw-derived}, where for~$i = 1, 2, 3$, the vertex~$v_{i}$ has degree seven,~$x_{i}$ degree three, and~$w_{i}$ degree two, while~$v_{0}$ has degree six.  This violates Assumption (minimality).

  Henceforth,~$v_{0}\not\in N_G(s)$.
  If~$v_{i} \in N_G(s)$, the edge~$v_{0} v_{i}$ is collateral by Corollary~\ref{lem:P4}, and~$N_{G^{s}}(w_{i}) = \{v_{0}, v_{i}\}$ by Proposition~\ref{lem:nonadjacent->dominate} and A\ref{assumption:replacable}.
  By Proposition~\ref{lem:P3}, there are no edges among the witnesses.
  If~$v_{i}, i = 1, 2, 3$, is the only vertex in~$U\cap N_G(s)$, then~$x_{i}$ is an isolated vertex in~$G$.
  If~$v_{1}, v_{2}, v_{3}\in N_G(s)$, then~$G - s$ is isomorphic to Figure~\ref{fig:long-claw-derived}, where for~$i = 1, 2, 3$, the vertex~$v_{i}$ has degree seven,~$w_{i}$ degree three, and~$x_{i}$ degree two, while~$v_{0}$ has degree six. 
  Otherwise, assume without loss of generality that~$v_{1}, v_{2}\in N_G(s)$ and~$v_{3}\not\in N_G(s)$.
  Then~$G - s$ is Figure~\ref{fig:long-claw-1}, which is isomorphic to Figure~\ref{fig:whipping-top-derived}. 
  Both violate Assumption (minimality), and hence~$G^{s}$ cannot contain Configuration~\ref{fig:long-claw-simplified}.
\end{proof}

\begin{figure}[ht]
  \centering \small
  \begin{subfigure}[b]{0.22\linewidth}
    \centering
    \begin{tikzpicture}[scale=.9]
      \draw (1, 0) -- (4, 0);
      \foreach[count=\x from 2] \t/\l/\p in {b-/u/above, b-/x_{3}/below}  \node[\t vertex, "$\l$"] (\l) at (\x, 1) {};
      
      \foreach[count=\i] \t/\l in {b-/x_{1}, a-/v_{1}, a-/v_{2}, b-/x_{2}} {
        \node[\t vertex, "$\l$" below] (u\i) at (\i, 0) {};
      }
      \foreach \i in {2, 3} \draw (u) edge[witnessed edge] (u\i) (u\i) -- (x_{3});
    \end{tikzpicture}
    \caption{}
    \label{fig:ab-wheel}
  \end{subfigure}
  \begin{subfigure}[b]{0.2\linewidth}
    \centering
    \begin{tikzpicture}[scale=.75]
      \draw (1, 0) -- (4, 0);
      \node[a-vertex, "$v$" right] (a) at (2.5, 1) {};
\path (a) ++(0, .75) node[b-vertex, "$x_{0}$" right] (x) {};
      \uncertain{a}{x};

      \foreach[count=\i] \t/\l in {empty /x_{1}, b-/u_{1}, b-/u_{2}, empty /x_{2}} {
        \draw (a) -- (\i, 0) node[b-vertex, "$\l$" below] (u\i) {};
      }
      \foreach \i in {2, 3}
      \draw[witnessed edge] (a) -- (u\i);
    \end{tikzpicture}
    \caption{}
    \label{fig:(p4+p1)*1}
  \end{subfigure}
  \begin{subfigure}[b]{0.2\linewidth}
    \centering
    \begin{tikzpicture}[scale=.5]
      \foreach \i in {1, 2, 3} {
        \draw ({120*\i+90}:2) -- ({120*\i-30}:2);
        \draw ({120*\i-90}:1) -- ({120*\i+30}:1);
      }
      \foreach[count =\j from 3] \i/\t in {1/a-, 2/a-, 3/empty } {
        \node[] at ({270-120*\i}:{1.7}) {\pgfmathparse{int(Mod(\i,3))}$v_{\pgfmathresult}$};
        \node[\t vertex] (v\i) at ({270-120*\i}:1) {};
      }
      \foreach \i in {1, 2, 3} {
        \pgfmathparse{int(Mod(\i,3))}
        \node[rotate={int(\i/3)*180}, b-vertex, "$u_{\pgfmathresult}$" below] (u\i) at ({90-120*\i}:2) {};
      }
      \foreach \i in {1, 2} 
      \draw[witnessed edge] (u\i) -- (v\i);
    \end{tikzpicture}
    \caption{}
    \label{fig:sun+2}
  \end{subfigure}

\begin{subfigure}[b]{0.2\linewidth}
    \centering
    \begin{tikzpicture}[xscale=.5, every node/.style={empty vertex}]
      \draw (-2, 0) -- (2, 0);

      \node["$u$"] (u) at (0, 1) {};
      \foreach[count=\i] \x in {-1, 1} {
        \node["$v_\i$" below] (v\i) at (\x, 0) {};
        \draw (u) -- (v\i);
        \node["$x_{\inteval{3-\i}}$" below] (v\inteval{3-\i}) at ({\x*2}, 0) {};
      }
      \draw (u) -- ++(1, 0) node["$w$"] {} -- ++ (1, 0) node["$x_{3}$"] {};
    \end{tikzpicture}
    \caption{}
    \label{fig:ab-wheel-0}
  \end{subfigure}
  \quad
  \begin{subfigure}[b]{0.3\linewidth}
    \centering
    \begin{tikzpicture}[scale=1.]
      \draw (1, 0) -- (3, 0);
      \foreach[count=\x from 2] \t/\l/\p in {b-/u/above, b-/x_{3}/below}  \node[\t vertex, "$\l$"] (\l) at ({\x*1.5 - 1}, 1) {};
      
      \foreach[count=\i] \t/\l in {empty /x_{1}, a-/v_{1}, a-/v_{2}} {
        \node[\t vertex, "$\l$" below] (u\i) at (\i, 0) {};
      }
      \node[b-vertex, "$w_{1}$"] (w1) at (1.25, 1) {};
      \node[b-vertex, "$w_{2}$"] (w2) at (2.75, 1) {};            
      \foreach \i in {2, 3} \draw (u) edge[witnessed edge] (u\i) (u\i) -- (x_{3});
      \foreach \i in {1, 2} \draw (u) -- (w\i) -- (u\inteval{\i+1});
      \draw (x_{3}) -- (w2);
      \draw (w2) edge[witnessed edge] (u2);
      \draw (w1) edge[densely dotted, thick] (u1);
    \end{tikzpicture}
    \caption{}
    \label{fig:ab-wheel-1}
  \end{subfigure}
  \caption{Illustrations for the proof of Lemma~\ref{lem:group-5}.  The edge~$w_{1} x_{1}$ in~\ref{fig:ab-wheel-1} may or may not exist.}
\end{figure}

\begin{lemma}\label{lem:group-5}
  The graph~$G^{s}$ cannot contain Configuration~\ref{fig:ab-wheel-simplified},~\ref{fig:(p4+p1)*1-simplified}, or~\ref{fig:ddag+2e-unlabeled} on six vertices.
\end{lemma}
\begin{proof}
  For Configuration~\ref{fig:ab-wheel-simplified}, we use the labels in Figure~\ref{fig:ab-wheel}.
If the clique~$\{u, v_{1}, v_{2}\}$ has a witness~$w$, then~$V(G) = U\cup\{s, w\}$ by Assumption (minimality).
  Since~$G$ is connected, the vertex~$x_{3}$ must be adjacent to~$w$.  
  Then~$G - s$, shown in Figure~\ref{fig:ab-wheel-0}, is isomorphic to Figure~\ref{fig:extended-net}, violating Assumption (minimality).

  In the rest, the clique~$\{u, v_{1}, v_{2}\}$ is not witnessed.
  For~$i = 1, 2$, let~$w_{i}$ be a witness of the edge~$u v_{i}$.
  By Assumption (minimality),~$V(G) = U\cup\{s, w_{1}, w_{2}\}$ (note that the collateral edge~$v_{1} v_{2}$ is witnessed by~$x_3$).
  By Proposition~\ref{lem:P3},~$w_{1}$ and~$w_{2}$ are not adjacent.
  Since~$G$ is connected, the vertex~$x_{3}$ must be adjacent to~$w_{1}$ or~$w_{2}$.
On the other hand,~$x_{3}$ cannot be adjacent to both~$w_{1}$ and~$w_{2}$; otherwise,~$w_{1} u w_{2} x_{3}$ is a hole of~$G$.
  Assume without loss of generality that~$w_{2}$ and~$x_{3}$ are adjacent; see Figure~\ref{fig:ab-wheel-1}.
  Note that~$v_{1} w_{2}$ is a collateral edge witnessed by~$x_{3}$.
In the graph~$(G - \{v_{2}, x_{2}\})^{s}$, the subgraph induced by~$V(G)\setminus \{s, v_{2}, x_{2}\}$ is Configuration~\ref{fig:p5x1-unlabeled} (when~$w_{1}$ and~$x_{1}$ are adjacent) or Figure~\ref{fig:(p4+p1)*1-simplified} (when~$w_{1}$ and~$x_{1}$ are not adjacent), violating A\ref{assumption:minimality}.
Thus,~$G^{s}$ cannot contain Configuration~\ref{fig:ab-wheel-simplified}.

  \bigskip
  For Configuration~\ref{fig:(p4+p1)*1-simplified}, we use the labels in Figure~\ref{fig:(p4+p1)*1}.  
  Note that~$x_{i}$ witnesses~$v u_{i}, i = 1, 2$.
  If~$v x_{0}$ is not collateral, then~$V(G) = U\cup \{s\}$ by Assumption (minimality), but~$x_{0}$ is isolated, violating Assumption.
  Hence,~$v x_{0}$ is collateral, and let~$w$ be its witness.
  By Proposition~\ref{lem:P3},~$w$ cannot be adjacent to~$u_{1}$ or~$u_{2}$.
  If~$w$ is adjacent to both~$x_{1}$ and~$x_{2}$, then~$w x_{1} u_{1}u_{2} x_{2}$ is a hole of~$G$.
  Otherwise, depending on whether~$w$ is adjacent to one of~$x_{1}$ and~$x_{2}$, the subgraph of~$(G - x_{0})^{s}$ induced by~$U\cup\{w\}\setminus\{x_{0}\}$ is Configuration~\ref{fig:p5x1-unlabeled} or~\ref{fig:(p4+p1)*1-simplified}, violating A\ref{assumption:minimality}.
  Thus,~$G^{s}$ cannot contain Configuration~\ref{fig:(p4+p1)*1-simplified}.

  \bigskip
  For Configuration~\ref{fig:ddag+2e-unlabeled} on six vertices, we use the labels in Figure~\ref{fig:sun+2}.
  We show that
  \[
    v_{0}\in N_{G}(s), \text{ and both~$v_{0} u_{1}$ and~$v_{0} u_{2}$ are collateral.}
  \]
  Suppose that~$v_{0}\not\in N_{G}(s)$. 
  Since~$u_{1}v_{1} v_{2} u_{2}$ is an induced path of~$G$, both~$v_{0} u_{1}$ and~$v_{0} u_{2}$ are collateral.
  In~$G[U \cup \{s\}]^{s}$, edge~$v_{i} v_{j}$ with~$i, j \in \{0, 1, 2\}$ is witnessed by~$u_{3- i - j}$, while edges~$u_{1} v_{1}$ and~$u_{2} v_{2}$ are absent without witnesses.
  Thus, the subgraph induced by~$U$ is Configuration~\ref{fig:sun-unlabeled}, violating A\ref{assumption:minimality}.
We end up with the same contradiction if neither~$v_{0} u_{1}$ nor~$v_{0} u_{2}$ is collateral.
  Hence, we may assume without loss of generality that~$v_{0} u_{2}$ is collateral.
  We argue that~$v_{0} u_{1}$ is collateral as well.
  Suppose otherwise, the vertices~$u_{0}$ and~$u_{1}$ witness~$v_{1} v_{2}$ and~$v_{0} v_{2}$, respectively.
  Let~$w$ be a witness of~$v_{0} u_{2}$.
  Note that~$w$ cannot witness~$u_{1} v_{1}$; otherwise,~$w u_{1} v_{1} v_{0} u_{2}$ is a hole of~$G$.
  Thus,~$G[U\cup\{s, w\}]^{s}$ contains all the edges in~$G^{s}[U]$ except~$u_{1} v_{1}$ and possibly~$u_{2} v_{2}$ and~$v_{0} v_{1}$.
  There is a hole~$v_{0} u_{2} v_{1} v_{2}$ if both are missing; $U$ induces a sun if only~$u_{2} v_{2}$ is missing; and~$U$ induces Configuration~\ref{fig:p5x1-unlabeled} if only~$v_{0} v_{1}$ is missing.  All of them violate A\ref{assumption:minimality} because~$U\cup\{s, w\}\subsetneq V(G)$.
  
  For~$i = 1, 2$, let~$w_{i}$ be a witness of the clique~$\{v_{0}, u_{i}, v_{3 - i}\}$, which is witnessed because of Proposition~\ref{lem:unwitnessed}.  We argue that
  \[
    V(G) = U\cup\{s, w_{1}, w_{2}\}, \text{ and~$w_{i}, i= 1, 2,$ witnesses the clique~$\{u_{i}, v_{0}, v_{1}, v_{2}\}$.}
  \]
Note that~$G[U\cup\{s, w_{1}, w_{2}\}]^{s}$ contains all the edges in~$G^{s}[U]$ except possibly~$u_{1} v_{1}$ and~$u_{2} v_{2}$.  If any of them is missing, then~$U$ induces a sun or Configuration~\ref{fig:ddag+e-unlabeled}.
  Thus, $V(G) = U\cup\{s, w_{1}, w_{2}\}$ by Assumption (minimality).
  Since~$w_{i}, i = 1, 2$, is not adjacent to~$u_{3-i}$ (Proposition~\ref{lem:P3} applied to the path $u_{1} v_{0} u_{2}$), it cannot witness~$u_{3-i} v_{3-i}$.
  Thus, the only possible witness of~$u_{i} v_{i}$ has to be~$w_{i}$.

  Since~$N_{G}(u_{0}) = \{v_{0}\}$, the graph~$(G - s)^{u_{0}}$ is well defined.
  In~$(G - s)^{u_{0}}$, the path~$u_{1} v_{1} v_{2} u_{2}$ is induced; the collateral edges~$v_{0} u_{1}$ and~$v_{0} u_{2}$ are witnessed by~$w_{1}$ and~$w_{2}$, respectively; while~$v_{0} v_{1}$ and~$v_{0} v_{2}$ are absent without witnesses.
  Thus,~$v_{0} u_{1} v_{1} v_{2} u_{2}$ is a hole in~$(G - s)^{u_{0}}$, violating A\ref{assumption:minimality}.
  Thus,~$G^{s}$ cannot contain Configuration~\ref{fig:ddag+2e-unlabeled} on six vertices.
\end{proof}
We are left with Configurations~\ref{fig:dag+2e-simplified},~\ref{fig:dag+e-simplified}, and~\ref{fig:dag-unlabeled}.
We start with some special cases of them in the first row of Figure~\ref{fig:group-3}.
In particular, Configurations~\ref{fig:net-labeled} and~\ref{fig:dag+2e-small} are the smallest incarnations of Configuration~\ref{fig:dag+2e-simplified} and~\ref{fig:dag-unlabeled}, respectively.
Configurations~\ref{fig:dag+e-small} and~\ref{fig:dag-3} are Configurations~\ref{fig:dag+e-simplified} with six vertices and~\ref{fig:dag-unlabeled} with seven vertices, respectively, where~$v\in N_{G}(s)$ and~$x_{2}\in V(G)\setminus N_{G}[s]$.
Figure~\ref{fig:long-dag-2} is Configuration~\ref{fig:dag-unlabeled} with seven or more vertices and~$v\in V(G)\setminus N_{G}(s)$.

\begin{figure}[ht]
  \centering \small
  \begin{subfigure}[b]{0.17\linewidth}
    \centering
    \begin{tikzpicture}[scale=.75]
      \draw (1, 0) -- (4, 0);
      \node[empty vertex, "$v_{0}$" right] (a) at (2.5, 1) {};
      \draw (a) -- ++(0, .75) node[b-vertex, "$x_{0}$" right] {};

      \foreach[count=\i] \t/\l in {b-/x_{1}, empty /v_{1}, empty /v_{2}, b-/x_{2}} {
        \node[\t vertex, "$\l$" below] (u\i) at (\i, 0) {};
      }
      \foreach \i in {2,3}
      \draw (a) -- (u\i);
    \end{tikzpicture}
    \caption{}
    \label{fig:net-labeled}
  \end{subfigure}
  \begin{subfigure}[b]{0.14\linewidth}
    \centering
    \begin{tikzpicture}[scale=.75]
      \draw (1, 0) -- (3, 0);
      \node[a-vertex, "$v$" right] (a) at (2, 1) {};
      \draw (a) -- ++(0, .75) node[b-vertex, "$u_{0}$" right] {};

      \foreach[count=\i] \t/\l in {b-/u_{1}, empty /x, b-/u_{2}} {
        \draw (a) -- (\i, 0) node[\t vertex, "$\l$" below] (u\i) {};
      }
      \foreach \i in {1, 3}
      \draw[witnessed edge] (a) -- (u\i);
    \end{tikzpicture}
    \caption{}
    \label{fig:dag+2e-small}
  \end{subfigure}
  \begin{subfigure}[b]{0.2\linewidth}
    \centering
    \begin{tikzpicture}[scale=.75]
      \node[a-vertex, "$v$" right] (v) at (3., 1) {};
      \draw (v) -- ++(0, .75) node[b-vertex, "$u_{0}$" right] {};
      \draw (1, 0) -- (2, 0) (3, 0) -- (4, 0);
      \foreach[count=\i] \t/\v in {b-/u_{1}, a-/x_{1}, b-/x_{2}, b-/u_{2}} {
        \node[\t vertex, "$\v$" below] (u\i) at (\i, 0) {};
      }
\draw (v) -- (u2);
      \draw[witnessed edge] (v) -- (u4);      
\uncertain{u3}{u2};
      \uncertain{v}{u3};
    \end{tikzpicture}
    \caption{}
    \label{fig:dag+e-small}
  \end{subfigure}
  \begin{subfigure}[b]{0.22\linewidth}
    \centering
    \begin{tikzpicture}[scale=.75, xscale=.95]
      \draw (1, 0) -- (5, 0);
      \node[a-vertex, "$v$" right] (a) at (3, 1) {};
      \draw (a) -- ++(0, .75) node[b-vertex, "$u_{0}$" right] {};

\foreach[count=\i] \t/\l in {b-/u_{1}, empty /x_{1}, b-/x_{2}, empty /x_{3}, b-/u_{2}} {      
        \node[\t vertex, "$\l$" below] (u\i) at (\i, 0) {};
      }
      \foreach \i in {2, 4}
      \draw (a) -- (u\i);
\uncertain{a}{u3};
    \end{tikzpicture}
    \caption{}
    \label{fig:dag-3}
  \end{subfigure}
  \begin{subfigure}[b]{0.22\linewidth}
    \centering
    \begin{tikzpicture}[scale=.75, xscale=.95]
      \draw (1, 0) -- (3, 0) (4, 0) -- (5, 0);
      \draw[dashed] (4, 0) -- (3, 0);
      \node[b-vertex, "$v$" right] (a) at (3, 1) {};
      \draw (a) -- ++(0, .75) node[b-vertex, "$u_{0}$" right] {};

      \foreach[count=\i] \t/\l in {b-/u_{1}, empty /x_{1}, empty /x_{2}, empty /x_{p}, b-/u_{2}} {
        \node[\t vertex, "$\l$" below] (u\i) at (\i, 0) {};
      }
      \foreach \i in {2, 3, 4}
      \draw (a) -- (u\i);
    \end{tikzpicture}
    \caption{$p \ge 3$}
    \label{fig:long-dag-2}
  \end{subfigure}

  \begin{subfigure}[b]{0.17\linewidth}
    \centering
    \begin{tikzpicture}[yscale=.8]
      \draw (3, 0) -- (4, 0);
      \node[a-vertex, "$v_{0}$" left] (a) at (2.5, 1) {};
      \draw (a) -- ++(0, .75) node[b-vertex, "$x_{0}$" right] (x0) {};

      \foreach[count=\i from 2] \t/\l in {b-/v_{1}, a-/v_{2}, b-/x_{2}} {
        \node[\t vertex, "$\l$" below] (u\i) at (\i, 0) {};
      }
      \foreach \i in {2,3}
      \draw (a) -- (u\i);
      \draw (a) -- ++ (.75, 0) node[b-vertex, "$w_{0}$" right] (w0) {};
      \draw (x0) -- (w0) -- (u3);
      \draw[witnessed edge] (u2) -- (u3);
    \end{tikzpicture}
    \caption{}
    \label{fig:lem:net-1}
  \end{subfigure}
  \begin{subfigure}[b]{0.2\linewidth}
    \centering
    \begin{tikzpicture}[scale=.8, every node/.style={empty vertex}]
      \node["$x_{2}$" below] (c) at (0, 0.2) {};
      \foreach \i in {0, 1, 2} 
      \draw ({120*\i-30}:1.75) -- ({120*\i-30}:1) -- ({120*\i+90}:1);
      \foreach[count=\i from 0] \p/\l in {right/x_{1}, left/x_{0}, right/w} {
        \node["$\l$" \p] (u\i) at ({120*\i+90}:1.75) {};
        \node["$v_{\i}$" \p] (v\i) at ({120*\i+90}:1) {};
        \draw (c) -- (v\i);
      }
    \end{tikzpicture}
    \caption{}
    \label{fig:net-0}
  \end{subfigure}
  \begin{subfigure}[b]{0.25\linewidth}
    \centering
    \begin{tikzpicture}[yscale=.8]
      \draw (2, 0) -- (4, 0);
\draw (2.5, 1) node[a-vertex] (a) {} ++ (-.6em, .7em) node {$v_{0}$};
      \draw (a) -- ++(0, .75) node[b-vertex, "$x_{0}$" right] (x0) {};
      \foreach[count=\i from 2] \t/\l in {b-/v_{1}, b-/v_{2}, b-/x_{2}} {
        \node[\t vertex, "$\l$" below] (u\i) at (\i, 0) {};
      }
      \foreach \i/\p in {1/right, 2/left}
      \draw (a) -- ++ ({(1.5 - \i)*1.5}, -.25) node[b-vertex, "$w_{\i}$" \p] (w\i) {} -- (u\inteval{4-\i});
      \foreach \i in {2,3}
      \draw[witnessed edge] (a) -- (u\i);
      \draw (x0) -- (w1);
\end{tikzpicture}
    \caption{}
    \label{fig:lem:net-2}
  \end{subfigure}
  \begin{subfigure}[b]{0.25\linewidth}
    \centering
    \begin{tikzpicture}[yscale=.8]
      \draw (2, 0) -- (3, 0);
      \draw (2.5, 1) node[a-vertex] (a) {} ++ (-.6em, .7em) node {$v_{0}$};
      \draw (a) -- ++(0, .75) node[b-vertex, "$x_{0}$" right] (x0) {};
      \foreach[count=\i from 2] \t/\l in {a-/v_{1}, b-/v_{2}} {
        \node[\t vertex, "$\l$" below] (u\i) at (\i, 0) {};
      }
      \foreach \i/\p in {1/right, 2/left}
      \draw (a) -- ++ ({(1.5 - \i)*1.5}, -.25) node[b-vertex, "$w_{\i}$" \p] (w\i) {} -- (u\inteval{4-\i});
      \foreach \i in {2,3}
      \draw (a) -- (u\i);
      \node[b-vertex, "$w_{0}$" below] (w0) at (2.5, -.5) {};
      \draw[witnessed edge] (w0) -- (a) -- (u3);
      \draw[witnessed edge] (u2) -- (u3);
      \draw (u2) -- (w0) -- (u3);
      \draw[bend left] (w0) -- (w2);
    \end{tikzpicture}
    \caption{}
    \label{fig:lem:net-3}
  \end{subfigure}
  \caption{Illustrations for Lemma~\ref{lem:net}.}
  \label{fig:group-3}
\end{figure}

\begin{lemma}\label{lem:net}
  The graph~$G^{s}$ cannot contain any configuration in Figure~\ref{fig:net-labeled}--\ref{fig:long-dag-2}.
\end{lemma}
\begin{proof}Configuration~\ref{fig:net-labeled}.
  By Corollary~\ref{lem:P4},
  \[
    \text{$\{v_{0}, v_{1}, v_{2}\}$ is a clique of~$G$.}
  \]
  We argue that 
\begin{equation}
    \label{eq:25}
    \text{for all~$i = 0, 1, 2$, the edge~$v_{i} x_{i}$ is not collateral.}
  \end{equation}
  By symmetry, it suffices to consider~$i = 0$.
  Suppose for contradiction that~$v_{0} x_{0}$ is collateral, i.e.,~$v_{0}\in N_{G}(s)\cap N_{G}(x_{0})$. 
  Let~$w_{0}$ be a witness of the edge~$v_{0} x_{0}$.
  By Proposition~\ref{lem:nonadjacent->dominate},~$N_{G^{s}}(w_{0}) \cap U \subseteq \{x_{0}, v_{0}, v_{1}, v_{2}\}$.
  By A\ref{assumption:replacable}, they cannot be equal.
  If~$N_{G^{s}}(w_{0}) \cap U = \{x_{0}, v_{0}\}$, then~$(G - x_{0})^{s}$ contains an induced net (replacing~$x_{0}$ with~$w_{0}$ in~$U$).
  Assume without loss of generality that~$N_{G^{s}}(w_{0}) \cap U = \{x_{0}, v_{0}, v_{2}\}$; see Figure~\ref{fig:lem:net-1}.
  By Proposition~\ref{lem:P3},~$v_{2}\in N_{G}(s)$.
  If~$v_{1}\not\in N_{G}(s)$, then~$(G - x_{1})^{s}$ contains Configuration~\ref{fig:whipping-top-1-unlabeled} ($U\cup\{w_{0}\}\setminus \{x_{1}\}$).
  Hence,~$v_{1}\in N_{G}(s)$.
  Since~$|V(G)| > 7$, the clique~$\{v_{0}, v_{1}, v_{2}\}$ has a witness by Lemma~\ref{lem:witness-N[s]}; let it be~$w_{3}$.
  By Proposition~\ref{lem:nonadjacent->dominate},~$N_{G}(w_{3})\cap U = \emptyset$.
  By assumption,~$G$ is connected, and~$w_{3}$ has a neighbor, which is a witness~$w_{j}$ of~$v_{j} x_{j}, j = 0 ,1, 2$ by Assumption (minimality).
  Then~$N_{G^{s}}(w_{j}) = N_{G^{s}}(v_{j})$ by Proposition~\ref{lem:nonadjacent->dominate}, violating A\ref{assumption:replacable}.

  By A\ref{assumption:N(s)},~$\{v_{0}, v_{1}, v_{2}\}\cap N_{G}(s)\ne \emptyset$.
  We may assume that
  \[
    v_{0} \in N_{G}(s).
  \]
  We show that
  \[
\text{$\{v_{0}, v_{1}, v_{2}\}$ cannot be witnessed.}
  \]
  Suppose that~$\{v_{0}, v_{1}, v_{2}\}$ has a witness~$w$.
  By~\eqref{eq:25} and the minimality,~$V(G) = U\cup \{s, w\}.$
  By Proposition~\ref{lem:nonadjacent->dominate},~$w$ is not adjacent to~$x_{1}$ or~$x_{2}$.
  By A\ref{assumption:replacable}, it is not adjacent to~$x_{0}$ either; otherwise,~$N_{G^{s}}(w)\cap U = N_{G^{s}}(v_{0})\cap U$.
If~$\{v_{0}, v_{1}, v_{2}\}\subseteq N_{G}(s)$, then~$w$ is isolated; if~$\{v_{1}, v_{2}\}\cap N_{G}(s) = \emptyset$, then~$x_{0}$ is isolated.  Both violate Assumption (connectivity).
  Hence, assume without loss of generality that~$v_{1}$ is in~$N_{G}(s)$ and~$v_{2}$ is not.
  Then~$G-s$, shown in Figure~\ref{fig:net-0}, is isomorphic to~$\overline{S_{3}^{+}}$, violating Assumption (minimality). 

  If~$U\cap N_{G}(s) = \{v_{0}, v_{1}, v_{2}\}$, then~$G$ is isomorphic to~$\overline{S_{3}^{+}}$ by Lemma~\ref{lem:witness-N[s]}.  But~$|V(G)| > |U\cup \{s\}| = 7$ because the edge~$v_{0} v_{1}$ has a witness that is not in~$U\cup \{s\}$.
  If~$v_{0}$ is the only vertex in~$U\cap N_{G}(s)$, then for~$i = 1, 2$, we take a witness~$w_{i}$ of edge~$v_{0} v_{3-i}$; see Figure~\ref{fig:lem:net-2}.
   Since~$\{v_{0}, v_{1}, v_{2}\}$ is not witnessed,~$w_{1} \ne w_{2}$ and they are not adjacent by Proposition~\ref{lem:P3}.
   By Assumption (minimality),~$V(G) = U\cup \{s, w_{1}, w_{2}\}$.
   Since~$G$ is connected,~$x_{0}$ is adjacent to at least one of~$w_{1}$ and~$w_{2}$.
   If~$x_{0}$ is adjacent to both~$w_{1}$ and~$w_{2}$, then~$x_{0} w_{1} v_{2} v_{1} w_{2}$ is a hole of~$G$.
   Hence, assume without loss of generality that~$x_{0}$ is adjacent to~$w_{1}$ but not~$w_{2}$.
   Then~$(G - x_{1})^{s}$ contains an induced whipping top ($V(G)\setminus \{s, x_{1}\}$).

   Hence,~$|U\cap N_{G}(s) | = 2$.
   Assume without loss of generality that~$U\cap N_{G}(s) = \{v_{0}, v_{1}\}$.
   For~$i = 0, 1, 2$, we take a witness~$w_{i}$ of~$\{v_{0}, v_{1}, v_{2}\}\setminus \{v_{i}\}$.
   They are distinct because~$\{v_{0}, v_{1}, v_{2}\}$ is not witnessed.
   Moreover,~$w_{0}$ and~$w_{1}$ are not adjacent (Proposition~\ref{lem:P3}), and~$w_{2}$ is not adjacent to~$v_{2}$.
   See Figure~\ref{fig:lem:net-3}.
   By Assumption (minimality),
   \[
     V(G) = U\cup \{s, w_{0}, w_{1}, w_{2}\}.
   \]
   For~$i = 0, 1$, if the edge~$v_{i} w_{i}$ is present in~$G^{s}$, then it must be collateral and witnessed by~$w_{2}$.
   They cannot be both present; otherwise,~$w_{0} w_{2} w_{1} v_{2} $ is a hole of~$G$.
   Assume without loss of generality that~$w_{1}$ and~$w_{2}$ are not adjacent.
Then~$(G - x_{0})^{s}$ contains Configuration~\ref{fig:sun-unlabeled} ($\{v_{0}, v_{1}, v_{2}, w_{0}, w_{1}, w_{2}\}$, when~$w_{0}$ and~$w_{2}$ are not adjacent), or Configuration~\ref{fig:(p4+p1)*1-simplified} ($\{x_{0}, v_{0}, w_{2}, w_{0}, v_{2}, w_{1}\}$, when~$w_{0}$ and~$w_{2}$ are adjacent).
   Thus,~$G^{s}$ cannot contain Configuration~\ref{fig:net-labeled}.

\bigskip
   Configuration~\ref{fig:dag+2e-small}.
  For~$i = 1, 2$, let~$w_{i}$ be a witness of~$v u_{i}$.
  By Proposition~\ref{lem:P3},
  \[
    \text{there is no edge between~$\{u_{1}, w_{1}\}$ and~$\{u_{2}, w_{2}\}$.}
  \]
  We argue that
  \begin{equation}
    \label{eq:24}
    x\in N_{G}(s).\end{equation}
  Suppose for contradiction that~$x\not\in N_{G}(s)$.
  If~$v x$ is not collateral, then~$x$ is a witness of both~$v u_{1}$ and~$v u_{2}$, violating Proposition~\ref{lem:P3}.
  Hence,~$v x$ is collateral; let~$w$ be a witness of the edge~$v x$.
  By Assumption (minimality),~$V(G) = U\cup\{s, w, w_{1}, w_{2}\}$.
  If~$w\not\in\{w_{1}, w_{2}\}$, then~$(G - w)^{s}$, where~$v x$ is absent, contains a hole~$v u_{1} x u_{2}$.
  Assume without loss of generality that~$w = w_{1}$.
  Since~$G$ is connected,~$u_{0}$ is adjacent to at least one of~$w_{1}$ and~$w_{2}$.
  On the other hand, if~$u_{0}$ is adjacent to both~$w_{1}$ and~$w_{2}$, then~$u_{0} w_{1} x w_{2}$ or~$u_{0} w_{1} x u_{2}w_{2}$ is a hole of~$G$.
  Hence,~$u_{0}$ is adjacent to either~$w_{1}$ or~$w_{2}$.
  If~$w_{2}$ is not adjacent to~$x$, then~$(G - u_{1})^{s}$ contains Configuration~\ref{fig:p5x1-unlabeled} ($V(G)\setminus \{s, u_{1}\}$).
  Hence,~$w_{2}$ is adjacent to~$x$.
  If~$u_{0}$ is adjacent to~$w_{1}$, then~$(G - u_{2})^{s}$ contains Configuration~\ref{fig:ddag+e-unlabeled} ($V(G)\setminus \{s, u_{2}\}$); and it is symmetric if~$u_{0}$ is adjacent to~$w_{2}$.

  Second, we argue that~$w_{1}, w_{2}\not\in N_{G}(x)$, i.e., 
  \begin{equation}
    \label{eq:8}
\text{For~$i = 1, 2$, the vertex~$w_{i}$ witnesses~$\{v, u_{i}, x\}$.}    
  \end{equation}
  It suffices to show that both~$\{v, u_{1}, x\}$ and~$\{v, u_{2}, x\}$ are witnessed, and then~\eqref{eq:8} follows from Assumption (minimality).
  For~$i = 1, 2$, let~$w'_{i}$ be a witness of~$u_{i} x$ if it is collateral; note that~$u_{i} x$ is collateral by~Proposition~\ref{lem:unwitnessed} if~$\{v, u_{i}, x\}$ is not witnessed.
  If neither~$\{v, u_{1}, x\}$ nor~$\{v, u_{2}, x\}$ is witnessed, then~$G[U\cup \{w_{1}, w'_{1}, w_{2}, w'_{2}\})^{s}$ contains a hole~$v u_{1} x u_{2}$, where~$v x$ is absent without a witness.
Hence, assume without loss of generality that~$\{v, u_{1}, x\}$ is witnessed and~$\{v, u_{2}, x\}$ is not.
  Then~$V(G) = U\cup\{s, w_{1}, w_{2}, w'_{2}\}$ by Assumption (minimality).
  By Proposition~\ref{lem:P3} (considering paths~$v u_{2} x$ and~$u_{1} v u_{2}$), the witnesses~$w_{1}$, $w_{2}$, and~$w'_{2}$ are different and there is no edge among them.
  Thus, neither~$v w'_{2}$ nor~$x w_{2}$ is present in~$G^{s}$, and~$(G - u_{0})^{s}$ contains Configuration~\ref{fig:sun-unlabeled} ($V(G)\setminus \{s, u_{1}, u_{0}\}$), violating A\ref{assumption:minimality}.
  
  Hence,~$V(G) = U\cup\{s, w_{1}, w_{2}\}.$
  By~Proposition~\ref{lem:nonadjacent->dominate} and~\eqref{eq:24},~$u_{0}$ is adjacent to neither~$w_{1}$ nor~$w_{2}$.
  Thus,~$N_{G}(u_{0}) = \{x\}$.
  The graph~$(G -s)^{u_{0}}$ contains a hole~$v u_{1} x u_{2}$: for~$i = 1, 2$, the edge~$v u_{i}$ is not collateral, and the edge~$x u_{i}$ is witnessed by~$w_{i}$; the edge~$v x$ is absent without a witness.
  Thus,~$G^{s}$ cannot contain Configuration~\ref{fig:dag+2e-small}.

  \bigskip
  Configuration~\ref{fig:dag+e-small}.
  By Corollary~\ref{lem:P4}, the edge~$x_{1} x_{2}$ is collateral.
  Let~$w_{1}$ and~$w_{2}$ be witnesses of~$v x_{1}$ and~$v u_{2}$, respectively.
  We argue that
  \begin{equation}
    \label{eq:18}
    \text{$w_{1}$ is adjacent to~$x_{2}$; i.e.,~$w_{1}$ witnesses the clique~$\{v, x_{1}, x_{2}\}$.}    
  \end{equation}
If~$v x_{2}$ is not collateral, then~$\{v, x_{1}, x_{2}\}$ is witnessed by Proposition~\ref{lem:unwitnessed}, and the statement follows from Assumption (minimality).   
  Now suppose that~$v x_{2}$ is collateral.
  By Proposition~\ref{lem:nonadjacent->dominate},~$N_{G^{s}}(w_{1})\cap U \subseteq \{v, x_{1}, x_{2}\}$.
  If~$w_{1}$ is not adjacent to~$x_{2}$, then~$(G - u_{2})^{s}$ contains Configuration~\ref{fig:ab-wheel-simplified} ($U\cup \{w_{1}\}\setminus \{u_{2}\}$), violating A\ref{assumption:minimality}.

  By Assumption (minimality),~$V(G) = U\cup\{s, w_{1}, w_{2}\}$.
  If~$w_{2}\not\in U$, then~$(G-w_{2})^{s}[U]$ is isomorphic to the net.
  Hence, $w_{2}\in U$ and it is~$x_{2}$.
  Then~$w_{1} x_{2} u_{2} v u_{1}$ is a path of length four in~$G$, and~$x_{1}$ is adjacent to $x_{2}$, $u_{2}$, $v$, and~$u_{0}$.  The subgraph~$G - s$ is isomorphic to~$\otimes(1, 3)$, violating the minimality.
  Thus,~$G^{s}$ cannot contain Configuration~\ref{fig:dag+e-small}.

  \bigskip
  Configuration~\ref{fig:dag-3}.
  We argue that \[
      x_{1}\in N_{G}(s)\cap N_{G}(x_{2})\setminus N_{G}(u_{1}),
       \text{i.e., the edge~$x_{1} x_{2}$ is collateral and~$u_{1} x_{1}$ is not.}
    \]
By Corollary~\ref{lem:P4}, the edge~$v x_{1}$ is collateral.
    If~$x_{1}\not\in N_{G}(s)$, then~$(G - u_{1})^{s}$ contains Configuration~\ref{fig:dag+e-simplified} ($U\setminus \{u_{1}\}$, when~$x_{3} \in N_{G}(s)$) or Configuration~\ref{fig:dag+2e-simplified} ($U\setminus \{u_{1}, u_{2}\}$, otherwise).
    Hence~$x_{1}\in N_{G}(s)$, and~$x_{3}\in N_{G}(s)$ by symmetry.
    By Corollary~\ref{lem:P4},~$x_{1}\in N_{G}(x_{2})$.
    Suppose that~$u_{1} x_{1}$ is collateral and let~$w$ be its witness.
  By Proposition~\ref{lem:P3},~$w$ is not adjacent to~$x_{2}$.
  By Proposition~\ref{lem:nonadjacent->dominate},~$N_{G^{s}}(w)\cap U\subseteq \{v, u_{1}, x_{1}\}$.
  Then~$(G - u_{1})^{s}$ contains an annotated copy of the same configuration (when~$v \not\in N_{G^{s}}(w)$) or Configuration~\ref{fig:dag+e-simplified} (when~$v \in N_{G^{s}}(w)$), both with the vertex set~$U\cup\{w\}\setminus \{u_{1}\}$.

  Let~$w_{1}$ be a witness of~$v x_{1}$. With the same argument as \eqref{eq:18},~$w_{1}$ witnesses the clique~$\{v, x_{1}, x_{2}\}$.

  By symmetry, $x_{3}\in N_{G}(s)$, the edge~$x_{2} x_{3}$ is collateral and~$u_{2} x_{3}$ is not, and the clique~$\{v, x_{2}, x_{3}\}$ is witnessed; let~$w_{2}$ be its witness.
  By Assumption (minimality),~$V(G) = U\cup \{s, w_{1}, w_{2}\}$.
  By Proposition~\ref{lem:P3}, vertices~$w_{1}$ and~$w_{2}$ are not adjacent.
In~$G$, the neighborhoods of~$w_{1}$,~$w_{2}$,~$u_{1}$,~$u_{2}$, and~$u_{0}$ are~$\{x_{2}, x_{3}\}$,~$\{x_{1}, x_{2}\}$,~$\{v, x_{3}\}$,~$\{v, x_{1}\}$, and~$\{x_{1}, x_{3}\}$, respectively.
  With the possible exception of~$v x_{2}$, all the edges among~$\{v, x_{1}, x_{2}, x_{3}\}$ are present in~$G$.
  Depending on whether~$v x_{2}$ is present, $G - s$ is isomorphic to either~$\otimes(1, 1, 1, 1)$ or the graph in~Figure~\ref{fig:the-weird}.  They violate the minimality, and hence~$G^{s}$ cannot contain Configuration~\ref{fig:dag-3}.

  \bigskip
  Configuration~\ref{fig:long-dag-2}.
  By A\ref{assumption:N(s)},~$N_{G}(s)\subseteq \{x_{1}, x_{2}, \ldots, x_{p}\}$, and all vertices in~$V(G)\setminus U$ are witnesses of collateral edges in~$G[U]$.
  By Proposition~\ref{lem:nonadjacent->dominate},~$N_{G}(s)\subseteq N_{G}(v)$, and~$u_{0}$ is not adjacent to any vertex in~$V(G)\setminus U$.  Thus,~$N_{G}(u_{0}) = N_{G}(s)\cup \{v\}$,
  and~$u_{0}$ is simplicial in~$G$.
  Let~$P$ denote the path~$u_{1} x_{1} \cdots x_{p} u_{2}$ of~$G^{s}$.  For convenience, we may use~$x_{p+1}$ to refer to~$u_{2}$. 
  For each collateral edge on~$P$, take a witness; let~$W$ denote this set of witnesses.  We consider the graph
  \[
    H = G[U\cup W]^{u_{0}}.
  \]
  Note that~$G[U\cup W]$ is a circular-arc graph because it does not contains~$s$.
  In~$H$, both edges~$v u_{1}$ and~$v u_{2}$ are present and non-collateral; and~$P$ remains a path: an edge on~$P$ is collateral if and only if it is collateral in~$G^{s}$, and the same vertex witnesses them in both graphs.
  By A\ref{assumption:minimality} (note that~$s\not\in V(H)$), the graph~$H$ is chordal, and hence
\[
    v x_{i}\in E(H), i = 1, \ldots, p.
  \]
  By Lemma~\ref{thm:forbidden-configurations-unrestricted},~$H[\{v, u_{1}, x_{1}, x_{2}, x_{3}, x_{4}\}]$ is not Configuration~\ref{fig:p5x1-unlabeled unrestricted}, and hence
  \[
    x_{2}\in N_G(s)\subsetneq N_G(u_{0}).
  \]
  Let~$w$ be a witness of the collateral edge~$v x_{2}$ in~$H$.  It cannot be from~$U$; hence~$w\in W$.
  By the selection of~$W$, the vertex~$w$ must be a witness of the edge~$x_{1} x_{2}$ or~$x_{2} x_{3}$ in~$G^{s}$.
  Since~$w$ is a witness of~$v x_{2}$ in~$H$, it is not adjacent to~$v$ in~$G$, which further implies that
  \[
    v w \not\in E(G^{s}).
  \]
  By Propositions~\ref{lem:nonadjacent->dominate} and~\ref{lem:P3}, at most one of~$x_{1}$ and~$x_{3}$ is in~$N_{G^{s}}(w)$.
If~$w$ is a witness of the edge~$x_{1} x_{2}$, then~$(G - u_{1})^{s}$ contains Configuration~\ref{fig:dag-unlabeled}. of order~$p + 3$ ($U\cup\{w\}\setminus \{u_{1}, x_{1}\}$).
  Otherwise,~$(G - u_{2})^{s}$ contains an induced net ($\{u_{0}, u_{1}, v, w, x_{1}, x_{2}\}$).  Both violate A\ref{assumption:minimality}.
  Thus,~$G^{s}$ cannot contain Configuration~\ref{fig:long-dag-2}.
\end{proof}

Note that in all the configurations in Figure~\ref{fig:group-4}, $v\in N_{G}(s)$, both~$u_{1} v$ and~$u_{2} v$ are edges of~$G$, and the differences among them are how many of these two edges are in~$G^{s}$, where they are collateral.

\begin{figure}[ht]
  \centering \small
  \begin{subfigure}[b]{0.2\linewidth}
    \centering
    \begin{tikzpicture}[scale=.75]
      \draw (1, 0) -- (2, 0) (3, 0) -- (4, 0);
      \draw[dashed] (2, 0) -- (3, 0);
      \node[a-vertex, "$v$" right] (a) at (2.5, 1) {};
      \draw (a) -- ++(0, .75) node[b-vertex, "$u_{0}$" right] {};

      \foreach[count=\i] \t/\l in {b-/u_{1}, empty /x_{1}, empty /x_{p}, b-/u_{2}} {
        \draw (a) -- (\i, 0) node[\t vertex, "$\l$" below] (u\i) {};
      }
      \foreach \i in {1, 4}
      \draw[witnessed edge] (a) -- (u\i);
    \end{tikzpicture}
    \caption{$p \ge 2$}
    \label{fig:dag+2e}
  \end{subfigure}
  \,
  \begin{subfigure}[b]{0.2\linewidth}
    \centering
    \begin{tikzpicture}[scale=.75]
      \node[a-vertex, "$v$" right] (v) at (3., 1) {};
      \draw (v) -- ++(0, .75) node[b-vertex, "$u_{0}$" right] {};
      \draw (1, 0) -- (2, 0) (3, 0) -- (4, 0);
      \foreach[count=\i] \t/\v in {b-/u_{1}, a-/x_{1}, empty /x_{p}, b-/u_{2}} {
        \node[\t vertex, "$\v$" below] (u\i) at (\i, 0) {};
      }
      \draw[dashed] (u2) -- (u3);
      \foreach \i in {2, 3, 4} \draw (v) -- (u\i);
\draw[witnessed edge] (v) -- (u4) {};
    \end{tikzpicture}
    \caption{$p \ge 2$}
    \label{fig:dag+e}
  \end{subfigure}
  \begin{subfigure}[b]{0.25\linewidth}
    \centering
    \begin{tikzpicture}[scale=.75]
      \draw (1, 0) -- (3, 0) (4, 0) -- (5, 0);
      \draw[dashed] (4, 0) -- (3, 0);
      \node[a-vertex, "$v$" right] (a) at (3, 1) {};
      \draw (a) -- ++(0, .75) node[b-vertex, "$u_{0}$" right] {};

      \foreach[count=\i] \t/\l in {b-/u_{1}, empty /x_{1}, empty /x_{2}, empty /x_{p}, b-/u_{2}} {
        \node[\t vertex, "$\l$" below] (u\i) at (\i, 0) {};
      }
      \foreach \i in {2, 3, 4}
      \draw (a) -- (u\i);
    \end{tikzpicture}
    \caption{$p \ge 3$}
    \label{fig:long-dag-1}
  \end{subfigure}
  \caption{Illustrations for Lemma~\ref{lem:long-dags}.}
  \label{fig:group-4}
\end{figure}

\begin{lemma}\label{lem:long-dags}
  The graph~$G^{s}$ cannot contain Configuration~\ref{fig:dag+2e-simplified},~\ref{fig:dag+e-simplified}, or~\ref{fig:dag-unlabeled}.
\end{lemma}
\begin{proof}
  By Lemma~\ref{lem:net}, if~$G^{s}$ contains Configuration~\ref{fig:dag+2e-simplified}, then it consists of at least six vertices; if~$G^{s}$ contains Configuration~\ref{fig:dag-unlabeled}, then $v\in N_{G}(s)$.
  We use the labels in Figure~\ref{fig:group-4}.
  Let~$P$ denote the path~$u_{1} x_{1} \cdots x_{p} u_{2}$ of~$G^{s}$.  For convenience, we may use~$x_{0}$ and~$x_{p+1}$ to refer to~$u_{1}$ and~$u_{2}$, respectively. 
  
  Note that
  \begin{equation}
    \label{eq:2}
    N_G(u_{0}) \cap U = N_G(s)\setminus\{v\}.
  \end{equation}
  By Assumption (minimality), each vertex in~$V(G) \setminus U$ is a witness of some collateral edge of~$G^{s}[U]$.
  By Proposition~\ref{lem:nonadjacent->dominate}, an edge witnessed by a vertex in~$N_G(u_{0}) \setminus U$ must be between~$v$ and a vertex from~$V(P) \setminus N_G[s]$.
  If two vertices in~$N_G(u_{0})\setminus U$ are not adjacent, we can find a path in~$G^{s}$ connecting them with all internal vertices from~$P$, which forms a hole with~$u_{0}$ in~$G^{s}$. Thus,~$N_G(u_{0})\setminus U$ is a clique.
  By Proposition~\ref{lem:nonadjacent->dominate}, all the edges between~$N_G(u_{0}) \cap U$ and~$N_G(u_{0})\setminus U$ are present in~$G$.
  In summary, we have
  \[
    \text{the vertex~$u_{0}$ is simplicial in~$G$.}
  \]
  
  For each~$i = 0, \ldots, p$, if the edge~$x_{i} x_{i+1}$ is collateral, let~$w_{i}$ be its witness.
  By Proposition~\ref{lem:nonadjacent->dominate},
  
  \begin{equation}
    \label{eq:4}
    w_{i}\not\in N_{G}(u_{0}).
  \end{equation}
Let~$W$ denote these witnesses, and let
 \[
   H = (G[U\cup W\cup\{s\}])^{u_{0}},
 \]
 which is well defined because~$u_{0}$ is simplicial in~$G$, hence also in~$G[U\cup W\cup\{s\}]$.
By~\eqref{eq:2} and~\eqref{eq:4}, all edges on~$P$ remain in~$H$: for each~$i = 0, \ldots, p$, the edge~$x_{i} x_{i+1}$ is collateral in~$G^{s}$ if and only if it is collateral in~$H$, both witnessed by~$w_{i}$.
 Thus,~$P$ remains a path in~$H$.
 On the other hand, since none of~$v$, $u_{1}$, and~$u_{2}$ is in~$N_{G}(u_{0})$, both edges~$u_{1} v$ and~$u_{2} v$ are present in~$H$, as in~$G$.
 We note that
 \begin{equation}
   \label{eq:9}
   \text{$v$ is adjacent to all the vertices on~$P$ in~$H$.}   
 \end{equation}
 Otherwise, there is a hole in~$(G[U\cup W])^{u_{0}}$, violating A\ref{assumption:minimality} because~$U\cup W\subsetneq V(G)$.  (Note that~$N_{H}(s) = \{v\}$.)

 Next, we argue that \begin{equation}
   \label{eq:5}
   x_{i} \in N_G(s), i=1,\ldots,p.
 \end{equation}
 Suppose for contradiction that there is~$i\in \{1, \ldots, p\}$ such that~$x_{i}\not\in N_{G}(s)$.
By \eqref{eq:9},~$x_{i} v\in E(G)$, and hence it is collateral in~$G^{s}$.  
If~$i < p$ in Figure~\ref{fig:dag+2e} or~\ref{fig:dag+e}, then~$(G - u_{1})^{s}$ contains Configuration~\ref{fig:dag+2e-simplified} ($U\setminus \{u_{1}, x_{2}, \ldots, x_{i-1}\}$), and it is symmetric if~$i = p$ in Figure~\ref{fig:dag+2e}.
   If~$i > 2$ in Figure~\ref{fig:dag+e} or~\ref{fig:long-dag-1}, then~$(G - u_{2})^{s}$ contains Configuration~\ref{fig:dag+e-simplified} ($U\setminus \{x_{i+1}, x_{i+2}, \ldots, x_{p}, u_{2}\}$), and it is symmetric if~$i < p - 1$ in Figure~\ref{fig:long-dag-1}.
   The remaining cases are~$i = p = 2$ in Figure~\ref{fig:dag+e} and~$i = p - 1 = 2$ in Figure~\ref{fig:long-dag-1}.  We have seen that they cannot exist in Lemma~\ref{lem:net}: they are Figure~\ref{fig:dag+e-small} and Figure~\ref{fig:dag-3}.
   As a result,
   \[
     \text{for~$i = 1, \ldots, p-1$, the vertex~$w_{i}$ witnesses of~$\{v, x_{i}, x_{i+1}\}$.}
   \]
   Since~$|V(G)| > 7$, the clique~$\{v, x_{i}, x_{i+1}\}$ is witnessed in~$G^{s}$ by Lemma~\ref{lem:witness-N[s]}.
   If its witness is not~$w_{i}$, then~$(G - w_{i})^{s}$ contains the same configuration, violating A\ref{assumption:minimality}.

   We note that~$u_{2} x_{p}$ is collateral and~$w_{p}$ witnesses~$\{v, u_{2}, x_{p}\}$ in Figures~\ref{fig:dag+2e} and~\ref{fig:dag+e}.
   Otherwise, let~$w$ be a witness of the collateral edge~$v u_{2}$.
   By Proposition~\ref{lem:nonadjacent->dominate}, Proposition~\ref{lem:P3}, and the minimality,~$w$ cannot witness any other collateral edge.
   Then~$(G - w)^{s}$ contains either Configuration~\ref{fig:dag+e-simplified} or~\ref{fig:dag-unlabeled}, violating A\ref{assumption:minimality}.
   By symmetry,~$u_{1} x_{1}$ is collateral and~$w_{0}$ witnesses~$\{v, u_{1}, x_{1}\}$ in Figures~\ref{fig:dag+2e}.
By Assumption (minimality),
   \[
     V(G) = U\cup W \cup \{s\}.
   \]
   Note that~$x_{2} x_{3}$ is collateral in all the three graphs in Figure~\ref{fig:group-4} (note that~$p \ge 3$ in Figure~\ref{fig:long-dag-1}).
   Hence,~$N_{G^{s}}(x_{2}) = \{v, x_{1}, x_{3}, w_{1}, w_{2}\}$.
   However, there cannot be a witnesses of the collateral edge~$v x_{2}$ in~$H$, which must be in~$V(G)\setminus N_{G}[x_{2}]\subseteq N_{G^{s}}(x_{2})$, contradicting~\eqref{eq:9}.  This concludes the proof.
\end{proof} 

Lemmas 2.10--14 imply Theorem~\ref{thm:simplified-forbidden-configurations}.

\section{Proof of Theorem~\ref{thm:main}}

We rely on the reader to check the small graphs listed in Theorem~\ref{thm:main}, namely, long claw, whipping top$^\star$, and those in Figure~\ref{fig:chordal-non-cag}, are minimal forbidden induced subgraphs.
It is also easy to check graphs~$\overline{S_{k}^{+}}, k \ge 3$, thanks to the strong symmetries \cite{cao-24-split-cag}.\footnote{A quick argument is as follows.  In~$\overline{S_{k}^{+}}$, every maximal clique is the closed neighborhood of some simplicial vertex.  Thus, if~$\overline{S_{k}^{+}}$ is a circular-arc graph, then it has to be a Helly circular-arc graph, but it is already known that~$\overline{S_{k}}$ is not~\cite{joeris-11-hcag}.  The minimality follows from a similar observation.}
It is not that straightforward to show that all the~$\otimes$ graphs are minimal forbidden induced subgraphs of circular-arc graphs.  
We defer the proof of~Proposition~\ref{lem:o-graphs} to the appendix.

The rest of this section is to prove the sufficiency of Theorem~\ref{thm:main}, i.e., the completeness of this list, for which it suffices to focus on minimal connected chordal graphs that are not circular-arc graphs.
By Theorem~\ref{thm:simplified-forbidden-configurations},~$G^{s}$ contains a forbidden configuration in Figure~\ref{fig:simplified-forbidden-configurations}.
By translating forbidden configurations in~$G^{s}$ back to graphs in~$G$, we show that~$G$ must be one of the graphs listed in Theorem~\ref{thm:main}.
Throughout we use~$U$ to denote the vertex set of an annotated copy of the forbidden configuration under discussion.
We start with holes.

\begin{lemma}\label{lem:hole}
  If~$G^{s}$ is not chordal, then~$G$ is isomorphic to~$\overline{S_{\ell}^{+}}$ with~$\ell\ge 4$ or~$\otimes(a_{0}, a_{1}, \ldots, a_{2 p - 1})$ with~$\sum a_{i} \ge 4$.
\end{lemma}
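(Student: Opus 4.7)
The plan is to take a shortest hole~$H = v_{1} v_{2} \cdots v_{\ell} v_{1}$ of~$G^{s}$, so $\ell \ge 4$, and proceed by cases on whether every $v_{i}$ lies in~$N_{G}(s)$. By Lemma~\ref{lem:properties}(P\ref{assumption:N(s)}) together with the Minimality assumption, I may assume $V(G)\setminus(V(H)\cup\{s\})$ consists of one designated witness per collateral edge of~$H$. Observe also that $H$ cannot lie entirely in $V(G)\setminus N_{G}[s]$: on that set $G^{s}$ agrees with $G$, contradicting chordality of~$G$. So the two cases to treat are (A) all $v_{i}\in N_{G}(s)$, and (B) both inside and outside vertices appear on $H$.

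In Case A, $\{v_{1},\ldots,v_{\ell}\}$ is a clique of~$G$, every $v_{i} v_{i+1}$ is a collateral edge, and by Lemma~\ref{lem:witness-N[s]}(i) I can designate a witness $w_{i}$ that is simplicial in~$G$. Each chord $v_{i} v_{j}$ is a non-edge of $G^{s}$, forcing $N_{G}[v_{i}]\cup N_{G}[v_{j}]=V(G)$. Evaluating this condition on the vertices $w_{k}$ yields successively: the $w_{i}$ are pairwise distinct (else some $w_{i}$ would be non-adjacent to two non-consecutive $v$'s); each $w_{i}$ is non-adjacent in~$G$ to exactly $v_{i}$ and~$v_{i+1}$ and adjacent in~$G$ to all other $v_{k}$; and, since adjacent simplicial vertices share closed neighborhoods, the $w_{i}$ are pairwise non-adjacent. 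Combined with $s$ being adjacent to every $v_{i}$ and to no $w_{i}$, the induced subgraph on $\{s\}\cup\{v_{i}\}\cup\{w_{i}\}$ is $\overline{S_{\ell}^{+}}$ with $\ell\ge 4$.

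In Case B, the hole decomposes into $p\ge 1$ alternating maximal inside runs, of sizes $k_{0},\ldots,k_{p-1}$, and $p$ maximal outside runs, of sizes $a_{1},a_{3},\ldots,a_{2p-1}$; the lengths sum to $\ell\ge 4$. An outside run is an induced path of~$G$, playing the role of a path $P_{a_{2i+1}}$. An inside run $v_{j},\ldots,v_{j+k-1}$ is a clique in~$G$ whose internal edges on $H$ are collateral; its two boundary edges on $H$ are mixed edges that I would show to be collateral using Proposition~\ref{lem:nonadjacent->dominate} together with chordality. The designated witnesses of the $k-1$ internal collateral edges and of the two transition edges then form a $(k+1)$-element set realized as the independent set $\{w_{0},\ldots,w_{k}\}$ of a gadget $D_{k}$, with the non-adjacency pattern $N_{G}(w_{j}) = \{v_{j'}:1\le j'\le k,\ j'\notin\{j,j+1\}\}$ forced by the same non-domination identities used in Case A. Gluing the $p$ gadgets and $p$ outside paths circularly, with $s$ in the role of the special vertex $c$, recovers $\otimes(k_{0},a_{1},k_{1},a_{3},\ldots,k_{p-1},a_{2p-1})$ with $\sum a_{i}=\ell\ge 4$.

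The main obstacle is Case B, where one must rule out superfluous edges and identifications across the pieces: clique vertices of distinct gadgets must be adjacent in~$G$ (immediate, since $N_{G}(s)$ is a clique); end-witnesses of distinct gadgets must be pairwise non-adjacent (using Proposition~\ref{lem:P3} applied to neighboring collateral edges and chordality); each outside-path endpoint must attach in~$G$ to exactly the intended end-witness and to no other gadget's vertices (Propositions~\ref{lem:unwitnessed} and~\ref{lem:nonadjacent->dominate}); and no chord of the hole may be introduced through~$G$. Careful bookkeeping is needed in degenerate situations: when some $k_{i}=1$ there is no internal collateral edge, so both end-witnesses of that gadget must come from the two transition edges; when some $a_{2i+1}=1$ two gadgets are joined via a single outside vertex, and one must exclude spurious coincidences of witnesses across the join, again via Propositions~\ref{lem:P3} and~\ref{lem:unwitnessed} and the Minimality assumption. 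I expect these degenerate cases to constitute the bulk of the verification.
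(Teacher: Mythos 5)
Your proposal is correct and follows essentially the same route as the paper: split on whether the hole lies entirely in $N_{G}(s)$, use Minimality to reduce $V(G)$ to the hole plus one witness per collateral edge, pin down each witness's neighborhood via Propositions~\ref{lem:nonadjacent->dominate} and~\ref{lem:P3}, and assemble either $\overline{S_{\ell}^{+}}$ or the $\otimes$ graph from the alternating inside/outside runs. The only notable difference is cosmetic: in the all-inside case you derive the $\overline{S_{\ell}^{+}}$ structure directly from the non-domination identities and simplicial witnesses, whereas the paper observes that $G$ is a split graph and cites the companion split-graph characterization.
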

\begin{proof}
  Let~$U$ be the vertex set of a hole of~$G^{s}$.
  We number vertices in~$U$ such that the hole is~$v_{0} \cdots v_{|U| - 1}$.
  In this proof, indices of vertices in~$U$ are modulo~$|U|$.
  Note that for each vertex~$v_{i}\in U\cap N_{G}(s)$, both edges~$v_{i - 1} v_{i}$ and~$v_{i}v_{i + 1}$ are collateral by Proposition~\ref{lem:nonadjacent->dominate}.
  For~$i = 0, \ldots, |U| - 1$, if~$v_{i} v_{i+1}$ is a collateral edge, let~$w_{i}$ be its witness.  Let~$W$ denote all the witnesses.
  By Assumption (minimality),
  \begin{equation}
    \label{eq:17}
    V(G) = U\cup W\cup \{s\}.
  \end{equation}
  For each witness~$w_{i}$,
  \begin{equation}
    \label{eq:3}
    N_{G^{s}}(w_{i}) = \{v_{i}, v_{i+1}\}.
  \end{equation}
  By Proposition~\ref{lem:nonadjacent->dominate},~$N_{G^{s}}(w_{i}) \cap U\subseteq \{v_{i-1}, v_{i}, v_{i+1}\}$ when~$v_{i}\in N_G(s)$.  They cannot be equal by A\ref{assumption:replacable}.  It is similar if~$v_{i+1}\in N_G(s)$.  Moreover,
  \begin{equation}
    \label{eq:11}
    \text{$W$ is an independent set}.
  \end{equation}

  Suppose for contradiction that there are distinct~$i, j\in\{0, \ldots, |U| - 1\}$ such that~$w_{i} w_{j}\in E(G)$.
  If both~$v_{i}$ and~$v_{i+1}$ are from~$N_G(s)$, then~$\{v_{i}, v_{i+1}\} \subseteq N_{G^{s}}(w_{j})$ by Proposition~\ref{lem:nonadjacent->dominate}.  Since~$\{v_{i}, v_{i+1}\} \ne \{v_{j}, v_{j+1}\}$, we have a contradiction to~\eqref{eq:3}.
  It is similar when both~$v_{j}, v_{j+1}\in N_{G}(s)$.
  Thus, only one of~$\{v_{i}, v_{i+1}\}$ and only one of~$\{v_{j}, v_{j+1}\}$ are in~$N_G(s)$.
  We may assume without loss of generality that~$v_{j}$ is from~$N_G(s)$ and~$v_{j+1}$ is not.
  Note that~$v_{j} \in N_{G^{s}}(w_{i})$ by Proposition~\ref{lem:nonadjacent->dominate} and~$v_{i} = v_{j - 1}$ by~\eqref{eq:3}.
  This contradicts Proposition~\ref{lem:P3}(ii), applied to the path~$v_{j - 1} v_{j} v_{j + 1}$.

  If~$U\subseteq N_G(s)$, then~$G$ is a split graph, with the clique~$U\cup\{s\}$, by~\eqref{eq:17} and~\eqref{eq:11}.  Thus,~$G$ is isomorphic to~$\overline{S_{|U|}^{+}}$~\cite{cao-24-split-cag}.
  In the rest,~$U$ contains vertices from both~$N_G(s)$ (by A\ref{assumption:N(s)}) and~$V(G)\setminus N_G[s]$.
  Note that the number of edges on the hole between~$N_G(s)$ and~$V(G)\setminus N_G[s]$ is even.
  Let it be~$2 p$ for some positive integer~$p$.
  Removing all these~$2 p$ edges from the hole leaves a sequence of (possibly trivial) paths.
  For~$i = 0, 1, \ldots, 2 p-1$, let~$V_{i}$ denote the vertex set of the~$i$th path.
  Note that~$V_{i}$ is a subset of either~$N_G(s)$ or~$V(G)\setminus N_G[s]$, and if~$V_{i}\subseteq N_G(s)$, then~$V_{i+1\pmod{2 p}}\subseteq V(G)\setminus N_G[s]$, and vice versa.
  We may assume without loss of generality that~$V_{0}\subseteq N_G(s)$.  Hence,
  \[
    N_{G}(s) = V_{0}\cup V_{2}\cup \cdots\cup V_{2p - 2}.
  \]  
  For~$i = 0, \ldots, {p} - 1$, all the edges on the hole incident to~$V_{2 i}$ are collateral (Proposition~\ref{lem:nonadjacent->dominate}); let~$W_{2i}$ denote the witnesses of these~$|V_{2i}|+1$ collateral edges. 
  Note that~$W = W_{0}\cup W_{2}\cup \cdots\cup W_{2 p - 2}$.
  Two of the witnesses are adjacent to~$V_{2i-1\pmod{2 p}}$ and~$V_{2i+1}$, and let them be~$w_{2i}^1$ and~$w_{2i}^2$, respectively.
  The subgraph~$G[V_{2i}\cup W_{2i}]$ is the gadget~$D_{|V_{2i}|}$, the ends of which are~$w_{2i}^1$ and~$w_{2i}^2$.
  On the other hand,~$G[V_{2i+1}]$ is the same simple path as~$G^{s}[V_{2i+1}]$.
  The vertices in~$V_{2 i}$ are complete to vertices not in this gadget, while the two ends of~$G[V_{2i+1}]$ are connected to~$w_{2i}^2$ and~$w_{2i+2\pmod{2 p}}^1$, respectively.
  Thus,~$G$ is isomorphic to~$\otimes(|V_{0}|, |V_{1}|, \ldots, |V_{2p - 1}|)$, with~$c = s$.
\end{proof}

As we have characterized all minimal split graphs that are not circular-arc graph (Theorem~\ref{thm:split}), we can stop after~$G$ is determined to be a split graph.

\begin{figure}[ht]
  \centering \small
  \begin{subfigure}[b]{0.2\linewidth}
    \centering
    \begin{tikzpicture}[scale=.5]
      \foreach \i in {1, 2, 3} {
        \draw ({120*\i+90}:2) -- ({120*\i-30}:2);
        \draw ({120*\i-90}:1) -- ({120*\i+30}:1);
      }
      \foreach \i in {1, 2, 3} {
        \pgfmathparse{int(Mod(\i,3))}
        \node[rotate={int(\i/3)*180}, empty vertex, "$u_{\pgfmathresult}$" below] (u\i) at ({90-120*\i}:2) {};
      }
      \foreach \i in {1 , 2 , 3} {
        \pgfmathparse{int(Mod(\i,3))}
        \node[rotate={\i*240}, empty vertex, "$v_{\pgfmathresult}$" below] (v\i) at ({270-120*\i}:1) {};
      }
    \end{tikzpicture}
    \caption{}
    \label{fig:sun}
  \end{subfigure}
  \begin{subfigure}[b]{0.2\linewidth}
    \centering
    \begin{tikzpicture}[scale=.8, every node/.style={empty vertex}]
      \node["$u_{0}$" below] (c) at (0, 0.2) {};
      \foreach \i in {0, 1, 2} 
      \draw ({120*\i-30}:1.75) -- ({120*\i-30}:1) -- ({120*\i+90}:1);
      \foreach[count=\i from 0] \p/\l in {right/s, left/u_{2}, right/u_{1}} {
        \node["$\l$" \p] (u\i) at ({120*\i+90}:1.75) {};
        \node["$v_{\i}$" \p] (v\i) at ({120*\i+90}:1) {};
        \draw (c) -- (v\i);
      }
    \end{tikzpicture}
    \caption{}
    \label{fig:sun-0}
  \end{subfigure}
  \begin{subfigure}[b]{0.2\linewidth}
    \centering
    \begin{tikzpicture}[scale=.5]
      \begin{scope}[yshift=1cm]
        \foreach[count=\i from 0] \t/\v in {a-/v_{0}, b-/w_{1 0}, b-/w_{2 0}} {
          \draw ({120*\i-90}:2) -- ({120*\i+30}:2);
          \coordinate (x\i) at ({120*\i-90}:2) {};
        }
      \end{scope}
      \foreach[count=\i from 0] \t in {a-, b-, b-} {
        \draw ({120*\i+90}:2) -- ({120*\i-30}:2);
        \coordinate (u\i) at ({120*\i+90}:2) {};          
      }
      \foreach \t/\v/\l in {b-/x2/w_{1 0}, a-/u0/u_{0}, b-/x1/w_{2 0}} 
      \node[\t vertex, "$\l$"] at (\v) {};
      \foreach \t/\v/\l in {b-/u1/u_{2}, a-/x0/v_{0}, b-/u2/u_{1}} 
      \node[\t vertex, "$\l$" below] at (\v) {};
      
      \foreach[count=\i] \p in {left, right} 
      \node[b-vertex, "$v_{\i}$" \p] (v\i) at ({270-120*\i}:1) {};
      \draw (v1) -- (v2);
\end{tikzpicture}
    \caption{}
    \label{fig:sun-1}
  \end{subfigure}
  \begin{subfigure}[b]{0.22\linewidth}
    \centering
    \tikzstyle{every node}=[empty vertex]
    \begin{tikzpicture}[scale=.8]
      \begin{scope}[shift={(-1, -1)}]
\draw[step=2] (0, 0) grid (2, 2);
      \end{scope}
      \node[empty vertex, "$s$" right] (u0) at (0.2, 0) {};
      \draw (0, -1) -- (0, 1) -- (u0) --  (0, -1);
      \draw (-1, 0) -- (0, 1) -- (1, 0) -- (0, -1)--cycle;      
      \foreach \y/\p/\l in {-1/below/{u_{2}, u_{0}, u_{1}}, 1/above/{w_{1 0}, v_{0}, w_{2 0}}}
      \foreach[count=\x from -1] \v in \l
\node[empty vertex, "$\v$" \p] at (\x, \y) {};
      \node[empty vertex, "$v_{1}$" left] (v1) at (-1, 0) {};
      \node[empty vertex, "$v_{2}$" right] (v2) at (1, 0) {};
      \draw[bend right] (v1) edge (v2);
    \end{tikzpicture}
    \caption{}
    \label{fig:sun-2}
  \end{subfigure}
\caption{Illustrations for the proof of Lemma~\ref{lem:sun}.}
  \label{fig:lem:sun}
\end{figure}

\begin{lemma}\label{lem:sun}
  If~$G^{s}$ contains Configuration~\ref{fig:sun-simplified}, then~$G$ is isomorphic to~$\overline{S_{3}^{+}}$ or Figure~\ref{fig:the-weird}.
\end{lemma}
\begin{proof}
  We label the sun as in Figure~\ref{fig:sun}.  We claim that
  \[
    \{v_{0}, v_{1}, v_{2}\}\cap N_{G}(s)\ne \emptyset.
  \]
  Suppose for contradiction that~$\{v_{0}, v_{1}, v_{2}\}$ is disjoint from~$N_{G}(s)$.
  By A\ref{assumption:N(s)},~$\{u_{0}, u_{1}, u_{2}\}\cap N_{G}(s)\ne \emptyset$.
  Assume without loss of generality that~$u_{0}\in N_{G}(s)$.
  We note that~$\{v_{1}, v_{2}, u_{0}\}$ is not witnessed; if~$w$ is a witness of~$\{v_{1}, v_{2}, u_{0}\}$, then~$N_{G^{s}}(w) \cap U = \{v_{1}, v_{2}, u_{0}\}$ by Proposition~\ref{lem:nonadjacent->dominate}, violating~A\ref{assumption:replacable}.
  For~$i = 1, 2$, let~$w_{i}$ be a witness of the edge~$u_{0} v_{i}$; note that~$N_{G^{s}}(w_{i})\cap U = \{u_{0}, v_{i}\}$ by Proposition~\ref{lem:nonadjacent->dominate} and the fact that it is not a witness of~$\{v_{1}, v_{2}, u_{0}\}$.
  By Proposition~\ref{lem:P3}(ii),~$w_{1}$ and~$w_{2}$ are not adjacent.
  But then~$(G - \{u_{1}, u_{2}\})^{s}$ contains Configuration~\ref{fig:sun-unlabeled}, with~$u_{1}, u_{2}$ replaced by~$w_{1}, w_{2}$ in~$U$, violating A\ref{assumption:minimality}.

  For distinct~$i, j\in \{0, 1, 2\}$, let~$w_{ij}$ be a witness of the edge~$v_{i} u_{j}$ if it is collateral, and let~$W$ denote the set of witnesses.
  Note that~$U$ and~$W$ are disjoint by Proposition~\ref{lem:nonadjacent->dominate}.
  We claim that
  \begin{equation}
    \label{eq:15}
    V(G) = U\cup W\cup\{s\}.
  \end{equation}
  Let~$H= (G[U\cup W\cup\{s\}])^{s}$.
  By construction,~$H[U]$ is a subgraph of~$G^{s}[U]$, in which all the edges incident to~$u_{i}, i\in \{0, 1, 2\}$, are present.
  By A\ref{assumption:minimality},~$H[U]$ is chordal, which means that~$\{v_{0}, v_{1}, v_{2}\}$ is a clique.
  Thus, \eqref{eq:15} follows from Assumption (minimality).
  As a result,
  \begin{equation}
    \label{eq:32}
    \text{$\{v_{0}, v_{1}, v_{2}\}$ is not witnessed.}    
  \end{equation}
  Note that a witness~$\{v_{0}, v_{1}, v_{2}\}$, if one exists, must be in~$W$.  Suppose it is~$w_{i j}$ for some~$i, j\in \{0, 1, 2\}$.
  Then~$v_{i}\in N_{G}(s)$ and~$u_{j}\not\in N_{G}(s)$.
  It violates either Proposition~\ref{lem:nonadjacent->dominate}, when~$v_{j}\in N_{G}(s)$, or Proposition~\ref{lem:P3} otherwise.

  If~$\{v_{0}, v_{1}, v_{2}\}\subseteq N_{G}(s)$, then by~\eqref{eq:32} and Lemma~\ref{lem:witness-N[s]},~$G$ is isomorphic to~$\overline{S_{3}^{+}}$.
  Henceforth, assume without loss of generality that~$v_{0}\in N_{G}(s)$ and~$v_{2}\not\in N_{G}(s)$.
  By Corollary~\ref{lem:P4}, the set~$\{v_{0}, v_{1}, v_{2}\}$ is always a clique in~$G$.
  Thus, an edge among~$\{v_{0}, v_{1}, v_{2}\}$ is collateral if and only if at least one of its ends is in~$N_{G}[s]$.
  We show that 
  \begin{equation}
    \label{eq:33}
    v_{1}\not\in N_{G}(s).    
  \end{equation}
Suppose for contradiction that~$v_{1}\in N_{G}(s)$.
  Let~$w$ be a witness of~$v_{0} v_{1}$.
  By~\eqref{eq:32},~$w$ is not adjacent to~$v_{2}$.
  By Proposition~\ref{lem:nonadjacent->dominate},~$w$ is not adjacent to~$u_{0}$ or~$u_{1}$ in~$G^{s}$.
  If~$w \ne u_{2}$, then~$(G - u_{2})^{s}$ contains Configuration~\ref{fig:sun-unlabeled} (with~$u_{2}$ replaced by~$w$ in~$U$).
  Thus,~$w = u_{2}$, and neither~$v_{0} u_{2}$ nor~$v_{1} u_{2}$ is collateral.
  By Assumption,~$G$ is connected, and hence~$\{u_{0}, u_{1}\}\cap N_{G}(s)\ne\emptyset$ or~$u_{2}$ is adjacent to a vertex in~$W$.
  \begin{itemize}
  \item 
  If a vertex in~$W$ is adjacent to~$u_{2}$, it must be~$w_{0 1}$ or~$w_{1 0}$ by Proposition~\ref{lem:nonadjacent->dominate}.
Assume without loss of generality that~$w_{1 0}$ exists and is adjacent to~$u_{2}$; then~$u_{0}\not\in N_{G}(s)$ by Proposition~\ref{lem:nonadjacent->dominate}.
  Then~$v_{0} w_{1 0}$ is a collateral edge witnessed by~$u_{2}$.
  By Proposition~\ref{lem:nonadjacent->dominate},~$N_{G^{s}}(w_{1 0})\cap U \subseteq U\setminus \{u_{1}\}$, and they cannot be equal by A\ref{assumption:replacable}.
  Thus,~$w_{1 0}$ and~$v_{2}$ are not adjacent, but then~$v_{0} v_{2} u_{0} w_{1 0}$ is a hole in~$(G - v_{1})^{s}$, violating A\ref{assumption:minimality}.
\item Assume without loss of generality that~$u_{0}\in N_{G}(s)$.
  Let~$w'$ be a witness of the collateral edge~$v_{0} v_{2}$.
  By Proposition~\ref{lem:nonadjacent->dominate},~$w'$ is not adjacent to~$u_{0}$ in~$G^{s}$.  By~\eqref{eq:15},~$w'$ is~$w_{0 1}$ or~$w_{2 1}$.  In either case, $N_{G^{s}}(w')\cap U = \{u_{1}, v_{0}, v_{2}\}$ by Proposition~\ref{lem:nonadjacent->dominate}.  Then~$(G - u_{1})^{s}$ contains Configuration~\ref{fig:sun-unlabeled} (with~$u_{1}$ replaced by~$w'$ in~$U$), violating~A\ref{assumption:minimality}.
  \end{itemize}
  This verifies~\eqref{eq:33}.
    
  Finally, we argue that
  \[
    \text{neither~$v_{0}u_{1}$ nor~$v_{0}u_{2}$ is collateral.}
  \]
  We show~$v_{0}u_{1}$ and the other follows by symmetry.
  Suppose that~$v_{0}u_{1}$ is collateral, witnessed by~$w_{0 1}$.
  By Proposition~\ref{lem:P3}, $w_{0 1}$ is not adjacent to~$v_{1}$.
  By Proposition~\ref{lem:nonadjacent->dominate},~$N_{G^{s}}(w_{0 1})\cap U \subseteq \{v_{0}, v_{2}, u_{1}, u_{2}\}$.
  If~$u_{2}\in N_{G^{s}}(w_{0 1})$, then~$(G - u_{0})^{s}$ contains a hole ($v_{1} v_{2} w_{0 1} u_{2}$ or~$v_{1} v_{2} u_{1} w_{0 1} u_{2}$).
  If~$w_{0 1}$ is adjacent to~$v_{2}$, then~$(G - u_{1})^{s}$ contains Configuration~\ref{fig:sun-unlabeled} (with~$u_{1}$ replaced by~$w_{0 1}$ in~$U$).
  All violate A\ref{assumption:minimality}.
  Otherwise,~$N_{G^{s}}(w_{0 1})\cap U = \{v_{0}, u_{1}\}$, and~$(G - u_{0})^{s}$ contains Configuration~\ref{fig:p5x1-unlabeled unrestricted} ($U\cup\{w_{0 1}\}\setminus \{u_{0}\}$), violating Lemma~\ref{thm:forbidden-configurations-unrestricted}.

  As a result,~$u_{1}, u_{2}\in V(G)\setminus N_{G}[s]$.
  If~$u_{0}\not\in N_{G}(s)$, then~$V(G) = U\cup \{s\}$, and~$G$ is isomorphic to~$\overline{S_{3}^{+}}$; see Figure~\ref{fig:sun-0}.
Hence,~$u_{0}\in N_{G}(s)$, and~$W = \{w_{1 0}, w_{2 0}\}$.
  See Figure~\ref{fig:sun-1}.
  Note that~$w_{1 0}\ne w_{2 0}$; otherwise,~$(G - u_{0})^{s}$ contains Configuration~\ref{fig:sun-unlabeled} (with~$u_{0}$ replaced by~$w_{1 0}$).
  By Proposition~\ref{lem:P3},~$w_{1 0}$ and~$w_{2 0}$ are not adjacent.  By Proposition~\ref{lem:nonadjacent->dominate},~$N_{G^{s}}(w_{i 0}) = \{u_{0}, v_{i}\}, i = 1, 2$.
Hence,~$G$ is isomorphic to the graph in Figure~\ref{fig:the-weird}; see Figure~\ref{fig:sun-2}.
\end{proof}

\begin{figure}[ht]
  \centering \small
  \quad
  \begin{subfigure}[b]{0.22\linewidth}
    \centering
    \begin{tikzpicture}[label distance=-2pt, scale=.7]
      \node[empty vertex, "$v_{3}$" below] (v7) at (0, -1) {};
      \draw (-2, 0) -- (2, 0);
      \foreach[count=\i from 2] \v/\p in {x_{1}/above, v_{1}/above, v_{0}/above right, v_{2}/above, x_{2}/above} {
        \node[empty vertex, "$\v$" \p] (v\i) at ({\i - 4}, 0) {};
        \draw (v\i) -- (v7);
      }
      \node[b-vertex, "$x_{0}$"] (v1) at (0, .75) {};
      \foreach \i in {2, 6} \node[b-vertex] at (v\i) {};
      \draw (v4) -- (v1);      
    \end{tikzpicture}
    \caption{}
    \label{fig:whipping-top-labeled}
  \end{subfigure}
  \begin{subfigure}[b]{0.22\linewidth}
    \centering
    \begin{tikzpicture}[scale=.75]
      \draw (-2.,0) -- (2.,0);
      \node[a-vertex, "$v_{0}$"] (a) at (0, 1.) {};
      \node[b-vertex, "$u$" below] (u) at (0, 0) {};
      \draw[witnessed edge] (a) -- (u);
      \foreach[count=\j] \x in {-2, 2} {
        \draw (a) -- (\x, 0) node[b-vertex, "$x_{\j}$" below] (v\j) {};
      }
      \foreach[count=\j] \x in {-1, 1} {
        \draw (a) -- (\x, 0) node[empty vertex, "$v_{\j}$" below] (v\j) {};
      }
     \end{tikzpicture}
    \caption{}
    \label{fig:p5x1-labeled}
  \end{subfigure}
  \caption{Configurations~\ref{fig:whipping-top-simplified} and~\ref{fig:p5x1-simplified}, reproduced and labeled.}
  \label{fig:long claw & whipping top}
\end{figure}

\begin{lemma}\label{lem:whipping-top}
  If~$G^{s}$ contains Configuration~\ref{fig:whipping-top-simplified}, then~$G$ is a split graph.
\end{lemma}
\begin{proof}
  We use the labels in Figure~\ref{fig:whipping-top-labeled}.
  Note that~$v_{3}$ cannot witness any collateral edge by Proposition~\ref{lem:nonadjacent->dominate}.
  Since~$(G - v_{3})^{s}$ cannot contain Configuration~\ref{fig:p5+1-simplified} ($U\setminus \{v_{3}\}$),
  {the edge~$v_{0} x_{0}$ is not collateral.}
For~$i = 1, 2$,
  \begin{equation}
    \label{eq:20}
    \text{$v_{i} x_{i}$ is not collateral, and if~$v_{i} v_{0}$ is collateral, its witness is not adjacent to~$x_{i}$.}
  \end{equation}
  We consider~$i = 1$, and it is symmetric for~$i = 2$.
Note that~$v_{1}\in N_{G}(s)$ if either assertion of~\eqref{eq:20} is false (Proposition~\ref{lem:nonadjacent->dominate}).
  For the first part, suppose that~$v_{1} x_{1}$ is collateral, and let~$w$ be its witness.
  By Proposition~\ref{lem:nonadjacent->dominate},~$N_{G^{s}}(w)\cap U \subseteq \{v_{0}, v_{1}, v_{3}, x_{1}\}$, and they cannot be equal by A\ref{assumption:replacable}.
  If~$N_{G^{s}}(w)\cap U = \{v_{0}, v_{1}, x_{1}\}$, then~$(G - x_{0})^{s}$ contains a hole~$w v_{0} v_{3} x_{1}$.
  Thus,~$N_{G^{s}}(w)\cap U \subseteq \{v_{1}, v_{3}, x_{1}\}$.
  Then~$(G - x_{1})^{s}$ contains either an annotated copy of the same configuration (with~$x_{1}$ replaced by~$w$ in~$U$, when~$v_{3}\in N_{G^{s}}(w)$), or an induced net ($U\cup \{w\}\setminus \{x_{1}, v_{2}\}$).
  Thus,~$v_{1} x_{1}$ is not collateral.
  Now suppose that~$v_{0} v_{1}$ is collateral and its witness~$w$ is adjacent to~$x_{i}$.
  By Proposition~\ref{lem:nonadjacent->dominate} and A\ref{assumption:replacable},~$N_{G^{s}}(w)\cap U = \{v_{0}, v_{1}, x_{1}\}$,
  and then~$(G - x_{0})^{s}$ contains a hole~$w v_{0} v_{3} x_{1}$.  This verifies~\eqref{eq:20}.

  Since~$(G - x_{0})^{s}$ cannot contain Configuration~\ref{fig:p5x1-unlabeled} ($U\setminus \{x_{0}\}$), 
  \begin{equation}
    \label{eq:10}
      \text{if~$v_{3}\in N_G(s)$, then~$v_{0}\in N_G(s)$.}
\end{equation}
  We further argue that
  \begin{equation}
    \label{eq:27}
   \text{if~$v_{0}\in N_G(s)$, then~$v_{1}, v_{2}\in N_G(s)$.}    
 \end{equation}
 We consider~$i = 1$, and the others are symmetric.
  Suppose that~$v_{0}$ is in~$N_G(s)$ and~$v_{1}$ is not.
  By Lemma~\ref{thm:forbidden-configurations-unrestricted},~$(G - x_{1})^{s}$ cannot contain Configuration~\ref{fig:whipping-top-1-unlabeled unrestricted} 
  ($U\setminus\{x_{1}\}$), and hence the edge~$v_{1} v_{3}$ must be collateral and the vertex~$x_{1}$ must be its only witness.
  Then~$v_{3}\in N_{G}(s)$.
  Let~$w$ be a witness of~$v_{0} v_{3}$.
  Since~$w$ is not a witness of~$v_{1} v_{3}$, it is not adjacent to~$v_{1}$.
  Note that if~$x_{2}$ is a witness of~$v_{2} v_{3}$, then~$v_{2}$ cannot be a witness of~$x_{2} v_{3}$.
  The graph~$(G - v_{2})^{s}$ contains Configuration~\ref{fig:ab-wheel-simplified} ($U\cup\{w\}\setminus\{x_{1}, v_{2}\}$), or the graph~$(G - x_{2})^{s}$ contains Configuration~\ref{fig:dag+2e-simplified} ($U \setminus \{x_{1}, x_{2}\}$).
  We end up with a contradiction to A\ref{assumption:minimality}.

  For~$i = 1, 2$, we take~$w_{i}$ to be a witness of the edge~$v_{i} v_{0}$ if it is collateral, or set~$w_{i} = v_{i}$ otherwise.
We argue that
  \begin{equation}
    \label{eq:28}
\text{$w_{i}$ is a witness of~$\{v_{0}, v_{i}, v_{3}\}$.}
  \end{equation}
  We consider~$i = 1$, and it symmetric for~$i = 2$.
  If~$v_{3}\in N_{G}(s)$, then
~$\{v_{0}, v_{1}, v_{3}\}\subseteq N_{G}(s)$ by~\eqref{eq:10} and~\eqref{eq:27}, and it is witnessed by Lemma~\ref{lem:witness-N[s]} (note that~$G$ cannot be isomorphic to~$\overline{S_{3}^{+}}$ because~$|V(G)| > 7$).  If~$w_{1}$ is not a witness of~$\{v_{0}, v_{1}, v_{3}\}$, then~$(G - w_{1})^{s}[U]$ is the same configuration by~\eqref{eq:20}.
Hence, assume that~$v_{3}\not\in N_{G}(s)$, and it suffices to show that~$w_{1}$ is adjacent to~$v_{3}$. 
We have nothing to argue when~$v_{0}, v_{1}\not\in N_{G}(s)$.  Otherwise, $v_{1}\in N_{G}(s)$ by~\eqref{eq:27}, and~$N_{G^{s}}(w_{1}) \cap U = \{v_{0}, v_{1}\}$ by~\eqref{eq:20}.
Then~$(G - x_{0})^{s}$ contains Configuration~\ref{fig:sun-unlabeled} ($\{w_{1}, v_{0}, v_{1}, v_{2}, v_{3}, x_{1}\}$), violating A\ref{assumption:minimality}.

  Finally, we argue that
  \begin{equation}
    \label{eq:13}
    V(G) = U\cup \{s, w_{1}, w_{2}\}.    
  \end{equation}
  We consider the graph~$H = G[U\cup \{s, w_{1}, w_{2}\}])^{s}$. 
If~$U\cup \{s, w_{1}, w_{2}\} \subsetneq V(G)$, then~$H$ cannot contain any forbidden configuration by A\ref{assumption:minimality}.
  Except for~$v_{3} x_{1}$ and~$v_{3} x_{2}$, all the other edges in~$G^{s}[U]$ are present in~$H$.
  Depending on how many of~$v_{3} x_{1}$ and~$v_{3} x_{2}$ are present in~$H$, it contains Configuration~\ref{fig:dag-unlabeled}, with 6 or 7 vertices, or Configuration~\ref{fig:whipping-top-unlabeled}.
  This violates A\ref{assumption:minimality}, and hence~\eqref{eq:13} holds.

  By~\eqref{eq:10},~\eqref{eq:27}, and~$U\cap N_{G}(s) \ne \emptyset$, at least one of~$v_{1}$ and~$v_{2}$ is from~$N_{G}(s)$.
  If they are both in~$N_{G}(s)$, the witnesses~$w_{1}$ and~$w_{2}$ are not adjacent by Proposition~\ref{lem:P3}.
  Thus, the graph~$G$ is a split graph, with clique~$\{v_{0}, v_{1}, v_{2}, v_{3}\}$.
\end{proof}

\begin{lemma}\label{lem:p5x1}
  If~$G^{s}$ contains Configuration~\ref{fig:p5x1-simplified}, then~$G$ is isomorphic to long claw or Figure~\ref{fig:extended-net}, or is a split graph.
\end{lemma}
\begin{proof} We use the labels in Figure~\ref{fig:p5x1-labeled}.
  We argue that
  \begin{equation}
    \label{eq:21}
\text{neither~$v_{1} x_{1}$ nor~$v_{2} x_{2}$ is collateral.}
  \end{equation}
  We consider~$i = 1$, and it is symmetric for~$i = 2$.
  Suppose that~$v_{1} x_{1}$ is collateral, hence~$v_{1}\in N_{G}(s)$.
  Let~$w$ be a witness of~$\{v_{0}, v_{1}, x_{1}\}$, which exists because of Proposition~\ref{lem:unwitnessed}.
  By Proposition~\ref{lem:nonadjacent->dominate} and A\ref{assumption:replacable},~$N_{G^{s}}(w) \cap U = \{v_{0}, v_{1}, x_{1}\}$.  Then~$(G - x_{1})^{s}$ contains an annotated copy of the same forbidden configuration (with~$x_{1}$ replaced by~$w$ in~$U$), violating~A\ref{assumption:minimality}.
  Thus, \eqref{eq:21} holds. 

  \[
    \text{If~$v_{0}$ is the only vertex in~$N_{G}(s)$, then~$G$ is isomorphic to long claw or Figure~\ref{fig:extended-net}.}
\]
Note that~$x_{1} v_{1} u v_{2} x_{2}$ is a path in~$G$, and for~$i = 1, 2$, the vertex~$x_{i}$ witnesses~$v_{0} v_{i}$.
  Since~$G$ is chordal, vertices in~$N_{G}(v_{0})$ are consecutive on it.
  We first show that at most one of~$v_{1}$ and~$v_{2}$ can be in~$N_{G}(v_{0})$. 
  If~$N_{G}(v_{0})\cap U = \{v_{1}, u, v_{2}\}$, then by Assumption (minimality),~$V(G) = U\cup \{s, w\}$, where~$w$ is a witness of~$v_{0} u$, and the graph~$(G - w)^{s}$ contains a hole~$v_{0} v_{1} u v_{2}$.
If~$u$ is the only vertex in~$N_{G}(v_{0})\cap U$, then~$V(G) = U\cup\{s\}$ and~$G$ is a long claw ($v_{1}$ witnesses~$v_{0} u$).
Hence, precisely one of~$v_{1}$ and~$v_{2}$ is adjacent to~$v_{0}$ in~$G$.
  We may assume~$v_{1}\in N_{G}(v_{0})$ and~$v_{2}\not\in N_{G}(v_{0})$; the other case is symmetric.
  By Assumption (minimality),~$V(G) = U\cup\{s\}$ ($x_{1}$ and~$v_{2}$ witness~$v_{0} v_{1}$ and~$v_{0} u$, respectively).
  Hence,~$G$ is isomorphic to Figure~\ref{fig:extended-net}.

  In the rest,~$|N_{G}(s)\cap U| > 1$.  We show that~$G$ is a split graph.
We may assume without loss of generality that~$v_{1}\in N_{G}(s)$.
  Let~$w_{1}$ be a witness of~$v_{1} u$, which is collateral by Corollary~\ref{lem:P4}.
  Let~$w_{2}$ be a witness of~$v_{2} u$ if~$v_{2}\in N_{G}(s)$ or~$v_{2}$ otherwise.
  By~\eqref{eq:21},~$x_{1}$ witnesses~$v_{0} v_{1}$ and~$x_{2}$ witnesses~$v_{0} v_{2}$.
By Assumption (minimality),~$V(G) = U\cup \{s, w_{1}, w_{2}\}$.
By Proposition~\ref{lem:nonadjacent->dominate},~$N_{G^{s}}(w_{1})\subseteq \{u, v_{0}, v_{1}, x_{1}\}$, and they cannot be equal by A\ref{assumption:replacable}.
If~$v_{0}\not\in N_{G^{s}}(w_{1})\}$, then~$(G - x_{2})^{s}$ contains Configuration~\ref{fig:holes-unlabeled} ($v_{0} x_{1} w_{1} u$) when~$w_{1}$ is adjacent to~$x_{1}$, or Configuration~\ref{fig:sun-unlabeled} $(\{v_0,v_1,u,x_1,w_1,w_2\})$ otherwise.
  If $w_{1}$ is non-djacent to $v_0$ in $G^s$, then $w_2$ is a witness of $uv_{0}$, and $(G - x_{2})^{s}$ contains Configuration~\ref{fig:sun-unlabeled} $(\{v_0,v_1,u,x_1,w_1,w_2\})$.  Hence, we have~$N_{G^{s}}(w_{1}) = \{v_{0}, v_{1}, u\}$.
  Since~$G$ is connected,~$N_{G}(x_{1}) \ne \emptyset$.
    We must have~$v_{2}\in N_{G}(s)$.
    By symmetry,~$N_{G^{s}}(w_{2}) = \{v_{0}, v_{2}, u\}$.
    By Proposition~\ref{lem:P3},~$w_{1}$ and~$w_{2}$ are not adjacent.
    Thus,~$G$ is a split graph, with the clique~$\{u, v_{0}, v_{1}, v_{2}\}$.
\end{proof}

\begin{figure}[ht]
  \centering \small
  \begin{subfigure}[b]{0.22\linewidth}
    \centering
    \begin{tikzpicture}[scale=.75]
      \draw (1, 1) -- (4, 1);

       \node[empty vertex, "$x_{3}$" below] (v4) at (3, 0.2) {};
      \foreach[count=\i] \t/\v/\x in {b-/x_1/1, a-/x_2/2, a-/{\quad v}/3.5, b-/u/5} {
        \draw (v4) -- (\i, 1) node[\t vertex, "$\v$"] (u\i) {};
      }

      \draw (u3) -- ++(0, .85) node[b-vertex, "$x_{4}$"] (x2) {};
      \draw[witnessed edge] (u3) -- (u4);
     \end{tikzpicture}
    \caption{}
    \label{fig:whipping-top-1}
  \end{subfigure}
  \begin{subfigure}[b]{0.22\linewidth}
    \centering
    \begin{tikzpicture}[scale=1.]
      \draw (1, 1) -- (4, 1);
      \node[empty vertex, "$x_{3}$" below] (v4) at (3, 0.) {};
      \foreach[count=\i] \t/\v/\x in {b-/x_1/1, a-/x_2/2, a-/{\quad v}/3.5, b-/u/5} {
        \draw (v4) -- (\i, 1) node[\t vertex, "$\v$"] (u\i) {};
      }
      \foreach[count=\i] \x in {-1, 1} {
        \draw (v4) -- ++({.35*\x}, .6) node[b-vertex] (w\i) {} ++({.45*\x}, -.1) node {$w_\i$};
        \draw (u\inteval{\i+1}) -- (w\i) -- (u\inteval{\i+2});
      }
      \draw (w1) -- (w2);      
      \draw (u3) -- ++(0, .7) node[b-vertex, "$x_{4}$"] (x2) {};
      \draw[witnessed edge] (u3) -- (u4) (u2) -- (w2);
      \draw[bend left, densely dotted, thick] (x2) edge (w2);
     \end{tikzpicture}
    \caption{}
    \label{fig:whipping-top-1-1}
  \end{subfigure}
  \begin{subfigure}[b]{0.18\linewidth}
    \centering
    \begin{tikzpicture}[scale=.8]
      \draw (-1, -1) grid (1,1);
      \draw (-1, 1) -- (1,1);
      \draw[bend right] (-1, 0) edge (1, 0);
      \draw (-1, 0) -- (0, 1) -- (1, 0) -- (0, -1)--cycle;      
      \foreach \x in {-1, 0, 1}
      \foreach \y in {-1, 0, 1}
      \node[empty vertex] at (\x, \y) {};
      \foreach \x in {-1, 1}
      \foreach \y in {-1, 1}
      \node[empty vertex] at (\x, \y) {};
      \foreach \y/\l in {-1/{w_{1}, w_{2}, x_{4}}, 1/{x_{1}, v, s}}
      \foreach[count=\x from -1] \v in \l
      \node at (\x, {\y*1.4}) {$\v$};
      \foreach \x/\v in {-1/x_{3}, 1/x_{2}}
      \node at ({\x*1.4}, 0) {$\v$};
      \node at (.25, .25) {$u$};
    \end{tikzpicture}
    \caption{}
    \label{fig:(S4+)-e-labeled}
  \end{subfigure}
  \begin{subfigure}[b]{0.14\linewidth}
    \centering
    \begin{tikzpicture}[scale=1.]
      \draw (2, 1) -- (4, 1);

       \node[b-vertex, "$x_{3}$" below] (v4) at (3, 0.2) {};
      \foreach[count=\i from 2] \t/\v/\x in {a-/x_2/2, a-/{v\quad}/3.5, b-/u/5} {
        \draw (v4) -- (\i, 1) node[\t vertex, "$\v$"] (u\i) {};
      }
      \node[b-vertex, "$w_{2}$" above right] (w2) at (3.6, 1.6) {};
      \draw (u3) -- ++(0, .9) node[b-vertex, "$x_{4}$"] (x2) {};
      \foreach \v in {x2, u3, u4} \draw (w2) -- (\v);
      \draw[witnessed edge] (u3) -- (u4);
      \draw[densely dotted, thick] (w2) -- (v4);
     \end{tikzpicture}
    \caption{}
    \label{fig:whipping-top-1-2}
  \end{subfigure}
  \caption{Illustrations for Lemma~\ref{lem:whipping-top-1}.  The edge~$w_{2} x_{4}$ in~\ref{fig:whipping-top-1-1} and the edge~$w_{2} x_{3}$ in~\ref{fig:whipping-top-1-2} may or may not be present.}
\end{figure}

\begin{lemma}\label{lem:whipping-top-1}
  If~$G^{s}$ contains Configuration~\ref{fig:whipping-top-1-simplified}, then 
  either~$G$ is isomorphic to the graph in Figure~\ref{fig:(S4+)-e}, or~$G$ is a split graph.
\end{lemma}
\begin{proof}
  We use the labels in Figure~\ref{fig:whipping-top-1}.
  We take a witness~$w_{1}$ of~$x_{2} v$ and a witness~$w_{2}$ of~$v u$.
We claim that 
  \[
    \text{$N_{G^{s}}(w_{1})\cap U = \{v, x_{2}, x_{3}\}$ and~$w_{1}$ is a witness of~$\{v, x_{2}, x_{3}\}$.}
  \]
  By Proposition~\ref{lem:nonadjacent->dominate},~$N_{G^{s}}(w_{1})\cap U \subseteq \{v, x_{2}, x_{3}\}$.
  Since~$(G - x_{4})^{s}$ does not contain Configuration~\ref{fig:sun-unlabeled} ($U \cup \{w_{1}\}\setminus\{x_{4}\}$), the edge~$w_{1} x_{3}$ must be present in~$G^{s}$.
Then~$w_{1}$ is a witness of~$\{v, x_{2}, x_{3}\}$ when~$x_{3}\not\in N_G(s)$. Suppose then~$x_{3}\in N_G(s)$.
  By Lemma~\ref{lem:witness-N[s]}, the clique~$\{v, x_{2}, x_{3}\}$ has a witness.
  It has to be~$w_{1}$ by Assumption (minimality): otherwise~$(G - w_{1})^{s}[U]$ is the same configuration.
   We also note that
  \begin{equation}
    \label{eq:31}
    \text{neither~$x_{3} u$ nor~$x_{3} x_{1}$ is collateral.}    
  \end{equation}
  If~$x_{3} u$ is collateral,  then~$x_{3}\in N_{G}(s)$, and~$(G - x_{2})^{s}$ contains Configuration~\ref{fig:ab-wheel-simplified} ($U \cup \{w_{1}\}\setminus\{x_{2}\}$).
  If~$x_{1} x_{3}$ is collateral, witnessed by~$w$, then~$(G - w)^{s}[U]$ is Configuration~\ref{fig:dag+e-simplified} (where~$x_{1} x_{3}$ is absent by minimality).
  Thus,
  \[
    V(G) = U\cup \{s, w_{1}, w_{2}\}.
  \]
  Note that~$w_{1}$ and~$w_{2}$ are adjacent if and only if~$w_{2} x_{2}$ is a collateral edge, witnessed by~$w_{1}$.
  See Figure~\ref{fig:whipping-top-1-1}.
  Moreover, if~$w_{1}$ and~$w_{2}$ are adjacent, then~$w_{2}$ is adjacent to~$x_{3}$ as otherwise~$u w_{2} x_{2} x_{3}$ is a hole of~$(G - x_{4})^{s}$, and~$w_{2}$ is adjacent to~$x_{4}$ as otherwise~$(G - x_{3})^{s}$ contains Configuration~\ref{fig:dag+e-simplified} ($U \cup \{w_{2}\}\setminus\{x_{3}\}$).
  As a result,
  \[
    x_{3}\in V(G)\setminus N_G[s].
  \]
  Otherwise,~$w_{1}$ must be adjacent to~$w_{2}$ because~$G$ is connected.  By Proposition~\ref{lem:unwitnessed} and~\eqref{eq:31}, the clique~$\{u, v, x_{3}\}$ has a witness, which has to be~$w_{2}$.  But then~$w_{2}$ cannot be adjacent to $x_{4}$ by Proposition~\ref{lem:nonadjacent->dominate}.

  By Proposition~\ref{lem:nonadjacent->dominate},~$N_{G^{s}}(w_{2})\cap U \subseteq \{v, u, x_{2}, x_{3}, x_{4}\}$.
  When they are equal (Figure~\ref{fig:whipping-top-1-1}), the graph~$G$, as labeled in Figure~\ref{fig:(S4+)-e-labeled}, is isomorphic to the graph in Figure~\ref{fig:(S4+)-e}.
  As said above, this must be the case when~$w_{1}$ and~$w_{2}$ are adjacent.
   Hence,~$w_{1} w_{2}\not \in E(G)$.
   If~$w_{2}$ is adjacent to~$x_{4}$, then the subgraph of~$(G - x_{1})^{s}$ induced by~$U\cup \{w_{2}\}\setminus \{x_{1}\}$ is Configuration~\ref{fig:ddag+e-unlabeled unrestricted} (when~$w_{2}$ is adjacent to~$x_{3}$) or Configuration~\ref{fig:p5x1-unlabeled unrestricted} (otherwise); see Figure~\ref{fig:whipping-top-1-2}.
   Thus, $N_{G^{s}}(w_{2}) \subseteq \{v, u, x_{3}\}$, and~$G$ is a split graph, with the clique~$\{u, v, x_{2}, x_{3}\}$.
   This concludes the proof.
\end{proof}
The last group are Configurations~\ref{fig:ddag+e-simplified}, \ref{fig:ddag+2e-simplified}, and~\ref{fig:ddag-simplified}.
Note that Configurations~\ref{fig:ddag+2e-simplified} and~\ref{fig:ddag-simplified} have at least seven vertices.
We deal with Configuration~\ref{fig:ddag+e-simplified} on six vertices separately, and leave the others together.

\begin{figure}[ht]
  \centering \small
  \begin{subfigure}[b]{0.18\linewidth}
    \centering
    \begin{tikzpicture}[scale=.5]
      \foreach \i in {1, 2, 3} {
        \draw ({120*\i+90}:2) -- ({120*\i-30}:2);
        \draw ({120*\i-90}:1) -- ({120*\i+30}:1);
      }
      \foreach \i/\t in {1/a-, 2/empty , 3/empty } {
        \pgfmathparse{int(Mod(\i,3))}
        \node[rotate={int(\i/3)*180}, b-vertex, "$u_{\pgfmathresult}$" below] (u\i) at ({90-120*\i}:2) {};
      }
      \foreach[count=\i] \p in {left, right} 
      \node[empty vertex, "$v_{\i}$" \p] (v\i) at ({270-120*\i}:1) {};
      \node[a-vertex, "$v_{0}$" below] (v0) at ({270}:1) {};
\draw[witnessed edge] (u3) -- (v0);
    \end{tikzpicture}
    \caption{}
    \label{fig:sun+1}
  \end{subfigure}
  \begin{subfigure}[b]{0.18\linewidth}
    \centering
    \begin{tikzpicture}[scale=.5]
      \foreach \i in {1, 2, 3} {
\draw ({120*\i-90}:1) -- ({120*\i+30}:1);
      }
      \foreach \i/\t in {1/a-, 2/empty , 3/empty } {
        \pgfmathparse{int(Mod(\i,3))}
\node[b-vertex, label={int(\i/3)*180 - 90}:$u_{\pgfmathresult}$] (u\i) at ({90-120*\i}:2) {};
      }
      \foreach[count=\i] \p in {left, right} 
      \node[a-vertex, "$v_{\i}$" \p] (v\i) at ({270-120*\i}:1) {};
      \node[a-vertex, "$v_{0}$" below] (v0) at ({270}:1) {};
      \draw[witnessed edge] (v1) -- (u3) -- (v0);
       \draw (v1) -- (u2) -- (v0) -- (u1) -- (v2);
\uncertain{v2}{u3};
      \node[b-vertex, "$w_{1}$" above left] (w) at (120:2) {};       
      \foreach \v in {v0, v1, v2, u3}
      \draw (w) -- (\v);
    \end{tikzpicture}
    \caption{}
    \label{fig:sun+1-3}
  \end{subfigure}
  \begin{subfigure}[b]{0.2\linewidth}
    \centering
    \begin{tikzpicture}[scale=.8, label distance = -2pt, every node/.style={empty vertex}]
      \node["$v_{0}$" below] (c) at (0, 0.) {};
\foreach \i in {1, 2} 
      \draw ({120*\i-30}:1) -- ({120*\i+90}:1);
      \foreach[count=\i from 0] \p/\l in {right/w_{1}, /u_{1}, /u_{2}} {
        \node["$\l$" \p] (u\i) at ({120*\i+90}:1.75) {};
        \draw (c) -- (u\i);
      }
      \node["$u_{0}$" right] (u0) at (90:1) {};
      \foreach[count=\i] \p in {left, right}
        \node["$v_{\i}$" above \p] (v\i) at ({120*\i+90}:1) {};
      \draw[densely dotted, thick] (u0) -- (v2);      
    \end{tikzpicture}
    \caption{}
    \label{fig:sun+1-4}
  \end{subfigure}
  \begin{subfigure}[b]{0.18\linewidth}
    \centering
    \begin{tikzpicture}[scale=.5]
      \foreach \i in {1, 2, 3} {
        \draw ({120*\i+90}:2) -- ({120*\i-30}:2);
}
      \foreach \i/\t in {1/a-, 2/empty , 3/empty } {
        \pgfmathparse{int(Mod(\i,3))}
        \node[b-vertex, label={int(\i/3)*180 - 90}:$u_{\pgfmathresult}$] (u\i) at ({90-120*\i}:2) {};
}
      \foreach[count=\i] \p in {left, right} 
      \node[b-vertex, "$v_{\i}$" \p] (v\i) at ({270-120*\i}:1) {};
      \node[a-vertex, "$v_{0}$" below] (v0) at ({270}:1) {};
      \draw[witnessed edge] (u3) -- (v0);
      \draw (v1) -- (v2);
\uncertain{v2}{v0} \uncertain{v0}{v1}
    \end{tikzpicture}
    \caption{}
    \label{fig:sun+1-1}
  \end{subfigure}
  \begin{subfigure}[b]{0.2\linewidth}
    \centering
    \tikzstyle{every node}=[empty vertex]
    \begin{tikzpicture}[scale=.5]
      \node["$u_{0}$"] (u0) at ({90}:2) {};      
      \foreach \i in {1, 2} {
        \node["$u_{\i}$" below] (u\i) at ({90-120*\i}:2) {};
        \draw (u\i) -- (u0);
      }
      \foreach[count=\i] \p in {left, right} 
      \node["$v_{\i}$" \p] (v\i) at ({270-120*\i}:1) {};
      \draw (v1) -- (v2);
      \draw (u0) -- ++(1.5, 0) node["$v_{0}$"] (v0) {} -- ++(1.5, 0) node["$s$"] (s) {};
      \draw[densely dotted, thick] (v0) -- (v2);
    \end{tikzpicture}
    \caption{}
    \label{fig:sun+1-2}
  \end{subfigure}
  \caption{Illustrations for the proof of Lemma~\ref{lem:sun+1}.
    The edge~$u_{0} v_{2}$ in~\ref{fig:sun+1-4} and the edge~$v_{0} v_{2}$ in~\ref{fig:sun+1-2} may or may not be present.}
  \label{fig:lem:rising-sun-1}
\end{figure}

\begin{lemma}\label{lem:sun+1}
  If~$G^{s}$ contains Configuration~\ref{fig:ddag+e-simplified} on six vertices, then~$G$ is isomorphic to~$\otimes(1,3)$ or Figure~\ref{fig:extended-net}.
\end{lemma}
\begin{proof}We use the labels in Figure~\ref{fig:sun+1}.
  We claim that
  \begin{equation}
    \label{eq:14}
\text{neither~$u_{1} v_{2}$ nor~$u_{2} v_{1}$ is a collateral edge.}
  \end{equation}
  We consider~$u_{1} v_{2}$, and the other is symmetric.
  Suppose that~$u_{1} v_{2}$ is collateral, hence~$v_{2}\in N_{G}(s)$.  There is a witness~$w$ of the clique~$\{v_{0}, u_{1}, v_{2}\}$ by Proposition~\ref{lem:unwitnessed}.
  By Proposition~\ref{lem:nonadjacent->dominate},~$N_{G^{s}}(w)\cap U\subseteq N_{G^{s}}(v_{2})\cap U$.
  They cannot be equal by A\ref{assumption:replacable}.
  Since~$(G - u_{1})^{s}$ does not contain an annotated copy of the same configuration (with~$u_{1}$ replaced by~$w$), precisely one of~$u_{0}$ and~$v_{1}$ is in~$N_{G^{s}}(w)$.
  If~$u_{0}\in N_{G^{s}}(w)$, then~$u_{0} v_{2}$ is not collateral (Proposition~\ref{lem:P3}) and hence~$u_{0} v_{0} v_{2} u_{1} w$ is a hole of~$G$.
  Hence~$v_{1}\in N_{G^{s}}(w)$.
  By Proposition~\ref{lem:P3} and Corollary~\ref{lem:P4},~$v_{1}\in N_{G}(s)$.
  But then~$(G - u_{1})^{s}$ contains Configuration~\ref{fig:ddag+2e-unlabeled unrestricted} ($U\cup\{w\}\setminus\{u_{1}\}$), violating Lemma~\ref{thm:forbidden-configurations-unrestricted}.
  This justifies~\eqref{eq:14}.
 
  For~$i = 1, 2$, let~$w_{i}$ be a witness of~$u_{0} v_{i}$ (note that~$u_{0}$ is a witness of~$u_{0} v_{i}$ if it is not collateral).
We argue that
  \begin{equation}
    \label{eq:26}
    V(G) = U\cup\{s, w_{1}, w_{2}\}.
  \end{equation}
  We consider the graph~$(G[U\cup\{s, w_{1}, w_{2}\}])^{s}$.
By~\eqref{eq:14},~$u_{1}$ and~$u_{2}$ witness~$v_{0} v_{2}$ and~$v_{0} v_{1}$, respectively.
  Thus, the only edges in~$G^{s}[U]$ that might be missing in~$(G[U\cup\{s, w_{1}, w_{2}\}])^{s}$ are~$u_{0} v_{0}$ and~$v_{1} v_{2}$.
  There is a hole if neither~$u_{0} v_{0}$ nor~$v_{1} v_{2}$ is present, Configuration~\ref{fig:sun-unlabeled} if only~$v_{1} v_{2}$ is present, or Configuration~\ref{fig:p5x1-unlabeled} if only~$u_{0} v_{0}$ is present.
  All of them violate A\ref{assumption:minimality}.

  Next, we argue that
  \[
v_{1}, v_{2}\in V(G)\setminus N_{G}[s].
  \]
  Suppose for contradiction that at least one of~$v_{1}$ and~$v_{2}$ is in~$N_{G}(s)$; assume that~$v_{1}\in N_{G}(s)$.
  If~$v_{2}\not\in N_{G}(s)$, then~$w_{2} = u_{0}$.
  Since~$G$ is connected,~$u_{2}$ has a neighbor in~$G$.  It can only be~$w_{1}$ by~\eqref{eq:14} and~\eqref{eq:26}, which means that~$u_{0} v_{1}$ is collateral.
  Then~$v_{0} w_{1}\in E(G^{s})$ by Proposition~\ref{lem:nonadjacent->dominate}.
By Proposition~\ref{lem:nonadjacent->dominate} and A\ref{assumption:replacable},~$N_{G^{s}}(w_{1})\subsetneq N_{G^{s}}(v_{1})\cap U$, and hence~$v_{2}$ is not adjacent to~$w_{1}$.  But then the edge~$v_{1} v_{2}$, which is collateral by Corollary~\ref{lem:P4}, does not have a witness.
  Hence,~$v_{2}\in N_{G}(s)$.
  Because no vertex in~$U$ can witness~$u_{0} v_{0}$, at least one of~$u_{0}v_{1}$ and~$u_{0}v_{2}$ is collateral.
  By Lemma~\ref{lem:witness-N[s]},~$\{v_{0}, v_{1}, v_{2}\}$ has a witness~$w$ (note that~$G$ is not isomorphic to~$\overline{S_{3}^{+}}$ because~$|V(G)| > |U\cup \{s\}| = 7$).
  By~\eqref{eq:26},~$w\in \{w_{1}, w_{2}\}\setminus \{u_{0}\}$.
  Assume without loss of generality that~$w = w_{1}$.
  Note that~$w$ is a witness of~$\{u_{0}, v_{0}, v_{1}, v_{2}\}$, and it also witnesses the edge~$u_{0} v_{0}$ in particular; see Figure~\ref{fig:sun+1-3}.
    By Assumption (minimality),~$V(G) = U\cup\{s, w\}$: i.e., $w = w_{2}$ if~$u_{0} v_{2}$ is collateral.
    Then~$u_{1}$,~$u_{2}$, and~$w_{1}$ have degree one, and their neighbors are~$v_{1}$,~$v_{2}$, and~$u_{0}$, respectively; the only edge that might be missing among~$u_{0}$, $v_{0}$, $v_{1}$, and~$v_{2}$ are~$u_{0} v_{2}$; see Figure~\ref{fig:sun+1-4}.
Depending on whether~$u_{0} v_{2}$ is present,~$G - s$ is isomorphic to either~$\overline{S_{3}^{+}}$ or~$\otimes(1,3)$, both violating Assumption (minimality).

    Then~$V(G) = U\cup\{s\}$ because~$w_{1} = w_{2} = u_{0}$; see Figure~\ref{fig:sun+1-1}.
    The witness of~$u_{0} v_{0}$ must be~$v_{1}$ or~$v_{2}$.
    Assume without loss of generality that~$v_{1}$ witnesses~$u_{0} v_{0}$.
    The only undecided edge is~$v_{0} v_{2}$, which may or may not be collateral.
Thus,~$G - \{s, v_{0}\}$ is isomorphic to~$G^{s} - \{s, v_{0}\}$, and~$N_{G}(v_{0})$ is either~$\{s, u_{0}\}$ or~$\{s, u_{0}, v_{2}\}$; see Figure~\ref{fig:sun+1-2}.
    Then~$G$ is isomorphic to either~$\otimes(1,3)$ or Figure~\ref{fig:extended-net}, respectively, violating Assumption (minimality).
\end{proof}

\begin{figure}[ht]
  \centering \small
  \begin{subfigure}[b]{0.24\linewidth}
    \centering
    \begin{tikzpicture}[label distance=-2pt, scale=.7]
      \node[empty vertex] (c) at (0, 0) {} ++ (.2, -.3) node {$v_{2}$};
      \foreach \i/\l/\p in {1/c/, 2/w_{3}/below, 3/w_{0}/below} {
          \node[empty vertex, "$\l$" \p] (u\i) at ({120*\i-30}:2) {};
          \draw ({120*\i-90}:1) -- (u\i) -- ({120*\i+30}:1);
          \draw ({120*\i-90}:1) -- ({120*\i+30}:1);
      }
      \foreach[count =\j from 3] \i/\l/\p in {1/v_{1}/, 2/u/below, 3/v_{3}/} {
        \node[empty vertex, "$\l$" \p] (v\i) at ({120*\i+30}:1) {};
        \draw (v\i) -- (c) -- (u\i);
      }
      \foreach \i/\l in {1/2, 3/1} {
        \draw (v\i) -- ({90 - 60*(\i-2)}:2) node[empty vertex, "$w_{\l}$"] {};
      }
    \end{tikzpicture}
    \caption{$G_{1} = \otimes(3, 1)$}
\end{subfigure}
  \begin{subfigure}[b]{0.19\linewidth}
    \centering
    \begin{tikzpicture}[scale=0.85]
      \def\n{4}
      \def\radius{1.}      
\foreach[count=\i from 0] \v/\t in {u/b-, v_{1}/a-, v_{2}/a-, v_{3}/a-} {
        \pgfmathsetmacro{\angle}{(-1 - \i) * (360 / \n)}
        \node[\t vertex] (v\i) at (\angle:\radius) {};
        \node at (\angle:{\radius*1.4}) {$\v$};
      }
      \foreach[count = \i from 0] \p in {below, above, above, below} {
        \pgfmathsetmacro{\angle}{(-1.5 - \i) * (360 / \n)}
        \node[gray, b-vertex, "$w_\i$" \p] (w\i) at (\angle:{\radius*1.414}) {};
\pgfmathparse{int(Mod(\i+1,4))}
        \draw (v\i) -- (v\pgfmathresult);
        \draw[gray] (v\i) -- (w\i) -- (v\pgfmathresult);
      }
      \draw[witnessed edge] (v3) -- (v0) -- (v1);
    \end{tikzpicture}
    \caption{$G_{1}^{c}$}
  \end{subfigure}
  \begin{subfigure}[b]{0.24\linewidth}
    \centering
    \begin{tikzpicture}[label distance=-2pt, xscale=1]
      \node[b-vertex, "$c$"] (x0) at (2.5, 1.75) {};
      \node[b-vertex, "$v_{1}$" left] (v1) at (2, 1) {};
      \node[a-vertex, "$u$" right] (v2) at (3, 1) {};
      \foreach \i in {1, 2} {
        \draw (x0) -- (v\i);
      }

      \foreach[count=\i] \t/\l/\x in {b-/w_{3}/1.25, a-/v_{3}/2, a-/v_{2}/3, b-/w_{1}/3.75} {
        \node [\t vertex, "$\l$" below] (u\i) at (\x, 0) {};
      }
      \foreach \i in {1, 2, 3} \draw (v1) -- (u\i);
      \foreach \i in {2, 3, 4} \draw (v2) -- (u\i);
      \draw (u1) -- (u2) -- (u3) -- (u4);
      \foreach \v in {v1, v2, u3} \draw[witnessed edge] (u2) -- (\v);
      \foreach \v in {v1, v2} \draw[witnessed edge] (u3) -- (\v);
      \draw[witnessed edge] (v1) -- (v2);
    \end{tikzpicture}
    \caption{$G_{1}^{w_{0}}$}
\end{subfigure}
  \begin{subfigure}[b]{0.23\linewidth}
    \centering
    \begin{tikzpicture}[label distance=-2pt, xscale=1]
      \node[b-vertex, "$c$"] (x0) at (2.5, 1.75) {};
      \foreach \i in {1, 2} {
        \node[b-vertex, label={\i*180}:$v_{\i}$] (v\i) at ({1+\i}, 1) {};
        \draw (x0) -- (v\i);
      }

      \foreach[count=\i] \t/\l/\x in {b-/w_{2}/1.25, a-/v_{3}/2, b-/u/3, b-/w_{0}/3.75} {
        \node [\t vertex, "$\l$" below] (u\i) at (\x, 0) {};
      }
      \foreach \v in {v1, v2, u3}
      \draw[witnessed edge] (u2) -- (\v);      
      \foreach \i in {1, 2, 3} \draw (v1) -- (u\i);
      \foreach \i in {2, 3, 4} \draw (v2) -- (u\i);
      \draw (u1) -- (u2) -- (u3) -- (u4)  (v1) -- (v2);
    \end{tikzpicture}
    \caption{$G_{1}^{w_{1}}$}
\end{subfigure}

  \begin{subfigure}[b]{0.24\linewidth}
    \centering
    \tikzstyle{every node}=[empty vertex]
    \begin{tikzpicture}[label distance=-3pt, scale=.9]
      \draw (-.5, 0) node[empty vertex, "$v_{1}$" left] (c1) {} -- (.5, 0) node[empty vertex, "$v_{2}$" right] (c2) {} -- (0, .25) node["$c$"] {} -- (c1);
      \draw (c1) -- (-1, 1) -- (1, 1) -- (c2) (c1) -- (-1.5, -1) -- (1.5, -1) -- (c2);
      \foreach \i/\l in {1/w_{3}, 2/u_{1}, 3/w_{0}} {
        \node["$\l$" above] (u\i) at ({\i-2}, 1) {};
      }
      \foreach \i in {2, 3} {
        \node["$u_{\the\numexpr5-\i\relax}$" below] (v\i) at ({\i-2.5}, -1) {};
      }
      \foreach \i/\l in {1/w_{2}, 4/w_{1}} {
        \node["$\l$" below] (v\i) at ({\i-2.5}, -1) {};
      }      
      \foreach \v/\l in {u/{2}, v/{2, 3}} {
        \foreach \i in \l 
          \draw (c1) -- (\v\i) -- (c2);
      }
    \end{tikzpicture}
    \caption{$G_{2} = \otimes(1,1,1,2)$}
  \end{subfigure}
  \begin{subfigure}[b]{0.19\linewidth}
    \centering
    \begin{tikzpicture}[scale=.65]
      \def\n{5}
      \def\radius{1.5}
      \foreach[count=\i from 0] \v/\t in {u_{3}/b-, v_{2}/a-, u_{1}/b-, v_{1}/a-, u_{2}/b-} {
        \pgfmathsetmacro{\angle}{90 - (2 - \i) * (360 / \n)}
        \node[\t vertex] (v\i) at (\angle:\radius) {};
        \node at (\angle:{\radius*1.4}) {$\v$};
      }
      \foreach[count = \i from 0] \l/\p in {2/below, 3/, 0/, 1/below} {
        \pgfmathsetmacro{\angle}{90 - (1.5 - \i) * (360 / \n)}
        \node[gray, b-vertex, "$w_\l$" \p] (w\i) at (\angle:{\radius*1.2}) {};
\draw[witnessed edge] (v\i) -- (v\inteval{\i+1});
        \draw[gray] (v\i) -- (w\i) -- (v\inteval{\i+1});
      }
      \draw (v0) -- (v\inteval{\n-1});      
    \end{tikzpicture}
    \caption{$G_{2}^{c}$}
  \end{subfigure}
  \begin{subfigure}[b]{0.24\linewidth}
    \centering
    \begin{tikzpicture}[label distance=-2pt, xscale=.7]
      \node[b-vertex, "$c$"] (x0) at (3, 1.75) {};
      \node[b-vertex, "$v_{1}$" left] (v1) at (2.3, 1) {};
      \node[a-vertex, "$u_{1}$" right] (v2) at (3.7, 1) {};
      \foreach \i in {1, 2} {
        \draw (x0) -- (v\i);
      }

      \foreach[count=\i] \t/\l in {b-/w_{3}, a-/v_{2}, b-/u_{3}, b-/u_{2}, b-/w_{1}} {
        \node [\t vertex, "$\l$" below] (u\i) at (\i, 0) {};
      }
      \foreach \i in {1, 2, 3, 4} \draw (v1) -- (u\i);
      \foreach \i in {2, 3, 4, 5} \draw (v2) -- (u\i);
      \draw (u1) -- (u2) -- (u3) -- (u4) -- (u5);
      \foreach \v in {v1, v2, u3} \draw[witnessed edge] (u2) -- (\v);
\draw[witnessed edge] (v1) -- (v2);
    \end{tikzpicture}
    \caption{$G_{2}^{w_{0}}$}
\end{subfigure}
  \begin{subfigure}[b]{0.23\linewidth}
    \centering
    \begin{tikzpicture}[label distance=-2pt, xscale=1]
      \node[b-vertex, "$c$"] (x0) at (2.5, 1.75) {};
      \node[a-vertex, "$u_{2}$" left] (v1) at (2, 1) {};
      \node[b-vertex, "$v_{1}$" right] (v2) at (3, 1) {};
      \foreach \i in {1, 2} {
        \draw (x0) -- (v\i);
      }

      \foreach[count=\i] \t/\l/\x in {b-/w_{0}/1.25, b-/u_{1}/2, a-/v_{2}/3, b-/u_{3}/3.75} {
        \node [\t vertex, "$\l$" below] (u\i) at (\x, 0) {};
      }
      \foreach \v in {u4, v2, u2} \draw[witnessed edge] (u3) -- (\v);
      \foreach \v in {u4, v2} \draw[witnessed edge] (v1) -- (\v);
      \foreach \i in {1, 2, 3} \draw (v1) -- (u\i);
      \foreach \i in {2, 3, 4} \draw (v2) -- (u\i);
      \draw (u1) -- (u2) -- (u3) -- (u4)  (v1) -- (v2);
    \end{tikzpicture}
    \caption{$G_{1}^{w_{1}}$}
\end{subfigure}
  \caption{Some flipping results of $\otimes$ graphs.  In all flipped graphs, the universal vertex is omitted; In c, d, g, and h, the witness(es) are omitted.
}
  \label{fig:flipping-otimes}
\end{figure}
Recall that when~$G$ is an~$\otimes$ graph,~$G^{c}$ contains a hole.
For any other simplicial vertex~$s$ different from~$c$, the graph~$G^{s}$ contains Configuration~\ref{fig:ddag+e-simplified},~\ref{fig:ddag+2e-simplified}, or~\ref{fig:ddag-simplified}.
To observe this, it is more convenient to transform~$G^{c}$ into~$G^{s}$,
instead of constructing~$G^{s}$ directly from~$G$.
Note that~$s$ is a simplicial vertex of a gadget, and a witness of some edge of the hole in~$G^{c}$, of which at least one end is in~$N_{G}(c)$.
In the configuration in~$G^{s}$, the degree-two vertex at the top is~$c$, and its two neighbors are the ends of the collateral edge witnessed by~$s$ in~$G^{s}$.
See Figure~\ref{fig:flipping-otimes} for some examples; note that the ends of the path at the bottom of the configuration may or may not be on the hole in~$G^{c}$.

\begin{figure}[ht]
  \centering \small
  \,
  \begin{subfigure}[b]{0.2\linewidth}
    \centering
    \begin{tikzpicture}[scale=.75]
      \node[b-vertex, "$u_0$"] (x0) at (2.5, 1.75) {};
      \foreach \i in {1, 2} {
        \node[a-vertex, "$v_\i$" right, rotate =180*\i] (v\i) at ({1+\i}, 1) {};
        \draw (x0) -- (v\i);
      }

      \foreach[count=\i] \t/\l in {b-/u_{1}, empty /x_{1}, empty /x_{p}, b-/u_{2}} {
        \node [\t vertex, "$\l$" below] (u\i) at (\i, 0) {};
      }
      
\foreach \i in {1, 2, 3} \draw (v1) -- (u\i);
      \foreach \i in {2, 3, 4} \draw (v2) -- (u\i);
      \draw (u1) -- (u2) (u2) edge[dashed] (u3)  (u3) -- (u4)  (v1) -- (v2);
      \draw[witnessed edge] (v1) -- (u4)  (v2) -- (u1);
    \end{tikzpicture}
    \caption{$p \ge 2$}
    \label{fig:configuration-fan-2-n}
  \end{subfigure}
  \,
  \begin{subfigure}[b]{0.2\linewidth}
    \centering
    \begin{tikzpicture}[scale=.75]
      \node[b-vertex, "$u_0$"] (x0) at (2.5, 1.75) {};
      \foreach \i in {1, 2} {
        \node[a-vertex, "$v_\i$" right, rotate =180*\i] (v\i) at ({1+\i}, 1) {};
      }
      \draw (x0) -- (v1);

      \foreach[count=\i] \t/\l in {b-/u_{1}, empty /x_{1}, empty /x_{p}, b-/u_{2}} {
        \node [\t vertex, "$\l$" below] (u\i) at (\i, 0) {};
      }
      
\foreach \i in {1, 2, 3} \draw (v1) -- (u\i);
      \foreach \i in {2, 3} \draw (v2) -- (u\i);
\uncertain{v2}{u4} \uncertain{x0}{v2}
      \draw (u1) -- (u2) (u2) edge[dashed] (u3)  (u3) -- (u4)  (v1) -- (v2);
      \draw[witnessed edge] (v1) -- (u4) {};
    \end{tikzpicture}
    \caption{$p \ge 2$}
    \label{fig:ddag+e-2a}
  \end{subfigure}
  \,
  \begin{subfigure}[b]{0.2\linewidth}
    \centering
    \begin{tikzpicture}[scale=.75]
      \node[b-vertex, "$u_0$"] (x0) at (2.5, 1.75) {};
      \foreach \i/\t in {1/a-, 2/b-} {
        \node[\t vertex, label ={180*\i}:$v_\i$] (v\i) at ({1+\i}, 1) {};
        \draw (x0) -- (v\i);
      }

      \foreach[count=\i] \t/\l in {b-/u_{1}, empty /x_{1}, empty /x_{p}, b-/u_{2}} {
        \node [\t vertex, "$\l$" below] (u\i) at (\i, 0) {};
      }
      
\foreach \i in {1, 2, 3} \draw (v1) -- (u\i);
      \foreach \i in {2, 3, 4} \draw (v2) -- (u\i);
      \draw (u1) -- (u2) (u2) edge[dashed] (u3)  (u3) -- (u4);
      \uncertain{v2}{v1}
      \draw[witnessed edge] (v1) -- (u4) {};
    \end{tikzpicture}
    \caption{$p \ge 2$}
    \label{fig:ddag+e-1a}
  \end{subfigure}

    \begin{subfigure}[b]{0.2\linewidth}
    \centering
    \begin{tikzpicture}[scale=.75]
      \node[b-vertex, "$u_0$"] (x0) at (2.5, 1.75) {};
      \foreach \i in {1, 2} {
        \node[a-vertex, label ={180*\i}:$v_\i$] (v\i) at ({1+\i}, 1) {};
\uncertain{x0}{v\i}
      }

      \foreach[count=\i] \t/\l in {b-/u_{1}, empty /x_{1}, empty /x_{p}, b-/u_{2}} {
        \node [\t vertex, "$\l$" below] (u\i) at (\i, 0) {};
      }
      
      \foreach \i in {2, 3} \draw (v1) -- (u\i);
      \foreach \i in {2, 3} \draw (v2) -- (u\i);
      \uncertain{v1}{u1}
      \uncertain{v2}{u4}
      \draw (u1) -- (u2) (u2) edge[dashed] (u3)  (u3) -- (u4)  (v1) -- (v2);
    \end{tikzpicture}
    \caption{$p \ge 2$}
    \label{fig:ddag-labeld-2a}
  \end{subfigure}
  \,
  \begin{subfigure}[b]{0.2\linewidth}
    \centering
    \begin{tikzpicture}[scale=.75]
      \node[b-vertex, "$u_0$"] (x0) at (2.5, 1.75) {};
      \foreach \i/\t in {1/a-, 2/b-} {
        \node[\t vertex, label ={180*\i}:$v_\i$] (v\i) at ({1+\i}, 1) {};
      }

      \foreach[count=\i] \t/\l in {b-/u_{1}, empty /x_{1}, empty /x_{p}, b-/u_{2}} {
        \node [\t vertex, "$\l$" below] (u\i) at (\i, 0) {};
      }
      
      \foreach \i in {2, 3} \draw (v1) -- (u\i);
      \foreach \i in {2, 3, 4} \draw (v2) -- (u\i);
      \draw (u1) -- (u2) (u2) edge[dashed] (u3)  (u3) -- (u4) (v2) -- (x0);
      \uncertain{x0}{v1} \uncertain{v1}{u1} \uncertain{v2}{v1}
    \end{tikzpicture}
    \caption{$p \ge 2$}
    \label{fig:ddag-labeld-1a}
  \end{subfigure}
  \,
  \begin{subfigure}[b]{0.2\linewidth}
    \centering
    \begin{tikzpicture}[scale=.75]
      \node[b-vertex, "$u_0$"] (x0) at (2.5, 1.75) {};
      \foreach \i in {1, 2} {
        \node[b-vertex, "$v_\i$" right, rotate =180*\i] (v\i) at ({1+\i}, 1) {};
        \draw (x0) -- (v\i);
      }

      \foreach[count=\i] \t/\l in {b-/u_{1}, empty /x_{1}, empty /x_{p}, b-/u_{2}} {
        \node [\t vertex, "$\l$" below] (u\i) at (\i, 0) {};
      }
      
      \foreach \i in {1, 2, 3} \draw (v1) -- (u\i);
      \foreach \i in {2, 3, 4} \draw (v2) -- (u\i);
      \draw (u1) -- (u2) (u2) edge[dashed] (u3)  (u3) -- (u4)  (v1) -- (v2);
    \end{tikzpicture}
    \caption{$p \ge 2$}
    \label{fig:ddag-labeld-0a}
  \end{subfigure}
  \caption{Forbidden configurations, reproduced and labeled.}
  \label{fig:group-8}
\end{figure}

\begin{lemma}\label{lem:ddag}
  If~$G^{s}$ contains Configuration~\ref{fig:ddag+e-simplified},~\ref{fig:ddag+2e-simplified}, or~\ref{fig:ddag-simplified}, then~$G$ is isomorphic to the graph in Figure~\ref{fig:the-weird},~$\overline{S_{\ell}^{+}}$ with~$\ell\ge 3$ or~$\otimes(a_{0}, a_{1}, \ldots, a_{2 p - 1})$ with~$\sum a_{i} \ge 4$.
\end{lemma}
\begin{proof}Let~$P$ denote the path~$u_{1} x_{1} \cdots x_{p} u_{2}$ of~$G^{s}$.  For convenience, we may use~$x_{0}$ and~$x_{p+1}$ to refer to~$u_{1}$ and~$u_{2}$, respectively.
    For each~$i = 0, \ldots, p$, if the edge~$x_{i} x_{i+1}$ is collateral, let~$w_{i}$ be its witness.
  Denote by~$W$ these witnesses.

First, we argue that
  \begin{equation}
    \label{eq:6}
\text{Neither edge incident to~$u_{0}$ is collateral.}
  \end{equation}
  If~$u_{0} v_{2}$ in Figure~\ref{fig:ddag+e-2a} is collateral, then~$G^{s}[\{u_{0}, u_{1}, u_{2}, v_{1}, v_{2}, x_{1}\}]$ is Configuration~\ref{fig:double-fork+1-unlabeled unrestricted}, violating Lemma~\ref{thm:forbidden-configurations-unrestricted}.  By Corollary~\ref{lem:P4},~$v_{2} x_{1}$ is collateral in Figure~\ref{fig:ddag-labeld-2a}.
 If~$u_{0} v_{1}$ in Figure~\ref{fig:ddag-labeld-2a} is collateral, then~$(G - u_{1})^{s}$ contains Configuration~\ref{fig:ddag+e-unlabeled unrestricted} ($U\setminus \{u_{1}\}$, when~$x_{1} \in V(G)\setminus N_{G}(s)$), or~$(G - x_{1})^{s}$ contains Configuration~\ref{fig:whipping-top-1-unlabeled unrestricted} ($\{u_{0}, u_{1}, u_{2}, v_{1}, v_{2}, x_{p}\}$, when~$x_{1} \in N_{G}(s)$), which violates Lemma~\ref{thm:forbidden-configurations-unrestricted}.
 By symmetry,~$u_{0} v_{2}$ in Figure~\ref{fig:ddag-labeld-2a} is not collateral.
 Now suppose that~$u_{0} v_{1}$ in Figure~\ref{fig:ddag-labeld-1a} is collateral.
 By Corollary~\ref{lem:P4}, the edge~$v_{1} x_{p}$ is collateral.
 If~$u_{1} v_{1}$ is collateral, then either~$G^{s}[\{u_{0}, u_{1}, u_{2}, v_{1}, x_{p}\}]$ is Configuration~\ref{fig:fork-simplified} when~$x_{p}\in N_{G}(s)$, or~$G^{s}[\{u_{0}, u_{1}, v_{1}, x_{p}\}]$ is Configuration~\ref{fig:claw-simplified} otherwise, violating Lemma~\ref{lem:group-0}.
 Also note that~$x_{p}\in N_{G}(s)$: otherwise,~$G^{s}[\{u_{1}, v_{1}, u_{0}, v_{2}, x_{p}\}]$ is Configuration~\ref{fig:dag+2e-simplified} on five vertices, violating Lemma~\ref{lem:net}.
 By Proposition~\ref{lem:nonadjacent->dominate}, $x_{1}$ cannot witness~$v_{1} x_{p}$ or~$v_{2} x_{p}$ when~$p = 2$.
 Then~$(G - x_{1})^{s}$ contains either Configuration~\ref{fig:holes-unlabeled} ($u_{0} v_{1} x_{p} v_{2}$, when~$x_{1}$ is the only witness of~$v_{1} v_{2}$) or Configuration~\ref{fig:whipping-top-1-unlabeled} ($\{u_{0}, u_{1}, u_{2}, v_{1}, v_{2}, x_{p}\}$). They violate A\ref{assumption:minimality}.
  \begin{equation}
    \label{eq:7}
    \text{Neither~$u_{1} v_{1}$ nor~$u_{2} v_{2}$ is collateral.}
  \end{equation}
  If~$u_{2} v_{2}$ in Figure~\ref{fig:ddag+e-2a} or~\ref{fig:ddag-labeld-2a} is collateral, then~$(G - v_{1})^{s}$ contains Configuration~\ref{fig:dag+e-simplified} ($U\setminus \{v_{1}\}$) or Configuration~\ref{fig:dag+2e-simplified} ($U\setminus \{v_{1},u_{1}\}$).
  It is symmetric when~$u_{1} v_{1}$ in Figure~\ref{fig:ddag-labeld-2a} is collateral.
  Now suppose that~$u_{1} v_{1}$ in Figure~\ref{fig:ddag-labeld-1a} is collateral.
By Corollary~\ref{lem:P4}, the edge~$v_{1} x_{p}$ is collateral.
  If~$x_{p}\not\in N_{G}(s)$, then~$(G - u_{2})^{s}$ contains Configuration~\ref{fig:dag+2e-simplified} ($U\setminus \{u_{2}, v_{2}\}$).
  Hence,~$x_{p}\in N_{G}(s)$.
  By Proposition~\ref{lem:nonadjacent->dominate},~$v_{2}$ cannot witness~$v_{1} x_{p}$ or~$x_{i}x_{i+1}$ for~$i=1,\ldots,p$.
  Then~$(G - v_{2})^{s}$ contains Configuration~\ref{fig:dag+e-simplified} ($U\setminus \{v_{2}\}$), violating A\ref{assumption:minimality}.
  
 We then show that
 \begin{equation}
   \label{eq:34}
   V(G) = U\cup W\cup\{s\}.   
 \end{equation}
 Let~$H = G[U\cup W\cup\{s\}]^{s}$.
 By Assumption (minimality), it suffices to show that all the edges in~$G^{s}[U]$ are present in~$H$.
   For~$i = 0, \ldots, p$, the edge~$x_{i} x_{i+1}$ is collateral in~$H$ if and only if it is collateral in~$G^{s}$, witnessed by~$w_{i}$.
 Thus,~$P$ remains a path in~$H$.
 By~\eqref{eq:6} and~\eqref{eq:7}, for~$i = 1, 2$, the edges~$u_{0} v_{i}$ and~$u_{i} v_{i}$ are present in~$H$.
   Also present in~$H$ is~$v_{1} v_{2}$, witnessed by~$u_{0}$.
   By A\ref{assumption:minimality},~$H$ is chordal, which means that~$N_{H}(v_{1})$ and~$N_{H}(v_{2})$ are consecutive on~$P$.
The remaining edges are between~$\{v_{1}, v_{2}\}$ and~$\{x_{1}, \ldots, x_{p}\}$.
   Suppose that there exists~$i \le p$ such that~$x_{i}\not\in N_{H}(v_{1})$.
   We may assume that~$x_{i-1} v_{1}\in E(H)$ (i.e.,~$i$ is the smallest with this property).  Note that~$x_{i-1} v_{2}\in E(H)$ since~$H$ is chordal.
\begin{itemize}
   \item If~$i = 1$, then~$H[\{v_{2}, u_{0}, v_{1}, u_{1}, x_{1}, x_{2}\}]$ is Configuration~\ref{fig:p5x1-unlabeled unrestricted}.
\item If~$i > 1$, then~$H[\{u_{0}, v_{1}, v_{2}, x_{j-1}, x_{j}, \ldots, x_{i}\}]$, where~$j$ is the smallest index such that~$j\ge 1$ and~$x_{j} v_{2}\in E(H)$, is Configuration~\ref{fig:ddag-unlabeled unrestricted} or~\ref{fig:ddag+e-unlabeled unrestricted}.
   \end{itemize}
   Note that the first case and Configuration~\ref{fig:ddag+e-unlabeled unrestricted} in the the second case can only happen when~$u_{1}v_{2}$ is a collateral edge, i.e., in Figure~\ref{fig:configuration-fan-2-n}.
   It is symmetric if there exists~$i \le p$ such that~$x_{i}\not\in N_{H}(v_{2})$.
   They violate Lemma~\ref{thm:forbidden-configurations-unrestricted}, and hence~\eqref{eq:34} holds.

   \begin{equation}
     \label{eq:19}
\text{If~$u_{1} x_{1}$ is collateral, then~$\{u_{1}, x_{1}, v_{1}, v_{2}\}$ is a witnessed clique of~$G^{s}$.}
   \end{equation}
   Note that~$x_{1}\in N_{G}(s)$, and~$w_{0}$ is a witness of~$u_{1} x_{1}$.
By Proposition~\ref{lem:nonadjacent->dominate},~$N_{G^{s}}(w_{0})\cap U\subseteq N_{G^{s}}(x_{1})\cap U$, and they cannot be equal by A\ref{assumption:replacable}.
If~$x_{2}\in N_{G^{s}}(w_{0})$, then~$v_{1}\in N_{G^{s}}(w_{0})$---otherwise~$(G - u_{0})^{s}$ contains a hole~$v_{1}u_{1}w_{0} x_{2}$---and~$v_{2}\not\in N_{G^{s}}(w_{0})$.
Then~$(G - x_{1})^{s}$ contains Configuration~\ref{fig:ddag+e-unlabeled} or~\ref{fig:ddag-unlabeled} (with~$\{u_{1}, x_{1}\}$ replaced by~$w_{0}$ in~$U$).
In the rest,~$x_{2}\not\in N_{G^{s}}(w_{0})$.
Then we must have~$v_{1}\in N_{G^{s}}(w_{0})$ (otherwise~$(G - u_{1})^{s}$ contains Configuration~\ref{fig:dag+e-simplified} or~\ref{fig:dag-unlabeled}, $U\cup \{w_{0}\}\setminus \{u_{1}, v_{2}\}$), and~$v_2 \in N_{G^{s}}(w_{0})$ (otherwise~$(G - u_{1})^{s}$ contains Configuration~\ref{fig:ddag+e-unlabeled} or~\ref{fig:ddag-unlabeled}, $U\cup \{w_{0}\}\setminus \{u_{1}\}$).
By Proposition~\ref{lem:P3},~$v_{2}\in N_{G}(s)$ (note that~$x_{1} v_{2}$ is collateral by Proposition~\ref{lem:nonadjacent->dominate} when~$v_{2}\not\in N_{G}(s)$).
By Propositions~\ref{lem:nonadjacent->dominate} and~\ref{lem:P3},~$w_{0}$ is not adjacent to any other vertex in~$W$.
The edge~$v_{1} w_{0}$ is not collateral because there is no witness ($w_{0} u_{1} x_{1} v_{1}$ is a hole of~$G$ if~$u_{1}$ is a witness of~$v_{1} w_{0}$).
If~$v_{2}\in N_{G}(w_{0})$, then~$(G - u_{1})^{s}$ contains Configuration~\ref{fig:ddag+e-unlabeled} or~\ref{fig:ddag+2e-unlabeled} (with~$u_{1}$ replaced by~$w_{0}$ in~$U$).
Thus,~$\{u_{1}, x_{1}, v_{1}, v_{2}\}$ is a clique, where~$u_{1} v_{2}$ is a collateral edge, and~$w_{0}$ is a witness.
By symmetry,
   \begin{equation}
    \label{eq:16}
       \text{If~$u_{2} x_{p}$ is collateral, then~$\{u_{2}, x_{p}, v_{1}, v_{2}\}$ is a witnessed clique of~$G^{s}$.}
   \end{equation}
  By definition, every vertex in~$W$ is a witness of a collateral edge on~$P$.  At least one end of this edge is in~$N_{G}(s)$, which means that~$W\cap N_{G}(u_{0}) = \emptyset$ by Proposition~\ref{lem:nonadjacent->dominate}.
  By \eqref{eq:6}, the vertex~$v_i, i=1, 2,$ is in~$N_{G}(u_{0})$ if and only if it is not in~$N_{G}(s)$.
  On the other hand, the vertex~$x_i, i= 0, \ldots, p+1,$ is in~$N_{G}(u_{0})$ if and only it is in~$N_{G}(s)$, and then it is adjacent to~$\{v_{1}, v_{2}\} \setminus N_{G}(s)$.
  Thus, the vertex~$u_{0}$ is simplicial in~$G$, and the graph~$G^{u_{0}}$ is well defined.
The first observation on~$G^{u_{0}}$ is that
 \begin{equation}
   \label{eq:30}
   \text{for~$i = 1, 2$, the edge~$v_{i} u_{3-i}$ is present in~$G^{u_{0}}$, while~$v_{i} u_{i}$ is not.}   
 \end{equation}
 Since~$v_{i} u_{3 - i}\in E(G)$ if and only if~$v_{i}\in N_{G}(s)\setminus N_{G}(u_{0})$, the first follows directly from the constructions of~$G^{s}$ and~$G^{u_{0}}$.
The second follows from~\eqref{eq:7} when~$v_{i}\in N_{G}(s)\setminus N_{G}(u_{0})$.
If~$v_{i}\in N_{G}(u_{0})\setminus N_{G}(s)$, then the edge~$u_{i} v_{i}$, if it is present in~$G^{u_{0}}$, must be collateral, and its witness can only be~$w_{0}$. 
However, this is impossible by~\eqref{eq:19}.
Also by construction,
 \[
   \text{$N_{G^{u_{0}}}(s) = \{v_{1}, v_{2}\}$, and $s$ is a witness of the edge~$v_{1} v_{2}$ in~$G^{u_{0}}$.}
 \]

 Note that~$P$ remains an induced path in~$G^{u_{0}}$.
 Let~$a$ be the smallest index such that~$x_{a}\in N_{G^{u_{0}}}(v_{1})$, and~$b$ be the largest index such that~$x_{b}\in N_{G^{u_{0}}}(v_{2})$.
 Note that~$a \ge 1$ and~$b \le p$ by~\eqref{eq:30}.
 The lemma follows from Lemma~\ref{lem:hole} if~$G^{u_{0}}$ is not chordal.

 In the rest,~$G^{u_{0}}$ is chordal.  Then~$N_{G^{u_{0}}}(v_{1})$ and~$N_{G^{u_{0}}}(v_{2})$ are consecutive on~$P$, and~$a \le b$ (otherwise,~$v_{1} v_{2} x_{b} x_{b+1} \cdots x_{a}$ is a hole).
 Hence,
 \[
   1 \le a \le b \le p.
 \]
 If~$b = 1$, then~$G^{u_{0}}$ contains Configuration~\ref{fig:sun-unlabeled} ($\{s, v_{2}, v_{1}, u_{1}, x_{1}, x_{2}\}$), and the lemma follows from Lemma~\ref{lem:sun}.
 If~$1 < b < p$, then~$(G - u_{2})^{u_{0}}$ contains Configuration~\ref{fig:dag-unlabeled unrestricted} ($\{s, v_{1}, x_{a-1}, x_{a}, \ldots, x_{p}\}$, when~$u_{2}$ is the only witness of the edge~$v_{1} x_{p}$ in~$G^{u_{0}}$), or Configuration~\ref{fig:ddag-unlabeled unrestricted} ($\{s, v_{1}, v_{2}, x_{a-1}, x_{a}, \ldots, x_{b}, x_{b+1}\}$, otherwise).  Both violate Lemma~\ref{thm:forbidden-configurations-unrestricted}, and it is symmetric if~$a > 1$.
 Thus,~$a = 1$ and~$b = p$.
 In other words,
 \[
   \{x_{1}, \ldots, x_{p}\} \subseteq N_{G^{u_{0}}}(v_{1})\cap N_{G^{u_{0}}}(v_{2}).
 \]
 Since~$v_{1}$ and~$x_{i}, i = 1, \ldots, p,$ are adjacent in both~$G^{s}$ and~$G^{u_{0}}$, they are adjacent in~$G$.  (Otherwise, we must have~$x_{i}\in N_{G}(s)$, and then Proposition~\ref{lem:nonadjacent->dominate} is violated in~$G^{s}$ or~$G^{u_{0}}$.)  The same holds for~$v_{2}$ and~$x_{i}$.  Thus,
 \[
   \{x_{1}, \ldots, x_{p}\} \subseteq N_{G}(v_{1})\cap N_{G}(v_{2}).
 \]
 
 Thus,~$G^{u_{0}}[U]$ is always Configuration~\ref{fig:ddag-simplified}, i.e., one of Configurations~\ref{fig:ddag-labeld-2a}, \ref{fig:ddag-labeld-1a}, and~\ref{fig:ddag-labeld-0a} (note that the positions of~$v_{1}$ and~$v_{2}$ switched).
 Applying the same argument, with the roles of~$s$ and~$u_{0}$ switched, we can conclude that~$G^{s}[U]$ is always Configuration~\ref{fig:ddag-simplified}.
 In other words, if~$G^{s}[U]$ is Configuration~\ref{fig:ddag+e-simplified} or~\ref{fig:ddag+2e-simplified} (i.e., Configuration~\ref{fig:configuration-fan-2-n}, \ref{fig:ddag+e-2a}, or~\ref{fig:ddag+e-1a}), then either it violates Assumption (minimality), or~$G^{u_{0}}$ contains a hole.
   Moreover,~$G^{s}[U]$ is Configuration~\ref{fig:ddag-labeld-0a} if and only if~$G^{u_{0}}[U]$ is Configuration~\ref{fig:ddag-labeld-2a}.
   In the rest, we may assume without loss of generality that~$G^{s}$ is Configuration~\ref{fig:ddag-labeld-2a} or~\ref{fig:ddag-labeld-1a}.

   Note that neither~$u_{1} x_{1}$ nor~$u_{2} x_{p}$ is collateral by~\eqref{eq:19}.
   Finally, we argue that
   \[
     \text{$w_{p-1}$ witnesses the clique~$\{v_{1}, v_{2}, x_{p-1}, x_{p}\}$.}
   \]
   If~$\{v_{1}, v_{2}, x_{p-2}, x_{p-1}\} \subseteq N_{G}(s)$, it has a witness by Proposition~\ref{lem:unwitnessed}, which has to be~$w_{p-2}$ by~\eqref{eq:34}.
   Hence, we suppose that~$\{v_{1}, v_{2}, x_{p-2}, x_{p-1}\}\not\subseteq N_{G}(s)$.
   By~\eqref{eq:34}, the witness of the collateral edge~$v_{1} x_{p}$ must be~$w_{p-1}$.
   If~$w_{p-1}$ is not adjacent to~$v_{2}$ in~$G^{s}$, then~$(G - u_{1})^{s}$ contains Configuration~\ref{fig:sun-unlabeled} ($\{u_{0}, u_{2}, v_{1}, v_{2}, x_{p}, w_{p-1}\}$).
   Hence,~$w_{p-1}$ and~$v_{2}$ are adjacent in~$G^{s}$, and we are done if~$v_{2}\not\in N_{G}(s)$.
   In the rest,~$v_{2}\in N_{G}(s)$, and hence precisely one of~$x_{p-1}$ and~$x_{p}$ is in~$N_{G}(s)$.
   Now that~$v_{2} w_{p-1}$ is a collateral edge, it must be witnessed by~$w_{p-2}$ by~\eqref{eq:34} (hence~$p > 2$).
   By Proposition~\ref{lem:P3},~$x_{p-2}, x_{p-1}\in N_{G}(s)$.
   But then there cannot be a witness of the collateral edge~$v_{1} x_{p-1}$ in~$G^{u_{0}}$.

   As a result,~$x_{p}\in N_G(s)$; otherwise,~$(G - u_{2})^{s}$ contains Configuration~\ref{fig:ddag+e-unlabeled} ($U\setminus \{u_{2}\}$).
   But then there cannot be a witness of the collateral edge~$v_{2} x_{p}$ in~$G^{u_{0}}$.  This concludes the proof.
\end{proof}

Summarizing this section, we conclude the proof of Theorem~\ref{thm:main}, for which we need to recall two results mentioned earlier.

\begin{proposition}[folklore]
  \label{lem:connected}
  Let~$G$ be a minimal chordal graph that is not a circular-arc graph.  If~$G$ is not connected, then~$G$ is isomorphic to net$^\star$, whipping top$^\star$, or~$\otimes(2, a), a \ge 1$.
\end{proposition}

\begin{theorem}[\cite{cao-24-split-cag}]\label{thm:split}
  A split graph is a circular-arc graph if and only if it does not contain an induced copy of any graph in Figure~\ref{fig:net-star}--\ref{fig:whipping-top-derived},~$\overline{S_{k}^{+}}, k \ge 3$, or~$\otimes(a, b)$ with~$1 \le b \le 2 \le a$.
\end{theorem}

\begin{proof}[Proof of Theorem~\ref{thm:main}]
  Necessity.
  It is straightforward to check that long claw, whipping top$^\star$, and all six graphs in Figure~\ref{fig:chordal-non-cag} are not circular-arc graphs.
  By Theorem~\ref{thm:split} and Proposition~\ref{lem:o-graphs}, neither~$\overline{S_{k}^{+}}, k \ge 3$, nor any~$\otimes$ graph can be a circular-arc graph.

  For sufficiency, let~$G$ be a minimal chordal graph that is not a circular-arc graph.
If~$G$ is not connected, then~$G$ is isomorphic to net$^\star$, whipping top$^\star$, or~$\otimes(2, a), a \ge 1$ (Proposition~\ref{lem:connected}).
  Henceforth,~$G$ is a minimal connected chordal graph that is not a circular-arc graph. 
  By Theorem~\ref{thm:simplified-forbidden-configurations},~$G^{s}$ contains a forbidden configuration in Figure~\ref{fig:simplified-forbidden-configurations}.
  The statement then follows from
  Lemma~\ref{lem:sun} (Configuration~\ref{fig:sun-simplified}),
  Lemma~\ref{lem:whipping-top} (Configuration~\ref{fig:whipping-top-simplified}),
  Lemma~\ref{lem:p5x1} (Configuration~\ref{fig:p5x1-simplified}),  
  Lemma~\ref{lem:whipping-top-1} (Configuration~\ref{fig:whipping-top-1-simplified}),
  Lemma~\ref{lem:hole} (Configuration~\ref{fig:holes}), and
  Lemma~\ref{lem:ddag} (Configurations~\ref{fig:ddag+e-simplified},~\ref{fig:ddag+2e-simplified}, and~\ref{fig:ddag-simplified}).
\end{proof}

\appendix
\section{Appendix}
\label{sec:appendix}

\subsection{Omitted illustrations}

The family of graphs~$\overline{S_{3}^{+}}$ is illustrated in Figure~\ref{fig:sun-complement+}, and the smallest~$\otimes$ graphs are shown in Figures~\ref{fig:four-part} and~\ref{fig:Spq}.
The are 20 chordal forbidden induced subgraphs of order at most ten.  Six are given in Figure~\ref{fig:chordal-non-cag}, and the others are:
$\otimes(1, a), 3\le a \le 6$; 
$\otimes(2, a), 1\le a \le 4$; 
$\otimes(3, a), 1\le a \le 2$; 
$\otimes(1, 1, 1, a), 1\le a \le 2$; and
$\overline{S_{k}^{+}}, k = 3, 4$.
Of them, only Figure~\ref{fig:long-claw-derived}, Figure~\ref{fig:(S4+)-e},~$\overline{S_{3}^{+}}$, and~$\otimes(1, 1, 1, 1)$ have been previously known in literature~\cite{bonomo-09-partial-characterization-cag, francis-14-blocking-quadruple}.

\begin{figure}[ht]
  \centering \small
  \begin{subfigure}[b]{0.16\linewidth}
    \centering
    \begin{tikzpicture}[scale=.6]
      \node[filled vertex] (c) at (0, 0) {};
      \foreach[count =\j] \i in {1, 2, 3} 
      \draw ({120*\i-30}:2) -- ({120*\i-30}:1) -- ({120*\i+90}:1);
      \foreach[count =\j] \i in {1, 2, 3} {
        \node[empty vertex] (u\i) at ({120*\i-30}:2) {};
        \node[filled vertex] (v\i) at ({120*\i-30}:1) {};
        \draw (c) -- (v\i);
      }
    \end{tikzpicture}
    \caption{}
  \end{subfigure}
  \,
  \begin{subfigure}[b]{0.16\linewidth}
    \centering
    \begin{tikzpicture}[scale=.8]
      \node[filled vertex] (c) at (0, -0.4) {};

      \def\n{4}
      \def\radius{1.5}
      
      \foreach \i in {1, ..., \n}
      \draw ({(\i - .5) * (360 / \n)}:1) -- ({(\i + .5) * (360 / \n)}:1) -- ({(\i) * (360 / \n)}:1.25) -- ({(\i - .5) * (360 / \n)}:1);
      \foreach \i in {1, ..., \n} {
        \node[empty vertex] (u\i) at ({(\i) * (360 / \n)}:1.25) {};
        \node[filled vertex] (v\i) at ({(\i - .5) * (360 / \n)}:1) {};
        \draw (c) -- (v\i);
      }
      \draw (v1) -- (v3) (v2) -- (v4);
    \end{tikzpicture}
    \caption{}
  \end{subfigure}
  \quad
  \begin{subfigure}[b]{0.16\linewidth}
    \centering
    \begin{tikzpicture}[scale=.45]
      \node[filled vertex] (c) at (0, 0) {};

      \def\n{5}
      \def\radius{1.5}      
      \foreach \i in {1,..., \n} {
        \pgfmathsetmacro{\angle}{90 - (\i) * (360 / \n)}
        \foreach \j in {-1, 0, 1}
        \draw (\angle:{\radius*1.6}) -- ({90 - (\i + \j) * (360 / \n)}:{\radius});
      }
      \foreach \i[evaluate={\p=int(\i - 1);}] in {1,..., \n} {
        \pgfmathsetmacro{\angle}{90 - (\i) * (360 / \n)}
        \node[filled vertex] (v\i) at (\angle:\radius) {};
        \node[empty vertex] (u\i) at (\angle:{\radius*1.6}) {};
        \foreach \j in {1, ..., \p} {
          \ifthenelse{\i>1}{\draw (v\i) -- (v\j);}{};
        }
        \draw (c) -- (v\i);
      }
    \end{tikzpicture}
    \caption{}
  \end{subfigure}
  \caption{The graphs~$\overline{S_{3}^{+}}$,~$k = 3, 4, 5$.}
  \label{fig:sun-complement+}
\end{figure}

\begin{figure}[ht]
  \centering \small
  \tikzstyle{every node}=[empty vertex]
  \begin{subfigure}[b]{0.15\linewidth}
    \centering
    \begin{tikzpicture}[scale=.8]
      \draw (-.5, 0) node[filled vertex] (c1) {} -- (.5, 0) node[filled vertex] (c2) {} -- (0, .25) node["$c$"] {} -- (c1);
      \draw (c1) -- (-1, 1) -- (1, 1) -- (c2) (c1) -- (-1, -1) -- (1, -1) -- (c2);
      \foreach \i in {1, 2, 3} {
        \node (u\i) at ({\i-2}, 1) {};
p      }
      \foreach \i in {1, 2, 3} {
        \node (v\i) at ({\i-2}, -1) {};
      }
      \foreach \v/\l in {u/{2}, v/{2}} {
        \foreach \i in \l 
          \draw (c1) -- (\v\i) -- (c2);
      }
    \end{tikzpicture}
    \caption{$p = q = 1$}
    \label{fig:double-sun}
  \end{subfigure}
  \,
  \begin{subfigure}[b]{0.18\linewidth}
    \centering
    \begin{tikzpicture}[scale=.8]
      \draw (-.5, 0) node[filled vertex] (c1) {} -- (.5, 0) node[filled vertex] (c2) {} -- (0, .25) node["$c$"] {} -- (c1);
      \draw (c1) -- (-1, 1) -- (1, 1) -- (c2) (c1) -- (-1.5, -1) -- (1.5, -1) -- (c2);
      \foreach \i in {1, 2, 3} {
        \node (u\i) at ({\i-2}, 1) {};
p      }
      \foreach \i in {1, 2, 3, 4} {
        \node (v\i) at ({\i-2.5}, -1) {};
      }
      \foreach \v/\l in {u/{2}, v/{2, 3}} {
        \foreach \i in \l 
          \draw (c1) -- (\v\i) -- (c2);
      }
    \end{tikzpicture}
    \caption{$p = 1, q = 2$}
  \end{subfigure}
  \,
  \begin{subfigure}[b]{0.22\linewidth}
    \centering
    \begin{tikzpicture}[scale=.8]
      \draw (-.5, 0) node[filled vertex] (c1) {} -- (.5, 0) node[filled vertex] (c2) {} -- (0, .25) node["$c$"] {} -- (c1);
      \draw (c1) -- (-1, 1) -- (1, 1) -- (c2) (c1) -- (-2, -1) -- (2, -1) -- (c2);
      \foreach \i in {1, 2, 3} {
        \node (u\i) at ({\i-2}, 1) {};
      }
      \foreach \i in {1, 2, 3, 4, 5} {
        \node (v\i) at ({\i-3}, -1) {};
      }
      \foreach \v/\l in {u/{2}, v/{2, 3, 4}} {
        \foreach \i in \l 
          \draw (c1) -- (\v\i) -- (c2);
      }
    \end{tikzpicture}
    \caption{$p = 1, q = 3$}
    \label{fig:o(1,1,1,3)}
  \end{subfigure}
  \,
  \begin{subfigure}[b]{0.18\linewidth}
    \centering
    \begin{tikzpicture}[scale=.8]
      \draw (-.5, 0) node[filled vertex] (c1) {} -- (.5, 0) node[filled vertex] (c2) {} -- (0, .15) node["$c$" below] {} -- (c1);
      \draw (c1) -- (-1.5, 1) -- (1.5, 1) -- (c2) (c1) -- (-1.5, -1) -- (1.5, -1) -- (c2);
      \foreach \i in {1, 2, 3,4} {
        \node (u\i) at ({\i-2.5}, 1) {};
      }
      \foreach \i in {1, 2, 3, 4} {
        \node (v\i) at ({\i-2.5}, -1) {};
      }
      \foreach \v/\l in {u/{2, 3}, v/{2, 3}} {
        \foreach \i in \l 
          \draw (c1) -- (\v\i) -- (c2);
      }
    \end{tikzpicture}
    \caption{$p = q = 2$}
  \end{subfigure}
  \caption{Graphs~$\otimes(1, p, 1, q)$.  Note that a gadget consists of a solid node along with the two nodes at the opposite side's corners (specifically, the solid node on the left is with the nodes at the top right and bottom right corners).
}
  \label{fig:four-part}
\end{figure}

\begin{figure}[ht]
  \centering \small
  \begin{subfigure}[b]{0.2\linewidth}
    \centering
    \begin{tikzpicture}[scale=.8]
      \filldraw[draw = gray, fill=gray!30] (0, 1.6) ellipse [x radius=1.8, y radius=0.75];
      \node[empty vertex, "$c$"](a) at (0, 3) {};
      \draw (-1.4, 1.5) --  (-.9, 0) -- (0, 0) (0, 0) edge[dashed] (.9, 0) (.9, 0) -- (1.4, 1.5);
      \draw (a) -- (0, 1.5) node[filled vertex, label={[xshift={0.3cm}, yshift=-0.3cm]$v_{1}$}] (b) {};
      \foreach \x in {-1, 0, 1} 
      \draw (b) -- (0.9*\x, 0);
      \foreach[count=\i] \x/\sub in {-1/1, 1/2} {
        \node[empty vertex](v\i 1) at ({0.9*\x}, 0) {};
        \node[empty vertex](v\i 2) at ({1.4*\x}, 1.5) {};
      }
\node[empty vertex] (v0) at (0, 0) {};
      \foreach \i in {1, 2} \draw (v0) -- (b);      
    \end{tikzpicture}
    \caption{$p = 1$}
  \end{subfigure}
  \,
  \begin{subfigure}[b]{0.22\linewidth}
    \centering
    \begin{tikzpicture}[scale=.8]
      \filldraw[draw = gray, fill=gray!30] (0, 1.6) ellipse [x radius=2.1, y radius=0.75];
      \node[empty vertex] at (90:2.1) {};

      \node[empty vertex, "$c$"](a) at (0, 3) {};
      \draw (-1.8, 1.5) --  (-.9, 0) -- (0, 0) (0, 0) edge[dashed] (.9, 0) (.9, 0) -- (1.8, 1.5);
      \foreach[count=\i] \x/\sub in {-1/1, 1/2} {
        \node[filled vertex, "$v_{\sub}$", rotate = {\x*-45}] 
        (b\i) at ({0.9*\x}, 1.5) {};
        \draw (a) -- (b\i);
        \node[empty vertex](v\i 1) at ({0.9*\x}, 0) {};
        \node[empty vertex](v\i 2) at ({1.8*\x}, 1.5) {};
        \foreach\j in {1, 2} {
          \draw (b\i) -- (v\i\j);
        }
      }
      \draw (b1) -- (b2);
      \draw (b1) -- (v21) (b2) -- (v11);
      \node[empty vertex] (v0) at (0, 0) {};
      \foreach \i in {1, 2} \draw (v0) -- (b\i);      
    \end{tikzpicture}
    \caption{$p = 2$}
  \end{subfigure}
  \,
  \begin{subfigure}[b]{0.22\linewidth}
    \centering
    \begin{tikzpicture}[scale=.8]
      \filldraw[draw = gray, fill=gray!30] (0, 1.6) ellipse [x radius=2.1, y radius=0.95];

      \node[empty vertex, "$c$"] (a3) at (0, 3) {};
      \draw (-1.8, 1.5) --  (-.9, 0) -- (0, 0) (0, 0) edge[dashed] (.9, 0) (.9, 0) -- (1.8, 1.5);
      \draw (a3) -- (0, 1) node[filled vertex] (c) {};
      \foreach[count=\i] \x/\sub in {-1/1, 1/3} {
        \node[filled vertex, "$v_{\sub}$", rotate = {\x*-45}] (b\i) at ({0.9*\x}, 1.5) {};
        \draw (b\i) --  ({0.9*\x}, 2.2) node[empty vertex] {};
        \draw (a3) -- (b\i) -- (c);
        \node[empty vertex](v\i 1) at ({0.9*\x}, 0) {};
        \node[empty vertex](v\i 2) at ({1.8*\x}, 1.5) {};
        \foreach\j in {1, 2} {
          \draw (b\i) -- (v\i\j) -- (c);
        }
      }
      \draw (b1) -- (b2);
      \draw (b1) -- (v21) (b2) -- (v11);
      \node[empty vertex] (v0) at (0, 0) {};
      \foreach \v in {b1, b2, c} \draw (v0) -- (\v);      
    \end{tikzpicture}
    \caption{$p = 3$}
  \end{subfigure}
  \,
  \begin{subfigure}[b]{0.27\linewidth}
    \centering
    \begin{tikzpicture}[yscale=.8]
      \filldraw[draw = gray, fill=gray!30] (0, 1.65) ellipse [x radius=2.1, y radius=1.05];

      \node[empty vertex, "$c$"](a3) at (0, 3) {};
      \draw (-1.8, 1.5) --  (-.9, 0) -- (0, 0) (0, 0) edge[dashed] (.9, 0) (.9, 0) -- (1.8, 1.5);
      \foreach[count=\i] \x/\sub in {-1/1, 1/4} {
        \node[filled vertex, "$v_{\sub}$", rotate = {\x*-45}] 
        (b\i) at ({0.9*\x}, 1.5) {};
        \draw (a3) -- (b\i);
        \node[empty vertex](c\i) at ({0.9*\x}, 0) {};
        \node[empty vertex](a\i) at ({1.8*\x}, 1.5) {};
        \foreach\l in {c, a} {
          \draw (b\i) -- (\l\i);
        }
      }
      \draw (b1) -- (b2);
      \draw (b1) -- (c2) (b2) -- (c1);

    \foreach[count=\i] \x in {-1, 1} {
      \node[filled vertex](x\i) at ({\x/3}, .8) {};
      \foreach \k in {1, 2, 3}  \draw (a\k) -- (x\i);
      \foreach \k in {1, 2}  \draw (b\k) -- (x\i) -- (c\k);
    }
    \draw (x1) -- (x2);
    \foreach \i/\x in {1/-1, 2/1} {
      \draw (b\i) -- ({0.9*\x}, 2.2) node[empty vertex] {} -- (x\i);
    }
    \draw (b1) -- (90:2.2) node[empty vertex] {} -- (b2);
      \node[empty vertex] (v0) at (0, 0) {};
      \foreach \v in {b1, b2, x1, x2} \draw (v0) -- (\v);      
    \end{tikzpicture}
    \caption{$p = 4$}
  \end{subfigure}
  \caption{Graphs~$\otimes(p, q)$ with~$p + q \ge 4$.
    Inside the shadowed ellipse is the gadget~$D_{p}$.
    At the bottom is a path on~$q$ vertices.
}
  \label{fig:Spq}
\end{figure}

\begin{figure}[ht]
  \centering \small
  \begin{subfigure}[b]{0.3\linewidth}
    \centering
    \begin{tikzpicture}\def\n{3}
      \node[filled vertex] (c) at (0, 0) {};
      \foreach \i in {1, ..., \n} 
        \foreach[count=\j] \x in {-1, 1} 
          \draw ({360/\n*\i-90}:1.5) node[filled vertex] {} -- ({360/\n*\i-90+\x*50}:1.5);
      \foreach \i in {1, 2, 3} {
        \foreach[count=\j] \x in {-1, 1} {
          \node[filled vertex](u\i\j) at ({360/\n*\i - 30+\x*10}:{1.5}) {};
          \draw ({360/\n*\i-90}:1) -- (u\i\j) -- ({360/\n*\i+30}:1);
}
        \draw ({360/\n*\i-90}:1) -- ({360/\n*\i+30}:1);
        \node[filled vertex] at ({360/\n*\i-90}:1.5) (u\i) {};
      }

      \foreach \i in {1, 2, 3} {
        \node[filled vertex](v\i) at ({360/\n*\i-90}:1) {};
        \draw (v\i) -- (c);
        \foreach \j in {1, 2, 3} 
          \draw (v\i) -- (u\j);
      }
    \end{tikzpicture}
    \caption{}
  \end{subfigure}
  \,
  \begin{subfigure}[b]{0.3\linewidth}
    \centering
    \begin{tikzpicture}\def\n{4}
      \node[filled vertex] (c) at ({45}:.25) {};
      \foreach \i in {1, ..., \n} 
        \foreach[count=\j] \x in {-1, 1} {
          \draw ({360/\n*\i-45}:1.5) -- ({360/\n*\i-45+\x*35}:1.5);
        }
      \foreach \i in {1, ..., \n} {
        \foreach[count=\j] \x in {-1, 1} {
          \node[empty vertex](u\i\j) at ({360/\n*\i+\x*10}:{1.5}) {};
          \draw ({360/\n*\i-90}:1) -- (u\i\j) -- ({360/\n*\i}:1) (u\i\j) -- ({360/\n*\i + 90}:1);
        }
        \node[filled vertex] at ({360/\n*\i-45}:1.5) (u\i) {};
      }

      \foreach \i in {1, ..., \n} {
        \node[filled vertex](v\i) at ({360/\n*\i-90}:1) {};
        \draw (v\i) -- (c);
        \foreach \j in {1, ..., \n} 
        \draw (v\i) -- (u\j);
      }
      \foreach \i in {2, ..., \n} {
        \foreach \j in {1,..., \the\numexpr\i-1\relax}
        \draw (v\i) -- (v\j);
      }
        
    \end{tikzpicture}
    \caption{}
  \end{subfigure}

\caption{(a)~$\otimes(1, 1, 1, 1, 1, 1)$ and (b)~$\otimes(1, 1, 1, 1, 1, 1, 1, 1)$.
}
  \label{fig:split-non-helly-2}
\end{figure}

\subsection{Proof of Proposition~\ref{lem:o-graphs}}

This section is devoted to showing that every~$\otimes$ graph is a minimal chordal forbidden induced subgraph of (Helly) circular-arc graphs.
The main difficulty is on the minimality, i.e., to show that every proper induced subgraph of an~$\otimes$ graph is a Helly circular-arc graph.

Recall that~$\otimes$ graphs are graphs of the form~$\otimes(a_{0}, a_{1}, \ldots, a_{2 p - 1})$ with~$(a_{0}, a_{1}, \ldots, a_{2 p - 1})\not\in \{(1, 1), (1, 2)\}$.
Let~$G$ be an~$\otimes$ graph.
It comprises~$p$ gadgets,$D_{a_{2 i}}, i = 0, \ldots, p-1$,~$p$ paths, of lengths $a_{2i+1}, i = 0, \ldots, p-1$, and the special vertex~$c$.
For~$i = 0, \ldots, p - 1$, let
\[
  \{v^{2i}_{1}, v^{2i}_{2}, \ldots, v^{2i}_{a_{2i}}\}\uplus \{w^{2i}_{0}, w^{2i}_{1}, \ldots, w^{2i}_{a_{2i}}\}
\]
be the split partition of the gadget corresponding to~$a_{2 i}$ (see Figure~\ref{fig:gadget}), 
and let the path corresponding to~$a_{2 i+1}$ be
\[
  v^{2i+1}_{1}  v^{2i+1}_{2} \cdots v^{2i+1}_{a_{2i+1}}.
\]
By definition,
\[
  N(c) = \bigcup^{p-1}_{i=0}  \{v^{2i}_{1}, v^{2i}_{2}, \ldots, v^{2i}_{a_{2i}}\},
\]
and for~$i = 0, 1, \ldots, p - 1$,  \[
  N(w^{2i}_{j}) =
  \begin{cases}
    \left( N(c) \setminus \{v^{2i}_{1}\}\right) \cup \{v^{2i-1\ (\mathrm{mod}\ 2p)}_{a_{2i-1\ (\mathrm{mod}\ 2p)}}\} & \text{ if } j = 0,
    \\
    N(c) \setminus \{v^{2i}_{j-1}, v^{2i}_{j}\} & \text{ if } 1 \le j < a_{2i-1},
    \\
    \left( N(c) \setminus \{v^{2i}_{a_{2i-1}}\}\right) \cup \{v^{2i+1}_{a_{2 i + 1}}\} & \text{ if } j = a_{2i-1}.
  \end{cases}
\]
See Figure~\ref{fig:o(1,1,1,3)-labeled} for an example.

\begin{figure}[ht]
  \centering \small
  \begin{subfigure}[b]{0.3\linewidth}
    \centering
    \tikzstyle{every node}=[empty vertex]
    \begin{tikzpicture}[xscale=.8]
      \draw (-.5, 0) node[filled vertex, "$v^{0}_{1}$" left] (c1) {} -- (.5, 0) node[filled vertex, "$v^{2}_{1}$" right] (c2) {} -- (0, .25) node["$c$"] {} -- (c1);
      \draw (c1) -- (-1, 1) -- (1, 1) -- (c2) (c1) -- (-2, -1) -- (2, -1) -- (c2);
      \foreach \i/\l in {1/w^{2}_{1}, 2/v^{1}_{1}, 3/w^{0}_{2}} {
        \node["$\l$" above] (u\i) at ({\i-2}, 1) {};
      }
      \foreach \i in {2, 3, 4} {
        \node["$v^{3}_{\the\numexpr5-\i\relax}$" below] (v\i) at ({\i-3}, -1) {};
      }
      \foreach \i/\l in {1/w^{2}_{2}, 5/w^{0}_{1}} {
        \node["$\l$" below] (v\i) at ({\i-3}, -1) {};
      }      
      \foreach \v/\l in {u/{2}, v/{2, 3, 4}} {
        \foreach \i in \l 
          \draw (c1) -- (\v\i) -- (c2);
      }
    \end{tikzpicture}
    \caption{}
    \label{fig:o(1,1,1,3)-labeled}
  \end{subfigure}
  \begin{subfigure}[b]{0.3\linewidth}
    \centering
    \begin{tikzpicture}[scale=.8]
      \def\n{6}
      \def\radius{1.5}
      \node[a-vertex, "$c$"] (c) at (0, 0) {};
      \coordinate (v0) at ({180 + 180 / \n}:\radius) {};
      \foreach \i in {0, ..., 3} {
        \pgfmathsetmacro{\angle}{(3 - \i) * (360 / \n)}
        \node[b-vertex] (w\i) at (\angle:{\radius*1.1}) {};
        \node at (\angle:{\radius*1.1+.5}) {$w_\i$};
        \draw ({(2.5 - \i) * (360 / \n)}:\radius) -- (w\i) -- ({(3.5 - \i) * (360 / \n)}:\radius);
}
      \foreach \i in {1,..., \n} {
        \pgfmathsetmacro{\angle}{(3.5 - \i) * (360 / \n)}
        \coordinate (v\i) at (\angle:\radius) {};
        \draw let \n1 = {int(\i - 1)} in (v\n1) -- (v\i);
        \node at (\angle:{\radius+.4}) {$v_\i$};
}
      \foreach \i in {1, 3}
      \node[a-vertex] at (v\i) {};
      \foreach \i in {2, 4, 5, 6}
      \node[b-vertex] at (v\i) {};
      \foreach \i in {-1, 1}
      \foreach \j in {-1, 1}
        \draw (c) -- ++(\i/4, \j/4);      
    \end{tikzpicture}
    \caption{}
    \label{fig:o(1,1,1,3)-c}
  \end{subfigure}
  \begin{subfigure}[b]{0.3\linewidth}
    \centering
    \begin{tikzpicture}[scale=.14]
      \draw[dashed,thin] (10,0) arc (0:360:10);
      \pgfmathsetmacro{\outer}{13}
      \pgfmathsetmacro{\inner}{12}
      
      \pgfmathsetmacro{\n}{6}
      \foreach \i in {1, ..., \n} {
        \pgfmathsetmacro{\radius}{\inner + Mod(\i, 2)}
        \pgfmathsetmacro{\span}{360 * 1.5 / \n}
        \pgfmathsetmacro{\start}{90 - 360/\n*(\i-2.75)}
        \draw[{|[left]}-{|[right]}, thick]  (\start:\radius) arc (\start:{\start-\span}:\radius); 
        \draw[dashed] (\start:\radius) -- (\start:\inner-2);
        \node at (\start:{\inner-3}) {$v_{\i}$};
      }
      \foreach \i in {0, ..., 3} {
        \pgfmathsetmacro{\radius}{\inner + 2}
        \pgfmathsetmacro{\span}{6}
        \pgfmathsetmacro{\start}{90 - 360/\n*(\i - 1.5) + \span / 2}
        \draw[{|[left]}-{|[right]}]  (\start:\radius) node[left] {$w_{\i}$} arc (\start:{\start-\span}:\radius); 
}
    \end{tikzpicture}
    \caption{} \label{fig:o(1,1,1,3)-model}
  \end{subfigure}

  \begin{subfigure}[b]{0.3\linewidth}
    \centering
    \begin{tikzpicture}[scale=.13]
      \draw[dashed,thin] (10,0) arc (0:360:10);
      \pgfmathsetmacro{\outer}{13}
      \pgfmathsetmacro{\inner}{12}
      
      \pgfmathsetmacro{\n}{6}
      \foreach \i in {2, 4, 5, 6} {
        \pgfmathsetmacro{\radius}{\inner + Mod(\i, 2)}
        \pgfmathsetmacro{\span}{360 * 1.5 / \n}
        \pgfmathsetmacro{\start}{90 - 360/\n*(\i-2.75)}
        \draw[{|[left]}-{|[right]}, thick]  (\start:\radius) arc (\start:{\start-\span}:\radius); 
        \draw[dashed] (\start:\radius) -- (\start:\inner-2);
        \node at (\start:{\inner-3}) {$v_{\i}$};
      }
      \foreach \i in {1, 3} {
        \pgfmathsetmacro{\radius}{\inner + 3 + Mod(\i, 3)}
        \pgfmathsetmacro{\span}{90}
        \pgfmathsetmacro{\start}{90 - 360/\n*(\i-2.75)}
        \draw[Sepia, dotted] (\start:\radius) arc (\start:{\start-\span}:\radius); 
        \pgfmathsetmacro{\span}{270}
        \pgfmathsetmacro{\start}{ - 360/\n*(\i-2.75)}
        \draw[Sepia, {|[left]}-{|[right]}, thick] (\start:\radius) arc (\start:{\start-\span}:\radius); 
        \draw[dashed] (\start:\radius) -- (\start:\inner-2);
        \node at (\start:{\inner-3}) {$v_{\i}$};
      }
      \foreach \i in {1, ..., 4} {
        \pgfmathsetmacro{\radius}{\inner + 2}
        \pgfmathsetmacro{\span}{6}
        \pgfmathsetmacro{\start}{90 - 360/\n*(\i - 2.5) + \span / 2}
        \draw[{|[left]}-{|[right]}]  (\start:\radius) arc (\start:{\start-\span}:\radius); 
}
    \end{tikzpicture}
    \caption{}
    \label{fig:o(1,1,1,3)-c-model}\end{subfigure}
  \begin{subfigure}[b]{0.3\linewidth}
    \centering
    \begin{tikzpicture}[scale=.13]
      \draw[dashed,thin] (10,0) arc (0:360:10);
      \pgfmathsetmacro{\outer}{13}
      \pgfmathsetmacro{\inner}{12}
      
      \pgfmathsetmacro{\n}{6}
      \foreach \i in {2, 4, 5, 6} {
        \pgfmathsetmacro{\radius}{\inner + Mod(\i, 2)}
        \pgfmathsetmacro{\span}{360 * 1.5 / \n}
        \pgfmathsetmacro{\start}{90 - 360/\n*(\i-2.75)}
        \draw[{|[left]}-{|[right]}, thick]  (\start:\radius) arc (\start:{\start-\span}:\radius); 
        \draw[dashed] (\start:\radius) -- (\start:\inner-2);
        \node at (\start:{\inner-3}) {$v_{\i}$};
      }
      \foreach \i in {1} {
        \pgfmathsetmacro{\start}{360/8 - 360/\n*(\i-2.75)}

        \pgfmathsetmacro{\radius}{\inner + 2}
        \pgfmathsetmacro{\span}{6}
        \pgfmathsetmacro{\start}{\start + \span / 2}
        \draw[thick, Cyan, {|[left]}-{|[right]}]  (\start:\radius) node[below] {$c$} arc (\start:{\start-\span}:\radius); 
      }
      \foreach \i in {3} {
        \pgfmathsetmacro{\radius}{\inner + 3 + Mod(\i, 3)}
        \pgfmathsetmacro{\span}{90}
        \pgfmathsetmacro{\start}{90 - 360/\n*(\i-2.75)}
        \draw[Sepia, dotted] (\start:\radius) arc (\start:{\start-\span}:\radius); 
        \pgfmathsetmacro{\span}{270}
        \pgfmathsetmacro{\start}{ - 360/\n*(\i-2.75)}
        \draw[Sepia, {|[left]}-{|[right]}, thick] (\start:\radius) arc (\start:{\start-\span}:\radius); 
        \draw[dashed] (\start:\radius) -- (\start:\inner-2);
        \node at (\start:{\inner-3}) {$v_{\i}$};
      }
      \foreach \i in {1, ..., 4} {
        \pgfmathsetmacro{\radius}{\inner + 2}
        \pgfmathsetmacro{\span}{6}
        \pgfmathsetmacro{\start}{90 - 360/\n*(\i - 2.5) + \span / 2}
        \draw[{|[left]}-{|[right]}]  (\start:\radius) arc (\start:{\start-\span}:\radius); 
}
    \end{tikzpicture}
    \caption{} \label{fig:o(1,1,1,3)-v1-model}
  \end{subfigure}
  \begin{subfigure}[b]{0.3\linewidth}
    \centering
    \begin{tikzpicture}[scale=.13]
      \draw[dashed,thin] (10,0) arc (0:360:10);
      \pgfmathsetmacro{\outer}{13}
      \pgfmathsetmacro{\inner}{12}
      
      \pgfmathsetmacro{\n}{6}
      \foreach \i in {2, 4, 5, 6} {
        \pgfmathsetmacro{\radius}{\inner + Mod(\i, 2)}
        \pgfmathsetmacro{\span}{360 * 1.5 / \n}
        \pgfmathsetmacro{\start}{90 - 360/\n*(\i-2.75)}
        \draw[{|[left]}-{|[right]}, thick]  (\start:\radius) arc (\start:{\start-\span}:\radius); 
        \draw[dashed] (\start:\radius) -- (\start:\inner-2);
        \node at (\start:{\inner-3}) {$v_{\i}$};
      }
      \foreach \i in {1} {
        \pgfmathsetmacro{\radius}{\inner + 3 + Mod(\i, 3)}
        \pgfmathsetmacro{\span}{90-360/8}
        \pgfmathsetmacro{\start}{90 - 360/\n*(\i-2.75) + 360/8}
        \draw[Sepia, dotted] (\start:\radius) arc (\start:{\start-\span}:\radius); 
        \pgfmathsetmacro{\span}{270+360/8}
        \pgfmathsetmacro{\start}{360/8 - 360/\n*(\i-2.75)}
        \draw[Sepia, {|[left]}-{|[right]}, thick] (\start:\radius) arc (\start:{\start-\span}:\radius); 
        \draw[dashed] (\start:\radius) -- (\start:\inner-2);
        \node at (\start:{\inner-3}) {$v_{\i}$};

        \pgfmathsetmacro{\radius}{\inner + 2}
        \pgfmathsetmacro{\span}{6}
        \pgfmathsetmacro{\start}{\start + \span / 2}
        \draw[thick, Cyan, {|[left]}-{|[right]}]  (\start:\radius) node[below] {$c$} arc (\start:{\start-\span}:\radius); 
      }
      \foreach \i in {3} {
        \pgfmathsetmacro{\radius}{\inner + 3 + Mod(\i, 3)}
        \pgfmathsetmacro{\span}{90}
        \pgfmathsetmacro{\start}{90 - 360/\n*(\i-2.75)}
        \draw[Sepia, dotted] (\start:\radius) arc (\start:{\start-\span}:\radius); 
        \pgfmathsetmacro{\span}{270}
        \pgfmathsetmacro{\start}{ - 360/\n*(\i-2.75)}
        \draw[Sepia, {|[left]}-{|[right]}, thick] (\start:\radius) arc (\start:{\start-\span}:\radius); 
        \draw[dashed] (\start:\radius) -- (\start:\inner-2);
        \node at (\start:{\inner-3}) {$v_{\i}$};
      }
      \foreach \i in {1, 3, 4} {
        \pgfmathsetmacro{\radius}{\inner + 2}
        \pgfmathsetmacro{\span}{6}
        \pgfmathsetmacro{\start}{90 - 360/\n*(\i - 2.5) + \span / 2}
        \draw[{|[left]}-{|[right]}]  (\start:\radius) arc (\start:{\start-\span}:\radius); 
}

    \end{tikzpicture}
    \caption{} \label{fig:o(1,1,1,3)-w1-model}
  \end{subfigure}
  \caption{Illustration for the proof of Proposition~\ref{lem:o-graphs}.
    (a)~$G = \otimes(1, 1, 1, 3)$; (b)~$G^{c}$, where~$c$ is universal; (c)~a circular-arc model for~$G^{c} - c$; (d)~a circular-arc model for~$G - c$; (e)~a circular-arc model for~$G - v_{1}$; (f)~a circular-arc model for~$G - w_{1}$.
    Note that~$v_{1} = v^{0}_{1}, v_{2} = v^{1}_{1}, v_{3} = v^{2}_{1}, v_{4} = v^{3}_{1}, v_{5} = v^{3}_{2}, v_{6} = v^{3}_{3}, w_{0} = w^{0}_{1}, w_{1} = w^{0}_{2}, w_{2} = w^{2}_{1}, w_{3} = w^{2}_{2}$.
}
  \label{fig:otimes-models}
\end{figure}
It is easy to verify that the following is a hole of~$G^{c}$:
\[
  v^{0}_{1} v^{0}_{2} \cdots v^{0}_{a_{0}}
  v^{1}_{1} v^{1}_{2} \cdots v^{1}_{a_{1}}
  \cdots
  v^{2p-1}_{1}, v^{2p-1}_{2}, \ldots, v^{2p-1}_{a_{2p-1}}.
\]
For~$i = 0, 1, \ldots, 2p-1$ and~$j = 1, 2, \ldots, a_{i}$, we set
\[
  v_{\sum_{k = 0}^{i} a_{k} + j} = v^{i}_{j},
\]
and for~$i = 0, 2, \ldots, 2p-2$ and~$j = 1, 2, \ldots, a_{i}+1$, we set
\[
  w_{\sum_{k = 0}^{i} a_{k} + j} = w^{i}_{j}.
\]
See Figure~\ref{fig:o(1,1,1,3)-c} for an example.
For the convenience of indices arithmetics, we let~$v_{0} = v_{\ell}$,
where
\[
  \ell = \sum_{i=0}^{2 p - 1} {a_{i}}.
\]

\begin{remark}
  In the graph~$G^{c}$, there is an induced cycle~$v_{1} v_{2} \cdots v_{\ell}$, and for~$i = 0, \ldots, \ell - 1$, if~$w_{i}$ exists, then its neighborhood is~$\{c, v_{i}, v_{i+1}\}$.
\end{remark}

Thus,~$G^{c}$ is a circular-arc graph.  We build a circular-arc model~$\mathcal{A}'$ for~$G^{c} - c$ on a circle of length~$4 \ell$ (as illustrated in Figure~\ref{fig:o(1,1,1,3)-model}).
For~$i = 0, \ldots, \ell - 1$, we assign the arc
\begin{align*}
  A'(v_{i})& = [4 i - 3, 4 i + 3],
  \\
  A'(w_{i})& = [4 i + 2, 4 i + 2].
\end{align*}

We are ready for the Proof of Proposition~\ref{lem:o-graphs}.
For each vertex~$x\in V(G)$, we transform~$\mathcal{A}'$ into a circular-arc model of~$G - x$ in a similar way as \cite[Theorem 1.3]{cao-24-split-cag}.  

\begin{proof}[Proof of Proposition~\ref{lem:o-graphs}]
  Let~$G = \otimes(a_{0}, a_{1}, \ldots, a_{2 p - 1})$, and let~$\ell = \sum_{i=0}^{2 p - 1} {a_{i}}$.
  To show that~$G$ is chordal, we give a elimination sequence of simplicial vertices.
  The set~$N(c)$ comprises all clique vertices of the~$p$ gadgets and is a clique.
  All vertices in~$N(c)$ are adjacent to all the vertices on the paths.
  Thus,~$c$ and all the empty vertices in the gadgets are simplicial in~$G$.
  After their removal, the ends of each path are simplicial, and we can proceed in order.
  We are left with the clique~$N(c)$ after all the path vertices removed.
  
  The only~$\otimes$ graph with~$\ell < 4$ is~$\otimes(2, 1)$, i.e., sun$^\star$.
  It is easy to verify that sun$^\star$ is a minimal chordal forbidden induced subgraph of circular-arc graphs, and every proper induced subgraph of sun$^\star$ is a Helly circular-arc graph.
  In the rest, we assume that~$\ell \ge 4$.
  Since~$G^{c}$ contains a hole, the graph~$G$ itself is not a circular-arc graph by Theorem~\ref{thm:forbidden-configurations}.

  For each vertex~$x\in V(G)$, we show that~$G - x$ is a Helly circular-arc graph by constructing a Helly circular-arc model for~$G - x$ on a circle of length~$4 \ell$.
  In all these models, if the vertex~$w_{i}$ is present, we use the same arc as~$A'(w_{i})$, i.e., 
  \[
    A(w_{i}) = A'(w_{i}) = [4 i + 2, 4 i + 2].
  \]

  The circular-arc model for~$G - c$ is produced by flipping all arcs in~$\mathcal{A}'$ for vertices in~$N_{G}(c)$:
  \begin{equation}
    \label{eq:1}
    A(v_{i}) = \begin{cases}
      [4 i + 3, 4 i - 3] & v_{i}\in N_{G}(c) \\
      [4 i - 3, 4 i + 3] & v_{i}\not\in N_{G}(c).
    \end{cases}
  \end{equation}
See Figure~\ref{fig:o(1,1,1,3)-c-model} for an illustration.
  Note that the length of each arc for a vertex in~$N_{G}(c)$  is~$4 \ell - 6$, and the length of other arcs is either~$6$ or~$0$.
  Arcs for~$N_{G}(c)$ intersect pairwise because their lengths are~$4 \ell - 6$.
  For~$i = 1, \ldots, p$, arcs for the~$i$th path remains unchanged, and they intersect all the arcs for~$N_{G}(c)$ (note that each has length six).
  Each vertex in~$N_{G}(c)$ has precisely two non-neighbors, both in the same gadget.
Their arcs are~$[4 i - 2, 4 i - 2]$ and~$[4 i + 2, 4 i + 2]$.
  Hence,~$\mathcal{A}$ is a circular-arc model of~$G - c$.
To see that the model constructed above is Helly, note that the maximal cliques of~$G - c$ are~$N_{G}[w_{i}]$, and~$\{v^{2i+1}_{j},  v^{2i+1}_{j+1}\}\cup N_{G}(c)$, where~$v^{2i+1}_{j}  v^{2i+1}_{j+1}$ is an edge of the~$i$th path.  For the clique~$N_{G}[w_{i}]$, it follows from that~$A(w_{i})$ has zero length.
  For the second type, suppose that~$v_{k} = v^{2i+1}_{j}$.  Note that for every~$v\in N_{G}(c)$,
  \[
    A(v_{k}) \cap A(v_{k+1}) = [4 k + 1, 4 k + 3]\subseteq A(v).
  \]

  In the rest, the argument for the correctness of the constructed model is similar to the above and hence omitted.
  
  For the subgraph~$G - v_{i}, i = 1, \ldots, \ell$, we use \eqref{eq:1} for vertices in~$\{v_{1}, \ldots, v_{i - 1}\}\cup \{v_{i + 1}, \ldots, v_{\ell}\}$, and set
  \[
    A(c) = [4 i, 4 i].
  \]
  See Figure~\ref{fig:o(1,1,1,3)-v1-model} for an example, where~$i = 1$.

  Now consider the subgraph~$G - w_{i}, i = 1, \ldots, \ell$, and~$w_{i}$ exists.  We use \eqref{eq:1} for vertices in~$\{v_{1}, \ldots, v_{i - 1}\}\cup \{v_{i + 2}, \ldots, v_{\ell}\}$.
  If both~$v_{i}$ and~$v_{i+1}$ are in~$N_{G}(c)$, then we set
  \begin{align*}
    A(v_{i}) &= [4 i + 1, 4 i - 3]\\
    A(v_{i+1}) &= [4 i + 7, 4 i + 3],\\
    A(c) &= [4 i + 2, 4 i + 2].
  \end{align*}
  If~$v_{i}$ is in~$N_{G}(c)$ and~$v_{i+1}$ is not, then we set
  \begin{align*}
    A(v_{i}) &= [4 i, 4 i - 3]\\
    A(v_{i+1}) &= [4 i + 1, 4 i + 7],\\
    A(c) &= [4 i, 4 i].
  \end{align*}
  See Figure~\ref{fig:o(1,1,1,3)-w1-model} for an example.
  It is symmetric if~$v_{i+1}$ is in~$N_{G}(c)$ and~$v_{i}$ is not.
\end{proof}

\bibliographystyle{plainurl}
\bibliography{references}

\end{document}